\newtheorem{thm}{Theorem}[section]
\newtheorem{lem}[thm]{Lemma}
\newtheorem{cor}[thm]{Corollary}
\newtheorem{prop}[thm]{Proposition}
\newtheorem{conj}[thm]{Conjecture}
\theoremstyle{definition}
\newtheorem{eg}[thm]{Example}
\newtheorem*{conventions}{Conventions}
\newtheorem*{notation}{Notation}
\newtheorem*{acknowledgment}{Acknowledgment}
\theoremstyle{remark}
\newtheorem{rem}[thm]{Remark}
\numberwithin{equation}{section}
\newcommand{\bA}{\mathbb{A}}
\newcommand{\bF}{\mathbb{F}}
\newcommand{\bG}{\mathbb{G}}
\newcommand{\bk}{\Bbbk}
\newcommand{\sO}{\mathscr{O}}
\newcommand{\bP}{\mathbb{P}}
\newcommand{\bQ}{\mathbb{Q}}
\newcommand{\bZ}{\mathbb{Z}}
\newcommand{\Amp}{\mathrm{Amp}}
\newcommand{\Ampc}{\mathrm{Amp}^{\mathrm{cyl}}}
\newcommand{\Cl}{\mathrm{Cl}}
\newcommand{\Sing}{\mathrm{Sing}}
\newcommand{\Spec}{\mathrm{Spec}}
\newcommand{\Supp}{\mathrm{Supp}}
\newcommand{\wC}{\widetilde{C}}
\newcommand{\wD}{\widetilde{D}}
\newcommand{\wE}{\widetilde{E}}
\newcommand{\wF}{\widetilde{F}}
\newcommand{\wS}{\widetilde{S}}
\newcommand{\wGamma}{\widetilde{\Gamma}}
\newcommand{\wDelta}{\widetilde{\Delta}}
\newcommand{\hD}{\hat{D}}
\newcommand{\hE}{\hat{E}}
\newcommand{\hF}{\hat{F}}
\newcommand{\hM}{\hat{M}}
\newcommand{\hR}{\hat{R}}
\newcommand{\hS}{\hat{S}}
\newcommand{\hGamma}{\hat{\Gamma}}
\newcommand{\cF}{\check{F}}
\newcommand{\cM}{\check{M}}
\newcommand{\cGamma}{\check{\Gamma}}
\begin{document}

\title{Cylindrical ample divisors on Du Val del Pezzo surfaces}


\author{}
\address{}
\curraddr{}
\email{}
\thanks{}

\author{Masatomo Sawahara}
\address{Faculty of Education, Hirosaki University, Bunkyocho 1, Hirosaki-shi, Aomori 036-8560, JAPAN}
\curraddr{}
\email{sawahara.masatomo@gmail.com}
\thanks{}

\subjclass[2020]{14C20, 14E05, 14J17, 14J26, 14J45, 14R25. }

\keywords{polarized cylinder, rational surface, $\mathbb{P}^1$-fibration, Du Val singularity. }

\date{}

\dedicatory{}

\begin{abstract}
Let $S$ be a del Pezzo surface with at worst Du Val singularities of degree $\ge 3$. 
We then construct an $H$-polar cylinder for any ample $\mathbb{Q}$-divisor $H$ on $S$. 
\end{abstract}

\maketitle
\setcounter{tocdepth}{1}

Throughout this article, all considered varieties are assumed to be algebraic and defined over an algebraically closed field $\bk$ of characteristic $0$. 
\section{Introduction}\label{1}
Let $V$ be a normal projective variety. 
We say that an open subset $U$ of $V$ is called a {\it cylinder} if $U$ is isomorphic to $\bA ^1_{\bk} \times Z$ for some variety $Z$.  
Moreover, letting $H$ be an ample $\bQ$-divisor on $V$, we say that a cylinder $U$ of $V$ is an {\it $H$-polar cylinder} if there exists an effective $\bQ$-divisor $D$ on $V$ such that $D \sim _{\bQ} H$ and $V \backslash \Supp (D) = U$. 
We shall consider what kind of polarized cylinders are contained in normal projective varieties. 
The following result motivates our to study polarized cylinders in normal projective varieties: 
\begin{thm}[{\cite[Theorem 2.1]{KPZ14}}\footnote{See also {\cite[3.1.12. Corollary]{KPZ11}}.}]
Let $V$ be a normal projective variety and let $H$ be an ample divisor on $V$. 
Suppose that the generalized affine cone: 
\begin{align*}
\hat{V} := \Spec \left( \bigoplus _{i \ge 0} H^0(V,\sO _V(iH)) \right)
\end{align*}
is normal. 
Then $\hat{V} $admits an effective $\bG _a$-action if and only if $V$ contains an $H$-polar cylinder. 
\end{thm}
Moreover, the geometry of polarized cylinders in projective varieties can be applied to the criterion of flexibility of affine cones (see, e.g., {\cite{Pre13,PW16,Pre21,HHT22,Won22,HT23,HT,Pre}}) and non-trivial $\bG _a$-actions on the complements of hypersurfaces (see, e.g., {\cite{CDP18,Par22}}). 
See also {\cite{CPPZ21}}. 

In order to state our main result and related previous results, we briefly explain cylindrical ample divisors under consideration in this article. 
Let $\Amp (V)$ be the ample cone of a normal rational projective variety $V$. 
In other words, $\Amp (V) := \{ H \in \Cl (V)_{\bQ} \, |\, \text{$H$ is $\bQ$-ample}\}$. 
Then we shall consider the subset $\Ampc (V)$ of $\Amp (V)$ defined by: 
\begin{align*}
\Ampc (V) := \{ H \in \Amp (V) \, |\, \text{there exists an $H$-polar cylinder on $V$}\}.
\end{align*}
In what follows, we say that $\Ampc (V)$ is the {\it set of cylindrical ample divisors} of $V$. 
\begin{rem}
{\cite{CPW17}} and {\cite{MW18}} say that $\Ampc (V)$ is the {\it cone of cylindrical ample divisors} of $V$. 
However, note that $\Ampc (V)$ is not always endowed with a cone structure\footnote{The author is grateful to Professor Karol Palka who pointed out this remark. }. 
Indeed, if $V$ is a smooth cubic surface, then we know $\Ampc (V) = \Amp (V) \backslash \bQ _{>0} [-K_V]$ by Theorem \ref{CPW17} (3) below; in particular, there exist $H_1,H_2 \in \Ampc (V)$ such that $H_1+H_2 \not\in \Ampc (V)$. 
\end{rem}
In previous works, sets of cylindrical ample divisors of smooth rational surfaces are studied by {\cite{KPZ11,KPZ14,CPW16a,CPW17,MW18,Che21}}. 
In particular, the following results about the configurations of sets of cylindrical ample divisors of smooth del Pezzo surfaces hold: 
\begin{thm}\label{CPW17}
Let $S$ be a smooth del Pezzo surface of degree $d$. Then: 
\begin{enumerate}
\item $-K_S \in \Ampc (S)$ if and only if $d \ge 4$. 
\item If $d \ge 4$, then $\Ampc (S) = \Amp (S)$. 
\item If $d = 3$, then $\Ampc (S) = \Amp (S) \backslash \bQ _{>0} [-K_S]$. 
\end{enumerate}
\end{thm}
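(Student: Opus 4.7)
The theorem has three parts, and I would prove them in the order (1), (2), (3), bootstrapping from the anticanonical case.

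For (1), the ``if'' direction requires building a $(-K_S)$-polar cylinder on $S$ when $d \ge 4$. I would do this case by case using the presentation of $S$ as the blow-up of $\bP ^2$ at $9-d$ general points. Pick a singular anticanonical curve $C \subset \bP ^2$ --- a nodal cubic, or a configuration of three lines, etc.\ --- whose singularities lie on the blown-up points so that its strict transform $\wC$ is an effective anticanonical divisor on $S$; then a ruling of the cylinder $\bP ^2 \setminus C$ lifts to an $\bA ^1$-fibration on $S \setminus \wC$, producing the required cylinder. The ``only if'' direction uses the global log canonical threshold (alpha-invariant) bound $\alpha (S) \ge 1$ for smooth del Pezzo surfaces of degree $d \le 3$: if $U = S \setminus \Supp (D)$ were a cylinder with $D \sim _{\bQ} -K_S$, the $\bA ^1$-fibration on $U$ would extend to a movable pencil on $S$ whose general member $F$ is a rational curve meeting $\Supp (D)$ only at ``infinity''. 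Analyzing the intersection numbers $F \cdot D = F \cdot (-K_S)$ together with $F^2 \le 0$ forces one boundary component to carry coefficient $>1$ in $D$, so $(S,D)$ fails log canonicity, contradicting $\alpha (S) \ge 1$.

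For (2), once a $(-K_S)$-polar cylinder $U = S \setminus \Supp (D_0)$ is given for $d \ge 4$, any ample class $H$ can share the same cylinder: choose a presentation $H \sim _{\bQ} \lambda D_0 + E$ with $\lambda > 0$ and $E$ an effective $\bQ$-divisor supported on $\Supp (D_0)$. The only thing to verify is that the boundary $\Supp (D_0)$ constructed for (1) contains enough components that their classes together with $[-K_S]$ span a subcone of $\Cl (S)_{\bQ}$ covering the entire ample cone. The configurations in the standard constructions (nodal cubic plus exceptional curves, triangle of lines plus exceptional curves, and so on) are rich enough for this to hold in each degree $d \in \{ 4,5,6,7,8,9\}$.

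For (3), on a smooth cubic surface $S \subset \bP ^3$ the exclusion $\bQ _{>0}[-K_S] \cap \Ampc (S) = \emptyset$ is precisely (1). For the converse, given $H \in \Amp (S) \setminus \bQ _{>0}[-K_S]$, there exists a line $\ell \subset S$ with $H \cdot \ell \ne -K_S \cdot \ell$. Use the conic bundle defined by the linear system $|-K_S - \ell |$ (equivalently the pencil of conics residual to $\ell$): removing $\ell$ together with one reducible fiber from $S$ exhibits an $\bA ^1$-fibration, hence a cylinder, and assigning positive rational weights to the boundary curves yields an effective $\bQ$-divisor $\bQ$-linearly equivalent to $H$. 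The main obstacle is this weighting step: I expect it to require a case split according to where $H$ lies among the chambers cut out in $\Amp (S)$ by the hyperplanes $\{ H \cdot \ell = -K_S \cdot \ell \}$ as $\ell$ ranges over the $27$ lines, and a verification that the system of positive weights admits a solution precisely when $H$ does not lie on the excluded ray.
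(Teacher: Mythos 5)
Before comparing, note that the paper itself does not prove Theorem \ref{CPW17}: it is quoted from \cite{CPW17}, with the hardest ingredient (non-existence of anticanonical cylinders on smooth cubic surfaces) coming from \cite{CPW16a}; the only piece re-proved in this article is part (2) for $d \le 8$, as a by-product of \S\S \ref{4-5}. Measured against those proofs, your sketch has gaps that touch the core of the statement. The most serious is the ``only if'' half of (1): your contradiction rests on the claim that $\alpha (S) \ge 1$ for smooth del Pezzo surfaces of degree $d \le 3$, and this is false --- a smooth cubic surface has $\alpha (S) = 2/3$ or $3/4$, and degree-two surfaces have $\alpha (S) \le 5/6$. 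Consequently there do exist effective $\bQ$-divisors $D \sim _{\bQ} -K_S$ with $(S,D)$ non-log-canonical, so producing such a $D$ from a hypothetical cylinder yields no contradiction; the actual argument of \cite{CPW16a} analyses the pencil induced by the $\bA ^1$-fibration (base-point-free case and one-base-point case separately) against the fine geometry of lines and conics through the non-lc point, and occupies an entire paper. The ``if'' half of (1) is also shaky as stated: if the singular points of the cubic $C$ are among the blown-up points, the strict transform is no longer anticanonical; and $\bP ^2 \setminus C$ is not a cylinder for an irreducible cubic, since $\Pic (\bP ^2 \setminus C) \simeq \bZ /3\bZ$ while a cylinder inside a smooth rational surface has trivial Picard group (likewise the complement of a triangle of lines is $\bG _m \times \bG _m$). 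The known constructions use, e.g., a cuspidal cubic together with its cuspidal tangent, and the degree-$4$ case is genuinely delicate, so this step cannot be waved through.

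In (3) the proposed cylinder does not exist: since $(-K_S - \ell )\cdot \ell = 2$, the line $\ell$ is a $2$-section of the conic bundle $|-K_S - \ell |$, so removing $\ell$ and one reducible fiber leaves a fibration whose general fiber is a conic minus two points, i.e.\ a $\bG _m$-fibration, not an $\bA ^1$-fibration. A workable variant removes instead a line $m$ with $(m \cdot \ell ) = 0$ (a genuine section) together with all five reducible fibers --- which is essentially the section-plus-fibers picture this paper exploits via condition $(\ast _n)$ and Lemma \ref{lem(2-3)} --- but then the entire content of (3) is the weighting and chamber analysis that you defer, namely deciding for which $H$ positive coefficients exist and showing only the ray $\bQ _{>0}[-K_S]$ is excluded; that is where the proof of \cite{CPW17} (and the analogue of Lemmas \ref{lem(4-1-1)}--\ref{lem(4-1-2)} here) actually lives. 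Part (2) is the right strategy in outline and is close in spirit both to \cite{CPW17} and to the re-proof in \S\S \ref{4-5} (cover the ample cone by cones spanned by boundary components of explicit cylinders), but the decisive verification that the boundary classes span enough of $\Cl (S)_{\bQ}$ in every degree is asserted rather than proved, and it is false for a single fixed cylinder in general, so one must vary the construction with $H$.
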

By considering Theorem \ref{CPW17}, Cheltsov, Park and Won establish the following conjecture: 
\begin{conj}[{\cite[Conjecture 1.2.5]{CPW17}}]\label{conj}
Let $S$ be a log del Pezzo surface (i.e., $S$ is a del Pezzo surface with at most quotient singularities). 
Then $-K_S \in \Ampc (S)$ if and only if $\Ampc (S) = \Amp (S)$. 
\end{conj}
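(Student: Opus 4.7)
One direction is immediate: if $\Ampc(S)=\Amp(S)$, then since $-K_S$ is ample on the del Pezzo surface $S$, we have $-K_S\in\Amp(S)=\Ampc(S)$. The content of the conjecture is the converse, and my plan is to leverage a fixed $(-K_S)$-polar cylinder as a ``universal'' cylinder, varying only the effective $\bQ$-divisor supported on its complement.

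Concretely, suppose $U=S\setminus\Supp(D_0)$ is an $(-K_S)$-polar cylinder with $D_0\sim_\bQ -K_S$, and let $B_1,\dots,B_k$ be the irreducible components of $B:=(S\setminus U)_{\red}$. Define
\[
\sigma(U):=\Big\{\,\sum_{i=1}^k b_i[B_i]\in\Cl(S)_\bQ \;\Big|\; b_i\ge 0\,\Big\}.
\]
Any class in $\sigma(U)\cap\Amp(S)$ admits an effective representative supported in $B$, hence yields an $H$-polar cylinder with the same underlying $U$. So the first goal is to show $\sigma(U)\supseteq\Amp(S)$, possibly after enlarging $B$ by combining several $(-K_S)$-polar cylinders.

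To analyze $\sigma(U)$ I would pass to the minimal resolution $\pi:\tilde S\to S$. Since cylinders are locally $\bA^1\times Z$, they cannot contain isolated surface singularities, so $U\cap\Sing(S)=\emptyset$ and $U$ lifts to a cylinder $\tilde U\subset\tilde S$; the complement $\tilde S\setminus\tilde U$ then contains the full $\pi$-exceptional locus together with the strict transforms $\widetilde{B_i}$. The exceptional classes span $\Ker(\pi_*)\subset\Pic(\tilde S)_\bQ$, so after push-forward the question reduces to whether $\sum\bQ_{\ge 0}[B_i]$ covers $\Amp(S)$. Because $D_0\sim_\bQ -K_S$ lies in the relative interior of $\sigma(U)$, the cone $\sigma(U)$ automatically contains an open neighborhood of $[-K_S]$ in $\Cl(S)_\bQ$; combined with the fact that $\Amp(S)$ is rational polyhedral (as $S$ is a Fano-type surface), the plan is to propagate this neighborhood to the whole ample cone by producing auxiliary cylinders adapted to each extremal ray.

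The main obstacle is that a single $(-K_S)$-polar cylinder essentially never suffices: one must construct a family of cylinders whose boundary supports collectively span every direction of $\Amp(S)$ by effective combinations. Producing such a family demands an explicit, case-by-case construction depending on the degree $K_S^2$ and the singularity configuration of $S$—typically via distinguished pencils of rational curves through or tangent to the singular locus. This is the technical heart of the present paper in the Du Val, degree-$\ge 3$ regime, and for the full conjecture I expect the difficulty to concentrate in low degrees and in the non-Du Val quotient cases, where the ample cone is intricately structured and the admissible pencils are severely constrained.
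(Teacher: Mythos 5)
The statement you were asked about is a \emph{conjecture} (quoted from \cite{CPW17}); the paper does not prove it in general, and neither do you. What the paper actually establishes is the special case of Du Val del Pezzo surfaces of degree $\ge 3$, and there the only nontrivial content is Theorem \ref{main(1)}: for such $S$ one shows $\Ampc(S)=\Amp(S)$ outright, by an explicit case-by-case construction of $\bP^1$-fibrations satisfying $(\ast_n)$, distinguished sections and rational curves $\wGamma$ (Sections \ref{3}--\ref{5}), with a \emph{different} cylinder chosen for each ample class $H$ depending on the sign pattern and ratios of its coefficients. Your proposal defers exactly this content (``this is the technical heart of the present paper''), so it is a strategy outline rather than a proof: the essential step --- producing, for every ample $H$, a cylinder whose boundary supports an effective divisor $\bQ$-linearly equivalent to $H$ --- is named but not carried out. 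That is a genuine gap, and it is the whole difficulty; the trivial direction you do prove correctly is one sentence.

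Beyond the missing construction, two steps of your framework are shakier than you state. First, membership of $H$ in your cone $\sigma(U)$ with coefficients $b_i\ge 0$ does not immediately give an $H$-polar cylinder: the definition requires $S\setminus\Supp(D)=U$ exactly, so you need all coefficients strictly positive (or a separate argument that the larger complement is still a cylinder); this is why the paper's divisors $D$ in Lemmas \ref{lem(4-1-2)}, \ref{lem(4-2-2)}, etc.\ are built with small $\varepsilon>0$ so that every boundary component occurs with positive multiplicity. Second, your claim that $\sigma(U)$ ``automatically contains an open neighborhood of $[-K_S]$ in $\Cl(S)_\bQ$'' only gives a neighborhood inside the span of the boundary classes; that this span is all of $\Cl(S)_\bQ$ needs justification (it amounts to $\Cl(U)_\bQ=0$), and even granted this, a single cylinder's cone $\sigma(U)$ is in general a proper subcone of $\Amp(S)$, so the ``propagation to the whole ample cone'' is precisely the unproved family of auxiliary cylinders. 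In short: the paper's route to its corollary is the concrete fibration-by-fibration construction of Sections \ref{3}--\ref{4}, and your proposal, as written, does not replace it.
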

In this article, we study the configuration of sets of cylindrical ample divisors of Du Val del Pezzo surfaces. 
Note that the existing condition of anti-canonical polar cylinders in Du Val del Pezzo surfaces is completely determined as follows: 
\begin{thm}[{\cite[Theorem 1.5]{CPW16b}}]\label{CPW16}
Let $S$ be a Du Val del Pezzo surface of degree $d$ such that $\Sing (S) \not= \emptyset$. Then: 
\begin{enumerate}
\item If $d \ge 3$, then $-K_S \in \Ampc (S)$. 
\item If $d=2$, then $-K_S \not\in \Ampc (S)$ if and only if $S$ allows only a singular point of type $A_1$. 
\item If $d=1$, then $-K_S \not\in \Ampc (S)$ if and only if $S$ allows only a singular point of types $A_1$, $A_2$, $A_3$, $D_4$. 
\end{enumerate}
\end{thm}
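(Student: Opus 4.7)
The statement naturally splits into existence and non-existence parts, organized by a case analysis using the Dynkin-type classification of Du Val del Pezzo surfaces.

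For the existence claims (part (1), and the affirmative cases of (2) and (3)), my approach is to pass to the minimal resolution $\pi : \wS \to S$. Because Du Val singularities have zero discrepancy, $\pi^*(-K_S) = -K_{\wS}$, so it suffices to produce on $\wS$ an effective $\bQ$-divisor $\wD \sim _{\bQ} -K_{\wS}$ whose support contains every $\pi$-exceptional curve and whose complement $\wS \setminus \Supp(\wD)$ is a cylinder; the pushforward $D := \pi _* \wD$ then defines a $(-K_S)$-polar cylinder on $S$. To build $\wD$, I would take an $\bA^1$-fibration induced by linear projection of the anti-canonically embedded $S \subset \bP^d$ through a singular point, and assemble $\wD$ as a chosen section, the degenerate fibres of this fibration and the exceptional divisor, with coefficients adjusted so that the total class is $-K_{\wS}$. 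Whenever $d \ge 3$, or $d = 2$ with a singularity strictly worse than $A_1$, or $d = 1$ outside the exceptional list in (3), there is enough flexibility to absorb all $(-2)$-curves into the degenerate fibres.

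For the non-existence claims, the strategy is contrapositive. Assuming $D \sim _{\bQ} -K_S$ is effective with cylinder complement, the induced $\bA^1$-fibration on $\wS \setminus \pi ^{-1}(\Supp(D))$ extends to a $\bP^1$-fibration $\varphi : \wS \to \bP^1$. Intersecting $\wD := \pi ^* D$ with the general fibre $F$ of $\varphi$ gives $\wD \cdot F = 2$, and every $(-2)$-curve of $\pi$ must sit inside some degenerate fibre of $\varphi$. Combining this with the log canonical threshold inequality for the pair $(S, \frac{1}{d} D)$ produces a lower bound on $K_S^2 = d$ that is violated exactly when $\Sing(S)$ consists only of singularities of the small types enumerated in (2) and (3).

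The main technical obstacle is the uniform non-existence argument: unlike the existence direction, which is settled by explicit pencil constructions handled case-by-case, the negative direction must simultaneously exclude every choice of anti-canonical polar divisor $D$. This requires sharp Seshadri-type or $\alpha$-invariant estimates tailored to the configuration of $(-2)$-curves in the relevant Dynkin diagrams. The $D_4$-case in part (3) is expected to be the most delicate, being the largest Dynkin diagram that still forbids an anti-canonical polar cylinder on a degree $1$ surface; handling it uniformly is where the bulk of the combinatorial work will lie.
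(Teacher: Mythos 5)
This statement is not proved in the paper at all: it is quoted verbatim from \cite{CPW16b} (Theorem 1.5 there) and used as a black box, so the only ``proof'' the paper offers is the citation. Your proposal, by contrast, is a plan rather than a proof, and as it stands it has genuine gaps in both directions. In the existence direction, the assertion that ``whenever $d\ge 3$, or $d=2$ with a singularity worse than $A_1$, or $d=1$ outside the exceptional list, there is enough flexibility to absorb all $(-2)$-curves into the degenerate fibres'' is exactly the content of the theorem; no pencils are exhibited, no coefficients are computed, and the case analysis over the Dynkin types (which is where all the work lies in \cite{CPW16b}) is not carried out.

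The non-existence direction is where the proposal would actually fail as written. First, the claim that every $(-2)$-curve must sit inside a degenerate fibre of the extended $\bP^1$-fibration is unjustified: a $(-2)$-curve contained in the pullback of $\Supp(D)$ can perfectly well be horizontal (in the present paper the section $\wD_0$ is itself a $(-2)$-curve), and this possibility has to be handled, not excluded by fiat. Second, the $\bA^1$-fibration on the cylinder extends to a morphism to $\bP^1$ only after resolving the base point(s) of the associated pencil, so the fibration generally lives on a further blow-up of $\wS$, and the intersection bookkeeping ($\wD\cdot F=2$, coefficients along fibre components) must be done there. Third, the decisive step---``the log canonical threshold inequality for the pair $(S,\tfrac1d D)$ produces a lower bound on $d$ violated exactly for the listed types''---is not an argument but a placeholder: you yourself defer it to unspecified ``Seshadri-type or $\alpha$-invariant estimates,'' whereas the actual proof in \cite{CPW16b} rests on a delicate analysis of the non-log-canonical locus of the pair $(S,D)$ for an arbitrary effective $D\sim_{\bQ}-K_S$, together with a case-by-case study of the $(-2)$-curve configurations; there is no known one-line degree inequality that separates $A_1,A_2,A_3,D_4$ in degree $1$ (and $A_1$ in degree $2$) from the remaining types. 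In short, the proposal identifies the right general framework but does not establish the statement, and it cannot be compared favourably with the paper, which legitimately relies on the published proof in \cite{CPW16b}.
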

In this article, we develop the result of Theorem \ref{CPW16} (1). 
That is, our main theorem is as follows: 
\begin{thm}\label{main(1)}
Let $S$ be a Du Val del Pezzo surface of degree $d \ge 3$ such that $\Sing (S) \not= \emptyset$. 
Then $S$ contains an $H$-polar cylinder for every ample $\bQ$-divisor $H$ on $S$, i.e., $\Ampc (S) = \Amp (S)$. 
\end{thm}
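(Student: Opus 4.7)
The first step is to dispose of the case $\rho(S) = 1$: here $\Amp(S) = \bQ_{>0}[-K_S]$ is a single ray, and Theorem~\ref{CPW16}(1) (applicable since $d \ge 3$) already produces a $(-K_S)$-polar cylinder, which then doubles as an $H$-polar cylinder for every $H \in \Amp(S)$. So from here on we may assume $\rho(S) \ge 2$ and exploit the fact that Du Val del Pezzo surfaces of degree $d \ge 3$ with prescribed non-empty ADE singularity type form a finite classification list, reducing us to a finite case analysis indexed by pairs $(d,T)$.

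For each admissible pair $(d,T)$, the construction would take place on the minimal resolution $\pi \colon \widetilde{S} \to S$, which is a smooth weak del Pezzo surface. I would exhibit a finite collection of $\bP^1$-fibrations $\widetilde{f}_i \colon \widetilde{S} \to \bP^1$, arising either directly on $\widetilde{S}$ or as pullbacks of rulings via auxiliary birational contractions to $\bP^2$ or to a Hirzebruch surface. Each such fibration, endowed with a section and a choice of one component per degenerate fibre to retain, cuts out an open subset of $\widetilde{S}$ isomorphic to $\bA^1 \times Z$; its image under $\pi$ is a cylinder $U_i \subset S$, and the reduced complement $B_i := S \setminus U_i$ decomposes into specific irreducible components $B_{i,1}, \ldots, B_{i,m_i}$.

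The effect is that $U_i$ is an $H$-polar cylinder for every ample $H$ numerically equivalent to a strictly positive $\bQ$-combination $\sum_j a_{i,j} B_{i,j}$. The theorem therefore reduces to verifying, in each case $(d,T)$, that the open cones $\sum_j \bQ_{>0}[B_{i,j}]$ generated by our family of cylinders jointly cover $\Amp(S) \subset \Cl(S) \otimes_{\bZ} \bQ$. Since Theorem~\ref{CPW16} already supplies a neighbourhood of $\bQ_{>0}[-K_S]$ inside $\Ampc(S)$, the remaining challenge is concentrated near the extremal boundary of $\Amp(S)$: for every extremal ray of $\NE(S)$ one must produce a cylinder whose boundary components span, numerically, a neighbourhood of the dual face of the ample cone. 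This is where the main geometric obstacle lies, and it requires a careful case-by-case analysis of how the $(-1)$-curves of $\widetilde{S}$ interact with the $(-2)$-curve configurations over $\Sing(S)$; the combinatorics is most delicate when $\rho(S)$ is intermediate (so that $\Amp(S)$ has several extremal walls), while high-$\rho(S)$ cases (mild singularities, e.g.\ a single $A_1$ on a degree-$5$ surface) and low-$\rho(S)$ cases (deep singularities on a cubic) are either handled by $\rho = 1$ triviality or by essentially one good $\bP^1$-fibration per wall.
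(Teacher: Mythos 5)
Your opening reduction (handle $\rho(S)=1$ with Theorem~\ref{CPW16}(1), then work on the minimal resolution with $\bP^1$-fibrations whose complements give cylinders, and try to write an arbitrary ample $H$ as a positive combination of the boundary components) is indeed the same general shape as the paper's argument. But what you have written is a plan, not a proof, and the step you defer --- ``verifying, in each case $(d,T)$, that the open cones $\sum_j \bQ_{>0}[B_{i,j}]$ jointly cover $\Amp(S)$'' --- is precisely the entire mathematical content of the theorem. You give no mechanism for proving this covering. In the paper this is done not by an extremal-ray-by-extremal-ray analysis of $\NE(S)$ for each pair $(d,T)$, but by a uniform criterion (Theorem~\ref{thm(4)}): one fixes a single $\bP^1$-fibration satisfying the condition $(\ast_n)$ (whose existence for all types except $(3,4A_1)$ is Proposition~\ref{prop(3)}, via Table~\ref{table}), writes $H\sim_\bQ a_0D_0+aF+\sum b_iE_i+\cdots$ in the basis adapted to the fibration, and then proves the positivity of the specific coefficient combinations needed (Lemmas~\ref{lem(4-1-1)}, \ref{lem(4-2-1)}, \ref{lem(4-3-1)}, \ref{lem(4-4-1)}) by intersecting $H$ with images of members of explicitly constructed linear systems $|\wDelta^{(1)}|,\dots,|\wDelta^{(7)}|$, whose nonemptiness comes from Riemann--Roch estimates (Lemma~\ref{linear system}), together with auxiliary sections $\wGamma$ produced in Lemma~\ref{rational curve}. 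Without some substitute for these positivity inputs, there is no reason why the finitely many cones you propose should cover $\Amp(S)$, and your plan does not even specify which fibrations and which boundary components to take in each case (the choice in the paper depends delicately on sign conditions and on the ordering of the ratios $b_i/\alpha_i$, which is what forces the use of both $\wE_i$ and $\wE_i'$ and of the extra curves $\wGamma$, $F_0$).

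Two further concrete problems. First, your claim that Theorem~\ref{CPW16} ``already supplies a neighbourhood of $\bQ_{>0}[-K_S]$ inside $\Ampc(S)$'' is unjustified: that theorem only asserts $-K_S\in\Ampc(S)$, and passing from one polarizing class to an open neighbourhood requires the boundary components of the chosen cylinder to span $\Cl(S)_\bQ$ and $-K_S$ to lie in the interior of the cone they generate, neither of which you verify. Second, your case split by $\rho(S)$ (``high-$\rho$ easy, low-$\rho$ easy, intermediate delicate'') does not match the actual difficulty: in the paper the exceptional cases needing bespoke treatment are $(d,\mathrm{Dyn}(S))=(3,4A_1)$ (Lemma~\ref{lem(5-1)}) and the small-$n$ situations of \S\S\ref{4-5}, which your plan does not isolate. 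So while the strategy is pointed in the right direction, the proof as proposed has a genuine gap at its core.
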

Hence, we have the following corollary: 
\begin{cor}
Conjecture \ref{conj} is true for Du Val del Pezzo surfaces with degree $\ge 3$. 
\end{cor}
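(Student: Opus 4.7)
\medskip
\noindent\textbf{Proof outline.} My plan is to combine Theorem~\ref{CPW16}(1), which already supplies an anti-canonical polar cylinder, with a library of additional cylinder constructions, arranged so that the supports of their boundary divisors jointly cover the ample cone.

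A basic cone-expansion principle underlies the strategy: if $U = S \setminus \Supp(D_0)$ is a polar cylinder for an effective $\bQ$-divisor $D_0$ with irreducible components $\Gamma_1, \ldots, \Gamma_k$, then the same $U$ is an $H$-polar cylinder for every ample $\bQ$-divisor $H$ expressible as $H \sim _{\bQ} \sum_{i=1}^{k} a_i \Gamma_i$ with all $a_i > 0$. Hence the theorem reduces to producing, for each singular Du Val del Pezzo $S$ of degree $d \geq 3$, a finite collection of polar cylinders whose boundary-component cones cover $\Amp(S)$. The anti-canonical cylinder from Theorem~\ref{CPW16}(1) already handles an open region around $[-K_S]$ in $\Pic(S)_{\bQ}$.

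To cover the rest of $\Amp(S)$, I would proceed case by case using the classification of Du Val del Pezzo surfaces of degree $d \in \{3,4,5,6,7,8\}$ with $\Sing(S) \neq \emptyset$ (the degree $d = 8$ case being $\bP(1,1,2)$, whose Picard rank is $1$ and hence trivial). Let $\pi: \tilde S \to S$ be the minimal resolution, with reduced exceptional divisor $E = \sum E_{ij}$ whose dual graph is the Dynkin configuration of $\Sing(S)$; then $\tilde S$ is a smooth weak del Pezzo surface of degree $d$, with explicit geometry in each case. The main construction is: given a $\bP^1$-fibration $\tilde\pi : \tilde S \to \bP^1$, remove a general fiber $F_0$ and a suitable section (or multisection) $\sigma$; the open subset $\tilde S \setminus (F_0 \cup \sigma)$ is a cylinder, and by including the exceptional components $E_{ij}$ in the boundary divisor, the cylinder descends to $S$. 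Varying the fibration $\tilde\pi$, the section $\sigma$, and the coefficients yields a family of polar cylinders on $S$ whose boundary cones should together exhaust $\Amp(S)$.

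The main obstacle is the case-by-case verification of this covering property, particularly in low-degree cases ($d = 3, 4$) with many singular points, where $\rho(S) = 10 - d - \sum r_i$ can be as large as $6$ or $7$ and $\Amp(S)$ has many walls. Each wall corresponds to a negative curve on $S$ (or a combination of $(-1)$- and $(-2)$-curves on $\tilde S$); ample classes near such a wall require polar cylinders whose boundary includes a curve on that wall, forcing a systematic enumeration of $\bP^1$-fibrations on the explicit weak del Pezzo $\tilde S$. A secondary technical point is descent: the $(-2)$-exceptional curves $E_{ij}$ must lie inside the boundary divisor, so the chosen $\bP^1$-fibrations must interact compatibly with each Du Val configuration — typically by containing each connected $(-2)$-chain in a union of fiber components and sections.
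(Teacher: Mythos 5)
There is a genuine gap. As stated in the paper, this corollary needs almost no work: the implication ``$\Ampc (S) = \Amp (S) \Rightarrow -K_S \in \Ampc (S)$'' is trivial since $-K_S$ is ample, and the nontrivial implication holds because either $\Ampc (S) = \Amp (S)$ unconditionally (Theorem \ref{main(1)} when $\Sing (S) \not= \emptyset$, Theorem \ref{CPW17} (2) when $S$ is smooth of degree $\ge 4$) or $-K_S \not\in \Ampc (S)$ so the implication is vacuous (Theorem \ref{CPW17} (1), (3) for a smooth cubic). Your proposal never makes this reduction; in particular you do not treat the smooth degree-$3$ case at all, where the equivalence in Conjecture \ref{conj} holds only because both sides fail. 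Instead you set out to re-prove the substantive statement $\Ampc (S) = \Amp (S)$ for singular $S$, i.e.\ essentially Theorem \ref{main(1)}, and there your argument stops at the point where the real work begins: you assert that the cones positively spanned by the boundary components of your cylinders ``should together exhaust $\Amp (S)$'' and explicitly defer the case-by-case verification. That verification is exactly the content of Sections \ref{3}--\ref{5} of the paper (the linear systems of Lemma \ref{linear system} and the curves of Lemma \ref{rational curve} are what prove the positivity statements, Lemmas \ref{lem(4-1-1)}, \ref{lem(4-2-1)}, \ref{lem(4-3-1)}, \ref{lem(4-4-1)}, needed to write every ample $H$ as an effective combination with the prescribed support), so leaving it as an ``obstacle'' leaves the proof incomplete.

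There is also a concrete error in your ``main construction'': for a $\bP ^1$-fibration $\wS \to \bP ^1_{\bk}$ with reducible fibers (which always occur here, since the $(-2)$-curves sit in fibers or sections), the complement $\wS \backslash (F_0 \cup \sigma )$ of a general fiber and a section is \emph{not} a cylinder; one must additionally remove all but one component of each singular fiber, as in Lemma \ref{lem(2-3)} and the explicit complements used in the proofs of Lemmas \ref{lem(4-1-2)}, \ref{lem(4-2-2)}, \ref{(4-3-2)}, \ref{lem(4-4-2)}, \ref{lem(4-5-2)}. Your cone-expansion principle (positive combinations of the boundary components give $H$-polarity) is fine, but once the boundary is forced to contain every exceptional $(-2)$-curve plus these extra fiber components, showing that an arbitrary ample $H$ admits such a representation with strictly positive coefficients is precisely the delicate part (choice of which end component $\wE _i$ or $\wE _i'$ to keep, the small $\varepsilon$ adjustments, and the intersection-theoretic positivity coming from Riemann--Roch), none of which is supplied. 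So the proposal is a plausible program that parallels the paper's strategy for Theorem \ref{thm(4)}, but it does not constitute a proof of the corollary.
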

\begin{rem}
Let $S$ be a Du Val del Pezzo surface of degree $2$ such that $\Sing (S) \not= \emptyset$ and $-K_S \not\in \Ampc (S)$. 
Then the configuration of $\Ampc (S)$ is partially studied in {\cite{Bel23}}. 
\end{rem}
The article is organized as follows: 
In Section \ref{2}, we review some proprieties of weak del Pezzo surfaces and construct some examples of cylinders in Hirzebruch surfaces. 
In Section \ref{3}, we study a smooth rational surface $\wS$ with a $\bP ^1$-fibration structure. 
In more details, we consider the dimension of some linear systems on $\wS$ and construct some smooth rational curves on $\wS$. 
In Section \ref{4}, we prove Theorem \ref{thm(4)}. 
In other words, letting $S$ be a normal rational surface with several conditions, which almost Du Val del Pezzo surfaces of degree $\ge 3$ satisfy, we will construct an $H$-polar cylinder for any ample $\bQ$-divisor $H$ on $S$. 
In the last Section \ref{5}, we finally prove our main theorem ($=$Theorem \ref{main(1)}) by using results in Sections \ref{3} and \ref{4}. 
\begin{conventions}
A {\it del Pezzo surface} means a normal projective surface such that its anti-canonical divisor is ample. 
A {\it Du Val del Pezzo surface} means a del Pezzo surface with at worst Du Val singularities. 
For a Du Val del Pezzo surface $S$, ${\rm Dyn} (S)$ denotes the Dynkin type of its singularities (for example, ${\rm Dyn}(S)  = A_2+2A_1$ implies that $\Sing (S)$ consists of a Du Val singular point of type $A_2$ and two Du Val singular points of type $A_1$). 
A {\it weak del Pezzo surface} means a smooth projective surface such that its anti-canonical divisor is nef and big. 
For a Du Val del Pezzo surface or a weak del Pezzo surface, the self-intersection number of its (anti-)canonical divisor, which is an integer between $1$ and $9$, is called the {\it degree}. 
For an integer $m$, we say that an $m$-curve on a smooth projective surface is called a smooth rational curve  with self-intersection number $m$. 

For any weighted dual graph, a vertex $\circ$ with the number $m$ corresponds to an $m$-curve. 
Exceptionally, we omit this weight (resp. we omit this weight and use the vertex $\bullet$ instead of $\circ$) if $m=-2$ (resp. $m=-1$).
\end{conventions}
\begin{notation}
We employ the following notations: 
\begin{itemize}
\item $\bA ^d_{\bk}$: the affine space of dimension $d$. 
\item $\bP ^d_{\bk}$: the projective space of dimension $d$. 
\item $\bF _n$: the Hirzebruch surface of degree $n$, i.e., $\bF _n := \bP (\sO _{\bP ^1_{\bk}} \oplus \sO _{\bP ^1_{\bk}}(n))$. 
\item $\bA ^1_{\ast ,\bk}$: the affine line with one closed point removed, i.e., $\bA ^1_{\ast ,\bk} = \Spec (\bk [t^{\pm 1}])$. 
\item $\Cl (X)$: the divisor class group of $X$. 
\item $\Cl (X)_{\bQ} := \Cl (X) \otimes _{\bZ} \bQ$. 
\item $\rho (X)$: the Picard number of $X$. 
\item $K_X$: the canonical divisor on $X$. 
\item $\Sing (X)$: the set of singular points on $X$. 
\item $D_1\sim D_2$: $D_1$ and $D_2$ are linearly equivalent. 
\item $D_1\sim _{\bQ} D_2$: $D_1$ and $D_2$ are $\bQ$-linearly equivalent. 
\item $(D_1 \cdot D_2)$: the intersection number of $D_1$ and $D_2$. 
\item $(D)^2$: the self-intersection number of $D$. 
\item $\varphi ^{\ast}(D)$: the total transform of $D$ by a morphism $\varphi$. 
\item $\psi _{\ast}(D)$: the direct image of $D$ by a morphism $\psi$. 
\item $\Supp (D)$: the support of $D$. 
\item $|D|$: the complete linear system of $D$. 
\item $\sharp D$: the number of all irreducible components in $\Supp (D)$. 
\end{itemize}
\end{notation}
\begin{acknowledgment}
The author would like to thank the referees for useful comments that helped to improve this article. 
The author was supported by the Foundation of Research Fellows, The Mathematical Society of Japan. 
\end{acknowledgment}
\section{Preliminaries}\label{2}
In this subsection, we quickly review the basic but important results. 
Note that the minimal resolution of any Du Val del Pezzo surface is a weak del Pezzo surface. 
Hence, we deal with weak del Pezzo surfaces in this article, so that we summarize some facts about weak del Pezzo surfaces. 
The following result is important to study weak del Pezzo surfaces using explicit birational transformations: 
\begin{lem}\label{lem(2-1)}
Let $V$ be a weak del Pezzo surface. 
Then $V \simeq \bP ^1_{\bk} \times \bP ^1_{\bk}$, $V \simeq \bF _2$ or there exists a birational morphism $h : V \to \bP ^2_{\bk}$. 
\end{lem}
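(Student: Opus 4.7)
The plan is to prove the trichotomy by induction on the degree $d := (K_V)^2 \in \{1,\ldots,9\}$. The base cases are $d = 9$, where the classification of minimal rational surfaces (together with nefness of $-K_V$) gives $V \simeq \bP^2_{\bk}$, and $d = 8$, where $V$ must itself be a minimal rational surface (any smaller $d$ would have to arise from a blowup of a surface with $(K)^2 \geq 9$). Hence $V \simeq \bF_n$ for some $n \geq 0$, and the nefness of $-K_V$ evaluated on the negative section (which satisfies $(-K_V \cdot \sigma) = 2 - n$) forces $n \in \{0,1,2\}$. These are respectively $\bP^1_{\bk} \times \bP^1_{\bk}$, $\bF_1$ (which blows down to $\bP^2_{\bk}$), and $\bF_2$.

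For the inductive step $d \leq 7$, the crucial point is to produce a $(-1)$-curve on $V$. I would argue as follows. Since $-K_V$ is big, $K_V$ is not pseudo-effective, so the cone theorem yields a $K_V$-negative extremal ray. Its contraction $\varphi : V \to V''$ falls into one of three types. The case $V'' = \Spec \bk$ forces $V \simeq \bP^2_{\bk}$ and contradicts $d \leq 7$. If $\varphi$ is a $\bP^1$-fibration and $V$ has no $(-1)$-curve, then every fiber must be irreducible, whence $V$ is a Hirzebruch surface and $d = 8$, again a contradiction. Therefore $\varphi$ contracts a $(-1)$-curve $E$, and the resulting smooth surface $V'$ has $(K_{V'})^2 = d+1$. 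A short check via $K_V = \varphi^{\ast} K_{V'} + E$ together with the projection formula shows that $-K_{V'}$ remains nef and big, so $V'$ is a weak del Pezzo surface of degree $d+1 \leq 8$.

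By induction, $V'$ is one of the three listed types. If $V'$ already admits a birational morphism to $\bP^2_{\bk}$, composing with $\varphi$ concludes. If $V' \simeq \bP^1_{\bk} \times \bP^1_{\bk}$, then $V$ is a one-point blowup of $\bP^1_{\bk} \times \bP^1_{\bk}$; the proper transform of either ruling through the blown-up point is a $(-1)$-curve, whose contraction yields $\bF_1$, which then blows down to $\bP^2_{\bk}$. The delicate case is $V' \simeq \bF_2$: here $V$ is a one-point blowup of $\bF_2$ at a point $p$, and the nefness of $-K_V$ forces $p$ to lie off the $(-2)$-section (otherwise the proper transform of that section would be a $(-3)$-curve violating nefness). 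With $p$ off the negative section, the elementary transformation — contracting the proper transform of the fiber through $p$ — realizes a birational morphism $V \to \bF_1$, which composes with $\bF_1 \to \bP^2_{\bk}$ to give the desired morphism.

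The main obstacle is verifying that the weak del Pezzo hypothesis passes cleanly through each inductive step, and in particular ruling out the pathological configuration in the $\bF_2$ case; everything else reduces to standard Mori-theoretic manipulations on smooth rational surfaces.
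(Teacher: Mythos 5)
Your argument is correct in substance, but it is worth pointing out that the paper does not actually prove this lemma: it simply cites Dolgachev (Theorem 8.1.15), where the statement is established by essentially the same kind of analysis you carry out. So what you have done is supply a self-contained proof by induction on $d=(K_V)^2$, using the cone theorem to produce a $(-1)$-curve when $d\le 7$, checking that nefness and bigness of the anticanonical class descend under contraction of a $(-1)$-curve, and then handling the two exceptional targets $\bP^1_{\bk}\times\bP^1_{\bk}$ and $\bF_2$ by exhibiting an explicit birational morphism to $\bF_1$ (ruling out a blown-up point on the $(-2)$-section of $\bF_2$ via adjunction, since a $(-3)$-curve would meet $-K_V$ negatively). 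All the key steps check out: $K_V=\varphi^{\ast}K_{V'}+E$ and the projection formula do give nefness of $-K_{V'}$, and $(-K_{V'})^2=d+1>0$ then gives bigness; the elementary transformation at a point off the negative section of $\bF_2$ does land in $\bF_1$. Two small imprecisions that you should repair but that do not affect the structure: first, you implicitly use that $V$ is rational (so that the base of the ruling is $\bP^1_{\bk}$ and the classification of minimal rational surfaces applies); this needs a sentence, e.g.\ $h^i(\sO_V)=0$ for $i>0$ by Kawamata--Viehweg vanishing applied to $K_V+(-K_V)$, whence rationality by Castelnuovo. Second, at $d=8$ the phrase ``$V$ must itself be a minimal rational surface'' is not quite right, since $\bF_1$ is not minimal; the correct statement is that $V$ is either minimal (hence $\bF_n$ with $n\ne 1$, and nefness on the negative section forces $n\in\{0,2\}$) or a one-point blow-up of $\bP^2_{\bk}$, i.e.\ $\bF_1$ — either way the trichotomy holds. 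Compared with simply quoting the reference as the paper does, your route costs a page but makes transparent exactly where the hypotheses ``nef'' and ``big'' are used, which is a reasonable trade-off.
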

\begin{proof}
See, e.g., {\cite[Theorem 8.1.15]{Dol12}}. 
\end{proof}
Note that the classification of weak del Pezzo surfaces of degree $\ge 3$ is summarized in {\cite{CT88}} and {\cite[\S \S 8.4--8.7 and \S \S 9.2]{Dol12}}. 
Moreover, we will use the following result: 
\begin{lem}\label{lem(2-2)}
Let $V$ be a weak del Pezzo surface and let $\hD$ be a divisor on $V$ such that $(\hD)^2 = (\hD \cdot K_V) = -1$ and $(\hD \cdot \hR) \ge 0$ for every $(-2)$-curve $\hR$ on $V$. 
Then there exists a $(-1)$-curve $\hGamma$ on $V$ such that $\hD \sim \hGamma$. 
\end{lem}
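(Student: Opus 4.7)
The plan is to show $|\hD|$ is non-empty via Riemann--Roch and then force any effective representative to be a single $(-1)$-curve using the $(-2)$-curve hypothesis and the intersection-theoretic constraints on $V$.

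For the effectivity step, I will apply Riemann--Roch to obtain $\chi(\sO_V(\hD)) = 1 + \tfrac{1}{2}((\hD)^2 - (\hD \cdot K_V)) = 1$. By Serre duality, $h^2(V,\sO_V(\hD)) = h^0(V,\sO_V(K_V - \hD))$, and the computation $((K_V - \hD) \cdot (-K_V)) = -K_V^2 - 1 < 0$ combined with the nefness of $-K_V$ shows $K_V - \hD$ is not effective. Hence $h^0(V,\sO_V(\hD)) \ge 1$, and I fix an effective divisor $E \sim \hD$.

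Next, I write $E = \sum a_i C_i$ with distinct irreducible components $C_i$ and $a_i \ge 1$. Since $(E \cdot (-K_V)) = 1$ and each $(C_i \cdot (-K_V)) \ge 0$, exactly one component $\Gamma$ has coefficient one and $(\Gamma \cdot (-K_V)) = 1$, while every other component $R_j$ satisfies $(R_j \cdot (-K_V)) = 0$. For such $R_j$, adjunction gives $(R_j)^2 = 2 p_a(R_j) - 2 \ge -2$, while the Hodge index theorem applied to the nef and big class $-K_V$ forces $(R_j)^2 \le 0$, with the equality case excluded because $R_j$ is not numerically trivial. Thus each $R_j$ is a $(-2)$-curve, and I write $E = \Gamma + F$ with $F = \sum b_j R_j$ ($b_j \ge 1$); note $\Gamma$ is distinct from every $R_j$ since $(\Gamma \cdot K_V) = -1$.

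Finally I combine intersection numbers. The hypothesis gives $(E \cdot F) = \sum b_j (E \cdot R_j) \ge 0$, i.e., $(\Gamma \cdot F) + (F)^2 \ge 0$. Substituting into $-1 = (E)^2 = (\Gamma)^2 + 2(\Gamma \cdot F) + (F)^2$ yields $(\Gamma)^2 + (\Gamma \cdot F) \le -1$. But adjunction forces $(\Gamma)^2 = 2 p_a(\Gamma) - 1 \ge -1$, and $(\Gamma \cdot F) \ge 0$ since $\Gamma$ is distinct from every $R_j$; the only reconciliation is $(\Gamma)^2 = -1$, $(\Gamma \cdot F) = 0$, and $(F)^2 = 0$. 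Since the intersection form on any configuration of $(-2)$-curves of $V$ is negative definite (they are contracted by the anticanonical morphism to the Du Val model), $F = 0$, and hence $\hD \sim \Gamma$ is a $(-1)$-curve. I anticipate the main obstacle to be the effectivity argument; once $|\hD|$ is known to be non-empty, the numerical constraints completely dictate the conclusion.
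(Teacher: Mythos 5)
Your argument is correct. Note that the paper does not prove this lemma at all: it simply cites Dolgachev's \emph{Classical Algebraic Geometry}, Lemma 8.2.22, so your proposal supplies a complete self-contained proof where the paper has only a reference; the argument you give is the standard one and is essentially what the cited source does. All the key steps check out: Riemann--Roch gives $\chi(\sO_V(\hD))=1$, and $h^2=h^0(K_V-\hD)=0$ because $(K_V-\hD)\cdot(-K_V)=-K_V^2-1<0$ while $-K_V$ is nef, so $\hD$ is effective; in an effective representative, nefness of $-K_V$ isolates a unique component $\Gamma$ with coefficient $1$ and $(\Gamma\cdot K_V)=-1$, and adjunction plus the Hodge index theorem (using $K_V^2>0$ and that an irreducible curve is not numerically trivial) shows every other component is a $(-2)$-curve; finally the hypothesis $(\hD\cdot\hR)\ge 0$ together with $(\Gamma)^2\ge -1$, $(\Gamma\cdot F)\ge 0$ and the negative definiteness of the lattice spanned by $(-2)$-curves (equivalently, Hodge index applied to $F$ with $(F\cdot K_V)=0$) forces $(\Gamma)^2=-1$, $p_a(\Gamma)=0$ and $F=0$, so $\hD\sim\Gamma$ is a $(-1)$-curve.
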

\begin{proof}
See {\cite[Lemma 8.2.22]{Dol12}}. 
\end{proof}
On the other hand, we will construct cylinders in smooth rational surfaces using the following lemma: 
\begin{lem}\label{lem(2-3)}
Let $\hM$ and $\hF$ be the minimal section and a general fiber on the Hirzebruch surface $\bF _n$ of degree $n$, respectively. 
Then the following assertions hold: 
\begin{enumerate}
\item Let $\hF_1,\dots ,\hF _r$ be distinct fibers on $\bF _n$ other than $\hF$. 
Then $\bF _n \backslash (\hM \cup \hF \cup \hF _1 \cup \dots \cup \hF _r) \simeq \bA ^1_{\bk} \times (\bA ^1_{\bk} \backslash \{ r\text{ points}\})$. 
\item Let $\hGamma$ be a smooth rational curve with $\hGamma \sim \hM + n\hF$. 
Then $\bF _n \backslash (\hGamma \cup \hM \cup \hF ) \simeq \bA ^1_{\bk} \times \bA ^1_{\ast ,\bk}$. 
\item Let $\hGamma$ be a smooth rational curve with $\hGamma \sim \hM + (n+1)\hF$. 
Then $\bF _n \backslash (\hGamma \cup \hM \cup \hF _0 ) \simeq \bA ^1_{\bk} \times \bA ^1_{\ast ,\bk}$, where $\hF _0$ is the fiber satisfying $\hM \cap \hGamma \cap \hF _0 \not= \emptyset$. 
\end{enumerate}
\end{lem}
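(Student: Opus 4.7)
The plan is to exploit the $\bP^1$-bundle structure $\pi : \bF_n \to \bP^1_{\bk}$ together with the fact that line bundles and $\bG_m$-bundles over the affine line are trivial; parts (1) and (2) are then essentially direct, while part (3) reduces to part (2) via an elementary transformation at the triple intersection point.

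For (1), I observe that $\bF_n \setminus \hM$ has the structure of a geometric line bundle over $\bP^1_{\bk}$ via $\pi$. Restricting the base to $\bP^1_{\bk} \setminus \pi(\hF) \cong \bA^1_{\bk}$ and invoking $\Pic(\bA^1_{\bk}) = 0$ trivializes this line bundle, so $\bF_n \setminus (\hM \cup \hF) \cong \bA^1_{\bk} \times \bA^1_{\bk}$. Removing the remaining fibers $\hF_1, \ldots, \hF_r$ then corresponds to removing $r$ distinct closed points from the base $\bA^1_{\bk}$, yielding $\bA^1_{\bk} \times (\bA^1_{\bk} \setminus \{r \text{ points}\})$. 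For (2), since $(\hGamma \cdot \hF) = 1$ and $(\hGamma \cdot \hM) = -n + n = 0$, $\hGamma$ is a section of $\pi$ disjoint from $\hM$; hence $\bF_n \setminus (\hM \cup \hGamma)$ is a $\bG_m$-bundle over $\bP^1_{\bk}$, and the same restriction-and-trivialization argument (now using that every $\bG_m$-bundle over $\bA^1_{\bk}$ is trivial, again from $\Pic(\bA^1_{\bk}) = 0$) gives $\bA^1_{\bk} \times \bA^1_{\ast,\bk}$.

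For (3), note that $(\hGamma \cdot \hM) = -n + (n+1) = 1$, so $\hM$ and $\hGamma$ meet at a unique point $p$, which by assumption lies on $\hF_0$. Let $\sigma : \wS \to \bF_n$ be the blowup of $p$ with exceptional curve $\wE$, and write $\widetilde{M}, \wGamma, \wF_0$ for the strict transforms of $\hM, \hGamma, \hF_0$. A direct intersection-number computation gives $\widetilde{M}^2 = -(n+1)$, $\wGamma^2 = n+1$, and $\wF_0^2 = -1$, with $\widetilde{M}, \wGamma, \wF_0$ pairwise disjoint while $\wE$ meets each of them transversally once. Contracting the $(-1)$-curve $\wF_0$ yields a birational morphism $\tau : \wS \to \bF_{n+1}$; setting $\hM' := \tau(\widetilde{M})$, $\hGamma' := \tau(\wGamma)$, $\hF' := \tau(\wE)$, one checks that $\hM'$ is the minimal section of $\bF_{n+1}$, $\hF'$ is a fiber, and $\hGamma' \sim \hM' + (n+1)\hF'$. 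Since $\sigma$ and $\tau$ are isomorphisms away from $\wE$ and $\wF_0$ respectively, one has $\bF_n \setminus (\hGamma \cup \hM \cup \hF_0) \cong \bF_{n+1} \setminus (\hGamma' \cup \hM' \cup \hF')$, and applying (2) to the right-hand side (with $n$ replaced by $n+1$) yields the desired $\bA^1_{\bk} \times \bA^1_{\ast,\bk}$.

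The main obstacle is modest and essentially bookkeeping: parts (1) and (2) reduce to the vanishing of $\Pic(\bA^1_{\bk})$ once the relevant complement is identified as an affine (respectively $\bG_m$-) bundle, while part (3) requires careful tracking of intersection numbers and the ruled-surface structure through the elementary transformation to confirm that the transformed configuration on $\bF_{n+1}$ satisfies the hypotheses of (2).
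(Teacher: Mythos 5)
Your proposal is correct and, for part (3), is exactly the paper's argument: blow up the triple intersection point, contract the strict transform of $\hF_0$ to land in $\bF_{n+1}$ with the image of $\hGamma$ linearly equivalent to $\cM + (n+1)\cF$, and then apply part (2). For parts (1) and (2) the paper simply cites them as well-known (pointing to Kojima), and your standard trivialization argument over the affine base using $\Pic (\bA ^1_{\bk}) = 0$ is a correct way to supply the omitted details.
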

\begin{proof}
In (1) and (2), it seems that these assertions are well-known (see also {\cite{Koj02}}). 

In (3), notice that $\hM \cap \hGamma \cap \hF _0$ consists of exactly one point, say $x$. 
Let $\varphi :V \to \bF _n$ be a blowing-up at $x$ and let $E$ be the exceptional curve of $\varphi$. 
Since $\varphi ^{-1}_{\ast}(\hF _0)$ is a $(-1)$-curve, we obtain a contraction $\psi :V \to \bF _{n+1}$ of $\varphi ^{-1}_{\ast}(\hF _0)$. 
Then $\cM := \psi _{\ast} (\varphi ^{-1}_{\ast}(\hM))$ and $\cF := \psi _{\ast}(E)$ be the minimal section and a fiber of $\bF _{n+1}$, respectively. 
Moreover, $\cGamma := \psi _{\ast} (\varphi ^{-1}_{\ast}(\hGamma))$ is a smooth rational curve on $\bF _{n+1}$ such that  $\cGamma \sim \cM + (n+1)\cF$. 
Hence, by virtue of (2) we have $\bF _n \backslash (\hM \cup \hGamma \cup \hF _0 ) \simeq \bF _{n+1} \backslash (\cGamma \cup \cM \cup \cF ) \simeq \bA ^1_{\bk} \times \bA ^1_{\ast ,\bk}$.  
\end{proof}
\section{Observations of some smooth rational surfaces}\label{3}
Let $\wS$ be a smooth rational surface with a $\bP ^1$-fibration structure $g: \wS \to \bP ^1_{\bk}$. 
For a non-negative integer $n$, we say that $g$ satisfies the condition $(\ast _n)$ if the following three conditions about $g$ hold: 
\begin{itemize}
\item $g$ admits a section with self-intersection number $-n$, say $\wD _0$. 
\item All singular fibers of $g$ consist of only $(-1)$-curves and $(-2)$-curves. 
\item Any irreducible curve on $\wS$ with self-intersection number $\le 2$ is contained in singular fibers of $g$ except for at most one which is $\wD _0$. 
\end{itemize}
In this section, we shall study smooth rational surfaces with a $\bP ^1$-fibration satisfying the condition $(\ast _n)$ for some $n \ge 0$. 
\subsection{Notation on some curves}\label{3-1}
Let $\wS$ be a smooth projective surface with a $\bP ^1$-fibration $g:\wS \to \bP ^1_{\bk}$, which satisfies the condition $(\ast _n)$ for some $n \ge 0$. 
Hence, there is a section $\wD _0$ of $g$ with self-intersection number $-n$. 
Then we notice the following lemma: 
\begin{lem}
Let the notation be the same as above, and let $\wF _i$ be a singular fiber of $g$. 
Then the weighted dual graph of $\wF _i+ \wD _0$ is one of the following: 
\begin{align}
\label{I-1} &\xygraph{\circ ([]!{+(0,-.3)} {^{-n}}) ([]!{+(0,.25)} {^{\wD_0}}) -[rr] \bullet -[r] \circ -[r] \cdots ([]!{+(0,-.35)} {\underbrace{\ \quad \qquad \qquad \qquad}_{\ge 0}}) -[r] \circ -[r] \bullet} \tag{I-1}\\ 
\label{I-2} &\xygraph{\circ ([]!{+(0,-.3)} {^{-n}}) ([]!{+(0,.25)} {^{\wD_0}}) -[rr] \circ (- []!{+(1,0.5)} \circ -[r] \cdots ([]!{+(0,-.35)} {\underbrace{\ \quad \qquad \qquad \qquad}_{\ge 0}}) -[r] \circ -[r] \bullet, - []!{+(1,-.5)} \circ -[r] \cdots ([]!{+(0,-.35)} {\underbrace{\ \quad \qquad \qquad \qquad}_{\ge 0}}) -[r] \circ -[r] \bullet)} \tag{I-2}\\ 
\label{II} &
\xygraph{\bullet -[l] \circ -[l] \cdots ([]!{+(0,-.4)} {\underbrace{\ \quad \qquad \qquad \qquad}_{\ge 0}}) -[l] \circ (-[]!{+(-1,-0.5)} \circ , -[]!{+(-1,0.5)} \circ -[ll] \circ ([]!{+(0,-.3)} {^{-n}}) ([]!{+(0,.25)} {^{\wD_0}}))} \tag{II}
\end{align}
Here, each vertex with label $\wD _0$ in the above graphs corresponds to the curve $\wD _0$. 
\end{lem}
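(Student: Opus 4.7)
The plan is to reduce the statement to a standard combinatorial classification of singular fibers of $\bP^1$-fibrations, refined by the hypotheses imposed by $(\ast _n)$. First, I would invoke the classical structure of a singular fiber $\wF _i$ of a $\bP^1$-fibration on a smooth projective surface: $\wF _i$ is a connected tree of smooth rational curves, each with strictly negative self-intersection, and its intersection form is negative semi-definite with kernel spanned by the fiber class. Under the second bullet of $(\ast _n)$ every component of $\wF _i$ is a $(-1)$- or $(-2)$-curve. Since $\wD _0$ is a section, $(\wD _0 \cdot \wF _i) = 1$ forces $\wD _0$ to meet $\wF _i$ transversely at a single point lying on a unique multiplicity-$1$ component of $\wF _i$.

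Next, I would classify the weighted dual graph of $\wF _i$ inductively by contracting $(-1)$-curves of $\wF _i$ one at a time and applying the induction hypothesis to the simpler fiber that results. Because only $(-1)$- and $(-2)$-components may appear, each contracted $(-1)$-curve is either (a) a $(-1)$-leaf whose neighbour is another $(-1)$-curve (base case, yielding \eqref{I-1} with empty middle), (b) a $(-1)$-leaf whose neighbour is a $(-2)$-curve, or (c) an interior valence-$2$ $(-1)$-vertex both of whose neighbours are $(-2)$-curves; any other local picture would produce a $(\ge 0)$-curve in the contracted fiber, which is impossible. Writing out the local fiber relation $a_j m_j = \sum _{k \sim j} m_k$ (with $a_j \in \{1,2\}$) at each vertex then tightly constrains the multiplicities along each chain of $(-2)$-curves and hence the possible tree shapes.

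Finally, I would use the third bullet of $(\ast _n)$ to eliminate the combinatorial configurations that pass the first two bullets but do not match any of the three listed shapes. For instance, a fiber of type $(-2)$--$(-1)$--$(-2)$ with $\wD _0$ glued to a $(-2)$-leaf is combinatorially allowed but not in the list; contracting $\wS$ down to a relatively minimal ruled model and pulling back sections of that model shows that such a shape necessarily leaves on $\wS$ at least one other irreducible curve of self-intersection $\le 2$ that is neither a fiber component nor $\wD _0$, contradicting the third bullet. Ruling out each such spurious configuration this way yields precisely the three shapes \eqref{I-1}, \eqref{I-2}, and \eqref{II}.

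The main obstacle is exactly this last step: the third bullet of $(\ast _n)$ is a global condition on $\wS$ and not a priori visible in the decorated tree $\wF _i + \wD _0$, so one must systematically trace the contraction sequence from $\wS$ to its Hirzebruch minimal model and check, for every tree shape surviving the combinatorial analysis but not appearing in \eqref{I-1}, \eqref{I-2}, \eqref{II}, that a forbidden ``small'' curve on $\wS$ inevitably arises.
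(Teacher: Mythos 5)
Your steps (1)--(2) are essentially a re-derivation of the classification that the paper simply imports from \cite[Lemma 1.5]{Zha88}: a singular fiber all of whose components are $(-1)$- or $(-2)$-curves is either a linear chain with the two $(-1)$-curves at its ends and all multiplicities equal to $1$, or a ``fork'' with a unique $(-1)$-curve of multiplicity $2$, a multiplicity-$2$ chain of $(-2)$-curves, and two multiplicity-$1$ $(-2)$-tips. Combined with the observation that $(\wD_0\cdot\wF_i)=1$ forces the section to meet exactly one component, and that component must have multiplicity $1$, this already yields precisely \eqref{I-1}, \eqref{I-2} and \eqref{II}. So far, so good (modulo the combinatorial details you only sketch).

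Your step (3), however, contains a genuine error. The configuration you single out as ``combinatorially allowed but not in the list'' --- the fiber with dual graph $(-2)$--$(-1)$--$(-2)$, i.e.\ $\wF_i=\wD_{i,0}+\wD_{i,1}+2\wE_i$, with $\wD_0$ meeting one of the multiplicity-one $(-2)$-tips --- is exactly type \eqref{II} in its degenerate form $\gamma_i=2$, where the branch vertex coincides with the $(-1)$-curve. It is not to be ruled out: it genuinely occurs for the surfaces treated later in the paper (e.g.\ $(d,{\rm Dyn}(S))=(3,A_2+2A_1)$ in Figure \ref{A_2+2A_1}, and all rows of Table \ref{table} with $\gamma_1=2$), and \S\S\ref{4-3} is devoted to handling it. Consequently, if you carried out your elimination argument you would be proving a false refinement of the lemma, and in any case no such argument can succeed, because the third bullet of $(\ast_n)$ only constrains irreducible curves \emph{not} contained in the singular fibers (it is used in the paper, e.g., to bound $(\wGamma)^2$ for sections in Lemma \ref{rational curve}); it says nothing about which fiber shapes can occur and cannot exclude any configuration of $(-1)$- and $(-2)$-curves inside a fiber. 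The fix is simple: drop step (3) entirely --- the second bullet of $(\ast_n)$ together with the section argument (or the citation of Zhang's lemma, as the paper does) already gives the three listed graphs, provided you read \eqref{II} as allowing the chain of length zero so that the fork vertex may be the $(-1)$-curve itself.
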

\begin{proof}
By the condition $(\ast _n)$, every singular fiber of $g$ consists only of $(-1)$-curves and $(-2)$-curves. 
Hence, this lemma follows from {\cite[Lemma 1.5]{Zha88}}. 
\end{proof}
From now on, we prepare some notation. 
First, $\wF$ denotes a general fiber of $g$. 
In what follows, we consider the notation for singular fibers of $g$. 
We fix a section $\wD _0$ of $g$ with self-intersection number $-n$. 
Let $r$, $s$ and $t$ be the numbers of singular fibers of $g$ corresponding to (\ref{I-1}), (\ref{I-2}) and (\ref{II}), respectively, and let $\wF _1,\dots ,\wF _{r+s+t}$ be all singular fibers of $g$. 
Here, if $r>0$ (resp. $s>0$, $t>0$), singular fibers $\{ \wF _i\} _{1 \le i \le r}$ (resp. $\{ \wF _{r+i} \} _{1 \le i \le s}$, $\{ \wF _{r+s+i} \} _{1 \le i \le t}$) correspond to (\ref{I-1}) (resp. (\ref{I-2}), (\ref{II})). 

Suppose $r>0$. 
Then, for $i=1,\dots ,r$ let $\{ \wD _{i,\lambda}\} _{0 \le \lambda \le \alpha _i}$ be all irreducible components of $\wF _i$, where we assume that the weighted dual graph of $\wD _0 + \wF _i$ is as follows: 
\begin{align*}
\xygraph{\circ ([]!{+(0,-.3)} {^{-n}}) ([]!{+(0,.25)} {^{\wD_0}}) -[rr] \bullet ([]!{+(0,.25)} {^{\wD_{i,0}}}) - []!{+(1.5,0)} \circ ([]!{+(0,.25)} {^{\wD_{i,1}}}) - []!{+(1.5,0)} \cdots - []!{+(1.5,0)} \circ ([]!{+(0,.25)} {^{\wD_{i,\alpha _i-1}}}) - []!{+(1.5,0)} \bullet([]!{+(0,.25)} {^{\wD_{i,\alpha _i}}})}
\end{align*}
Hence, notice that $\sharp \wF _i = \alpha _i + 1$. Furthermore, we put $\wE _i' := \wD _{i,0}$ and $\wE _i := \wD _{i,\alpha _i}$. 

Suppose $s>0$. 
Then, for $i=1,\dots ,s$ let $\{ \wD _{r+i,\lambda}\} _{0 \le \lambda \le \beta _i + \beta _i'}$ be all irreducible components of $\wF _{r+i}$, where we assume that the weighted dual graph of $\wD _0 + \wF _{r+i}$ is as follows: 
\begin{align*}
\xygraph{\circ ([]!{+(0,-.3)} {^{-n}}) ([]!{+(0,.25)} {^{\wD_0}}) -[rr] \circ ([]!{+(0,.25)} {^{\wD_{r+i,0}}}) (- []!{+(1.5,0.5)} \circ ([]!{+(0,.25)} {^{\wD_{r+i,1}}}) - []!{+(1.5,0)} \cdots - []!{+(1.5,0)} \circ ([]!{+(0,.25)} {^{\wD_{r+i,\beta _i-1}}}) - []!{+(1.5,0)} \bullet ([]!{+(0,.25)} {^{\wD_{r+i,\beta _i}}}), - []!{+(1.5,-.5)} \circ ([]!{+(0,.25)} {^{\wD_{r+i,\beta _i+1}}}) - []!{+(1.5,0)} \cdots - []!{+(1.5,0)} \circ ([]!{+(0,.25)} {^{\wD_{r+i,\beta _i + \beta _i' -1}}}) - []!{+(1.5,0)} \bullet ([]!{+(0,.25)} {^{\wD_{r+i,\beta _i + \beta _i'}}}))}
\end{align*}
and assume further that $\beta _i \ge \beta _i'$. 
Hence, notice that $\sharp \wF _{r+i} = \beta _i + \beta _i' + 1$. Furthermore, we put $\wE _{r+i} := \wD _{r+i,\beta _i}$ and $\wE _{r+i}' := \wD _{r+i,\beta _i + \beta _i'}$. 

Suppose $t>0$. 
Then, for $i=1,\dots ,t$ let $\{ \wD _{r+s+i,\lambda}\} _{0 \le \lambda \le \gamma _i}$ be all irreducible components of $\wF _{r+s+i}$, where we assume that the weighted dual graph of $\wD _0 + \wF _{r+s+i}$ is as follows: 
\begin{align*}
\xygraph{\bullet ([]!{+(0,.25)} {^{\wD_{r+s+i,\gamma _i}}}) - []!{+(-1.5,0)} \circ ([]!{+(0,.25)} {^{\wD_{r+s+i,\gamma _i-1}}}) - []!{+(-1.5,0)} \cdots - []!{+(-1.5,0)} \circ ([]!{+(0,.25)} {^{\wD_{r+s+i,2}}}) (- []!{+(-1.5,-0.5)} \circ ([]!{+(0,.25)} {^{\wD_{r+s+i,1}}}),- []!{+(-1.5,0.5)} \circ ([]!{+(0,.25)} {^{\wD_{r+s+i,0}}}) -[ll] \circ ([]!{+(0,-.3)} {^{-n}}) ([]!{+(0,.25)} {^{\wD_0}}))}
\end{align*}
Hence, notice that $\sharp \wF _{r+s+i} = \gamma _i +1$. Furthermore, we put $\wE _{r+s+i} := \wD _{r+s+i,\gamma _i}$. 

Now, set $\alpha := \sum _{i=1}^r\alpha _i$, $\beta := \sum _{i=1}^s\beta _i$, $\beta ':= \sum _{i=1}^s\beta _i'$ and $\gamma := \sum _{i=1}^t\gamma _i$. 
Then we notice $(-K_{\wS})^2 = 8- (\alpha + \beta + \beta ' + \gamma )$. 
\subsection{Constructions of completed linear systems and smooth rational curves}\label{3-2}
In this subsection, we keep the notation from \S \S \ref{3-1}. 
In particular, $\wS$ is a smooth rational surface with a $\bP ^1$-fibration $g:\wS \to \bP ^1_{\bk}$, which satisfies the condition $(\ast _n)$ for some $n \ge 0$. 
We shall prove Lemmas \ref{linear system} and \ref{rational curve}, which will play an important role in the proof of Lemma \ref{lem(4-2)} later. 
\begin{lem}\label{linear system}
With the same notations as above, the following assertions hold: 
\begin{enumerate}
\item Assuming $s=1$, $t=0$ and $\beta ' = 1$, we set the divisor: 
\begin{align*}
\wDelta ^{(1)} := \wD _0 + n\wF - \sum _{i=1}^r \sum _{\lambda = 1}^{\alpha _i}\lambda \wD _{i,\lambda} + \wE _r - \sum _{\mu = 1}^{\beta}\mu \wD _{r+1,\mu}
\end{align*}
on $\wS$. Then $\dim |\wDelta ^{(1)}| \ge n+2 -\alpha -\beta$. 
\item Assuming $s=1$, $t=0$ and $\beta ' = 1$, we set the divisor: 
\begin{align*}
\wDelta ^{(2)} := \wD _0 + n\wF - \sum _{i=1}^r \sum _{\lambda = 1}^{\alpha _i}\lambda \wD _{i,\lambda} - \wE _{r+1}'
\end{align*}
on $\wS$. Then $\dim |\wDelta ^{(2)}| \ge n -\alpha$. 
\item Assuming $s=1$, $t=0$ and $\beta ' = 1$, we set the divisor: 
\begin{align*}
\wDelta ^{(3)} := (\beta -1)\wD _0 + (\beta -1)n\wF - \sum _{i=1}^r \sum _{\lambda = 1}^{\alpha _i}(\beta -1)\lambda \wD _{i,\lambda} - \sum _{\mu =1}^{\beta}(\beta -2)\mu \wD _{r+1,\mu} -\wE _{r+1}'
\end{align*}
on $\wS$. Then $\dim |\wDelta ^{(3)}| \ge \frac{1}{2}\beta(\beta -1)(n+2 -\alpha - \beta) + (\beta -2)$. 
\item Assuming $s=0$, $t=1$ and $\gamma = 2$, we set the divisor:
\begin{align*}
\wDelta ^{(4)} := \wD _0 + n\wF - \sum _{i=1}^r \sum _{\lambda = 1}^{\alpha _i}\lambda \wD _{i,\lambda} + \wE _r - \sum _{\mu =1}^2\wD _{r+1,\mu}
\end{align*}
on $\wS$. Then $\dim |\wDelta ^{(4)}| \ge n + 1 -\alpha$. 
\item Assuming $s=0$, $t=1$ and $\gamma = 3$, we set the divisor:
\begin{align*}
\wDelta ^{(5)} := 2\wD _0 + 2n\wF - \sum _{i=1}^r \sum _{\lambda = 1}^{\alpha _i} 2\lambda \wD _{i,\lambda} + \wE _r - \sum _{\mu = 1}^3\mu \wD _{r+1,\mu}
\end{align*}
on $\wS$. Then $\dim |\wDelta ^{(5)}| \ge 3n +1 -3\alpha$. 
\item Assume $s=t=0$ and $\alpha \ge n+1$. Letting $r' := \min \{ i \in \{1,\dots ,r\} \,|\, \alpha _1 + \dots + \alpha _i \ge n+1\}$ and $\alpha ' := \alpha _1 + \dots + \alpha _{r'}$, we set the divisor:
\begin{align*}
\wDelta ^{(6)} := \wD _0 + n\wF - \sum _{i=1}^{r'} \sum _{\lambda = 1}^{\alpha _i}\lambda \wD _{i,\lambda} + \sum _{\mu = 1}^{\alpha ' - (n+1)}\mu \wD _{r',(n+1) -(\alpha ' - \alpha _{r'}) +\mu}
\end{align*}
on $\wS$, where we consider $\sum _{\mu = 1}^{\alpha ' - (n+1)}\mu \wD _{r',(n+1) -(\alpha ' - \alpha _{r'}) +\mu} = 0$ if $\alpha ' = n+1$. 
Then $\dim |\wDelta  ^{(6)}| \ge 0$. In particular, $|\wDelta ^{(6)}| \not= \emptyset$. 
\end{enumerate}
\end{lem}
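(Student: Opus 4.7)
The plan is to apply Riemann--Roch to each of the seven divisors $\wDelta^{(k)}$ on the smooth rational surface $\wS$. Since $\wS$ is smooth rational, $\chi(\sO_{\wS}) = 1$ and
\[
\chi(\sO_{\wS}(\wDelta^{(k)})) = 1 + \tfrac{1}{2}\bigl((\wDelta^{(k)})^{2} - \wDelta^{(k)} \cdot K_{\wS}\bigr).
\]
For each $k$ I would first verify $(K_{\wS} - \wDelta^{(k)}) \cdot \wF < 0$, where $\wF$ is a general fiber of $g$. Since $\wF$ is irreducible and moves in a one-parameter family, any effective representative of $K_{\wS} - \wDelta^{(k)}$ must meet a general $\wF$ non-negatively; hence $|K_{\wS} - \wDelta^{(k)}| = \emptyset$, which gives $h^{2}(\wDelta^{(k)}) = 0$ by Serre duality, and consequently $\dim |\wDelta^{(k)}| \ge \chi(\sO_{\wS}(\wDelta^{(k)})) - 1$. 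It then suffices to evaluate $\chi$ and verify that it matches or exceeds the stated bound.

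The intersection computations use only $(\wD_{0})^{2} = -n$, $\wD_{0} \cdot \wF = 1$, $K_{\wS} \cdot \wD_{0} = n-2$, $K_{\wS} \cdot \wF = -2$, the self-intersections and canonical pairings of $(-1)$- and $(-2)$-curves, and the incidence relations recorded in the weighted dual graphs (I-1), (I-2), (II). A convenient telescoping kills most terms: for an interior $(-2)$-curve $\wD_{i,\lambda}$ whose two chain neighbours carry the adjacent coefficients $-(\lambda-1)$ and $-(\lambda+1)$, one has $\wDelta^{(k)} \cdot \wD_{i,\lambda} = (-(\lambda-1))\cdot 1 + (-\lambda)\cdot(-2) + (-(\lambda+1))\cdot 1 = 0$. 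The only non-zero intersection values of $\wDelta^{(k)}$ with irreducible components of $\wS$ therefore arise at $\wD_{0}$, $\wF$, the chain endpoints, and the finitely many components where a correction term (for instance the $+\wE_{r}$ in (2), (5), (6), the $-\wE_{r+1}'$ in (3), (4), or the positive sum in $\wDelta^{(7)}$) breaks the telescoping. These few contributions give a closed-form evaluation of $(\wDelta^{(k)})^{2}$ and $\wDelta^{(k)} \cdot K_{\wS}$. For (1), one finds $(\wDelta^{(1)})^{2} = 4n - 4\alpha - 4\beta - \gamma$ and $\wDelta^{(1)} \cdot K_{\wS} = -2n - 4 + 2\alpha + 2\beta + \gamma$, so $\chi = 3n + 3 - 3\alpha - 3\beta - \gamma$, which is exactly the stated bound plus one.

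The ``moreover'' clause of (1) is the same calculation applied to the integral divisor $\tfrac{1}{2}\wDelta^{(1)}$, which makes sense when $t = 0$ because all relevant coefficients are then even. The bounds in (2)--(6) follow the same recipe; each correction term shifts the intersection with one adjacent chain component by $\pm 1$, and this accounts for the constant perturbations in the stated dimension formulas. The quadratic contribution $\tfrac{1}{2}n(\beta-1)(\beta-2)$ in (4) reflects that the ``backbone'' $\wD_{0} + n\wF$ of $\wDelta^{(4)}$ is scaled by the factor $\beta - 1$, making $(\wDelta^{(4)})^{2}$ depend quadratically on that factor. For (7), again $(K_{\wS} - \wDelta^{(7)}) \cdot \wF = -3 < 0$ gives $h^{2} = 0$; geometrically, $\wDelta^{(7)}$ is the strict transform on $\wS$ of a section of $\bF_{n}$ in $|\hM + n\hF|$ passing through $n+1$ suitably chosen (infinitely near) blowup centers, and since $\dim|\hM + n\hF| = n+1$ such a section always exists, providing a useful sanity check. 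The main technical obstacle is \emph{bookkeeping}: seven slightly different coefficient patterns must be treated, with particular care in the small-case degeneracies ($\alpha_{i} = 1$, small $\beta_{j}$, $\gamma_{k} \in \{2,3\}$) where the telescoping breaks in subtly different ways; no conceptual difficulty beyond this routine but error-prone arithmetic is anticipated.
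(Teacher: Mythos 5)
Your overall strategy is exactly the paper's: Riemann--Roch on the rational surface $\wS$, with $h^2(\wS,\sO_{\wS}(\wDelta^{(k)}))=0$ obtained from Serre duality because the moving general fiber $\wF$ satisfies $(\wF\cdot (K_{\wS}-\wDelta^{(k)}))<0$. Your explicit numbers for (1) are correct ($(\wDelta^{(1)})^2=4n-4\alpha-4\beta-\gamma$, $K_{\wS}\cdot\wDelta^{(1)}=-2n-4+2\alpha+2\beta+\gamma$, hence $\chi-1=3n+2-3\alpha-3\beta-\gamma$), which is precisely the only case the paper writes out; and the same computation does reproduce the stated bounds in (2), (3), (5), (6), (7) and in the ``moreover'' clause of (1).

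However, your blanket claim that (4) also ``follows the same recipe'' is where the proposal breaks: carrying out the bookkeeping gives $(\wDelta^{(4)})^2=n(\beta-1)^2-\alpha(\beta-1)^2-\beta(\beta-2)^2-1$ and $K_{\wS}\cdot\wDelta^{(4)}=(\beta-1)(n-2)-2n(\beta-1)+(\beta-1)\alpha+\beta(\beta-2)+1$, so Riemann--Roch yields $\dim|\wDelta^{(4)}|\ge \frac{1}{2}\beta(\beta-1)(n+2-\alpha-\beta)+(\beta-2)$, which is \emph{not} the stated bound $(\beta-1)(n+2-\alpha-\beta)+\frac{1}{2}n(\beta-1)(\beta-2)$ and is in general strictly smaller (already for $\beta=3$ the two differ by $\alpha$). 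Your heuristic about the ``backbone scaled by $\beta-1$'' cannot close this gap, and in fact the bound printed in (4) appears to be an error in the statement rather than something any correct argument could give: in the configuration $n=2$, $(r,s,t)=(1,1,0)$, $\alpha_1=1$, $(\beta_1,\beta_1')=(3,1)$ (the $A_4$ row of Table \ref{table} with $d=3$) one checks by intersecting with a spanning set of curve classes that $\wDelta^{(4)}\sim -K_{\wS}-\wE_1$, and since $|-K_{\wS}|$ is base point free and maps $\wE_1$ isomorphically to a line of the anticanonical cubic, $h^0(-K_{\wS}-\wE_1)=2$, i.e.\ $\dim|\wDelta^{(4)}|=1$, whereas the stated bound would force $\ge 2$. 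The Riemann--Roch bound is the correct one, and it is all that is needed downstream (in the proof of Lemma \ref{lem(4-2-1)} only $\dim|\wDelta^{(4)}|\ge 0$ is used, under $\alpha+\beta=n+2$ and $\beta\ge 2$, where the Riemann--Roch bound equals $\beta-2\ge 0$); but as written your argument does not prove assertion (4) as stated, and you would have discovered the mismatch only by doing the very arithmetic you defer.
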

\begin{proof}
We only show the assertion (1), the other assertions are similar and left to the reader. 
By the straightforward calculations, we have $(\wDelta ^{(1)} \cdot \wF) = 1$, $(\wDelta ^{(1)})^2 = n - \alpha + 1 - \beta$ and $(\wDelta ^{(1)} \cdot -K_{\wS}) = n+2 - \alpha + 1 - \beta$. 
Since $(\wF \cdot K_{\wS} - \wDelta ^{(1)}) = -3 < 0$, we obtain $h^2(\wS ,\sO _{\wS}(\wDelta ^{(1)})) = 0$ by the Serre duality theorem. 
Notice that $\chi (\wS , \sO _{\wS}) = 1$ because $\wS$ is a smooth rational surface. 
Hence, we have: 
\begin{align*}
\dim |\wDelta ^{(1)}| = h^0(\wS ,\sO _{\wS}(\wDelta ^{(1)})) -1 \ge \chi (\wS ,\sO _{\wS}(\wDelta ^{(1)})) -1 = \frac{1}{2}(\wDelta ^{(1)} \cdot \wDelta ^{(1)} - K_{\wS}) 
= n+2-\alpha -\beta
\end{align*}
by the Riemann-Roch theorem. 
\end{proof}
\begin{rem}
With the same notations as above, we assume further $(-K_{\wS})^2 \ge 5-n$. 
For the reader's convenience, we present figures of general members in linear systems as in Lemma \ref{linear system} according to each assertion: 
\begin{itemize}
\item In the situation of Lemma \ref{linear system} (1)--(2), the configuration of a general member $\wC \in |\wDelta ^{(1)}|$ (resp. $\wC \in |\wDelta ^{(2)}|$) is given as Figure \ref{C} (a) (resp. (b)). 
\item In the situation of Lemma \ref{linear system} (3), for instance when $\beta = 4$, the configuration of a general member $\wC \in |\wDelta ^{(3)}|$ is given as Figure \ref{C} (c). 
\item In the situation of Lemma \ref{linear system} (4)--(5), the configuration of a general member $\wC \in |\wDelta ^{(4)}|$, (resp. $\wC \in |\wDelta ^{(5)}|$) is given as Figure \ref{C} (d) (resp. (e)). 
\item In the situation of Lemma \ref{linear system} (6), for instance when $n=3$, $\alpha = 6$ and $(\alpha _1, \alpha _2 , \alpha _3) = (3,2,1)$, the configuration of a general member $\wC \in |\wDelta ^{(6)}|$ is given as Figure \ref{C} (f).
\end{itemize}
\end{rem}
\begin{figure}[h]
\begin{minipage}[c]{1\hsize}\begin{center}\scalebox{0.7}{\begin{tikzpicture}
\draw [thick] (-1,0)--(7,0);
\node at (-1.4,0) {$\wD _0$};
\node at (-2,6.3) {\Large (a)};
\node at (1.5,3) {\Large $\cdots \cdots$};
\draw [very thick] (-1,5.5)--(3.5,5.5);
\draw [very thick] (4.25,4.5)--(5.25,4.5);
\draw [very thick] (6,5.7)--(7,5.7);
\draw [very thick] (3.5,5.5) .. controls (4,5.5) and (4,4.5) .. (4.25,4.5);
\draw [very thick] (5.25,4.5) .. controls (5.5,4.5) and (5.5,5.7) .. (6,5.7);
\node at (-1.2,5.5) {$\wC$};
\node at (0,-1.5) {};

\draw [dashed] (0.5,-0.25)--(0,1);
\draw (0,0.75)--(0.5,2);
\draw (0.5,1.75)--(0,3);
\draw (0,3.75)--(0.5,5);
\draw [dashed] (0.5,4.75)--(0,6);
\node at (0,3.5) {$\vdots$};
\node at (0.5,-0.6) {\footnotesize $\wE _1'$};
\node at (0,6.3) {\footnotesize $\wE _1$};

\draw [dashed] (3,-0.25)--(2.5,1);
\draw (2.5,0.75)--(3,2);
\draw (3,1.75)--(2.5,3);
\draw (2.5,3.75)--(3,5);
\draw [dashed] (3,4.75)--(2.5,6);
\node at (2.5,3.5) {$\vdots$};
\node at (3,-0.6) {\footnotesize $\wE _{r-1}'$};
\node at (2.5,6.3) {\footnotesize $\wE _{r-1}$};

\draw [dashed] (5,-0.25)--(4.5,1);
\draw (4.5,0.75)--(5,2);
\draw (5,1.75)--(4.5,3);
\draw (4.5,3.75)--(5,5);
\draw [dashed] (5,4.75)--(4.5,6);
\node at (4.5,3.5) {$\vdots$};
\node at (5,-0.6) {\footnotesize $\wE _r'$};
\node at (4,4.2) {\footnotesize $\wD _{r,\alpha _r-1}$};
\node at (4.5,6.3) {\footnotesize $\wE _r$};

\draw (6,-0.25)--(7,1.5);
\draw [dashed] (7,0.25)--(6,1);
\draw (7,1.25)--(6,3);
\draw (6,3.75)--(7,5.5);
\draw [dashed] (7,4.75)--(6,6);
\node at (6,3.4) {$\vdots$};
\node at (6,-0.6) {\footnotesize $\wD_ {r+1,0}$};
\node at (6,1.3) {\footnotesize $\wE _{r+1}'$};
\node at (6,6.3) {\footnotesize $\wE _{r+1}$};
\end{tikzpicture}
\qquad
\begin{tikzpicture}
\draw [thick] (-1,0)--(7,0);
\node at (-1.4,0) {$\wD _0$};
\node at (-2,6.3) {\Large (b)};
\node at (1.5,3) {\Large $\cdots \cdots$};
\draw [very thick] (-1,5.5)--(3.5,5.5);
\draw [very thick] (4.5,4.5)--(4.5,0.5);
\draw [very thick] (3.5,5.5) .. controls (4.5,5.5) .. (4.5,4.5);
\node at (-1.4,5.5) {$\wC$};
\node at (0,-1.5) {};

\draw [dashed] (0.5,-0.25)--(0,1);
\draw (0,0.75)--(0.5,2);
\draw (0.5,1.75)--(0,3);
\draw (0,3.75)--(0.5,5);
\draw [dashed] (0.5,4.75)--(0,6);
\node at (0,3.5) {$\vdots$};
\node at (0.5,-0.6) {\footnotesize $\wE _1'$};
\node at (0,6.3) {\footnotesize $\wE _1$};

\draw [dashed] (3,-0.25)--(2.5,1);
\draw (2.5,0.75)--(3,2);
\draw (3,1.75)--(2.5,3);
\draw (2.5,3.75)--(3,5);
\draw [dashed] (3,4.75)--(2.5,6);
\node at (2.5,3.5) {$\vdots$};
\node at (3,-0.6) {\footnotesize $\wE _r'$};
\node at (2.5,6.3) {\footnotesize $\wE _r$};

\draw (5,-0.25)--(6,1.5);
\draw [dashed] (6,0.25)--(4,1.75);
\draw (6,1.25)--(5,3);
\draw (5,3.75)--(6,5.5);
\draw [dashed] (6,4.75)--(5,6);
\node at (5,3.4) {$\vdots$};
\node at (5,-0.6) {\footnotesize $\wD _{r+1,0}$};
\node at (6.3,0.5) {\footnotesize $\wE _{r+1}'$};
\node at (5,6.3) {\footnotesize $\wE _{r+1}$};
\end{tikzpicture}}\end{center}\end{minipage}

\begin{minipage}[c]{1\hsize}\begin{center}\scalebox{0.7}{\begin{tikzpicture}
\draw [thick] (-1,0)--(7,0);
\node at (-1.4,0) {$\wD _0$};
\node at (-2,6.3) {\Large (c)};
\node at (1.5,3) {\Large $\cdots \cdots$};
\draw [very thick] (-0.5,4.7)--(3.5,4.7);
\draw [very thick] (-0.5,5.1)--(6.25,5.1);
\draw [very thick] (-0.5,5.5)--(6.25,5.5);
\draw [very thick] (-0.5,4.7) .. controls (-0.75,4.7) and (-0.75,5.1) .. (-0.5,5.1);
\draw [very thick] (6.25,5.5) .. controls (6.5,5.5) and (6.5,5.1) .. (6.25,5.1);
\draw [very thick] (4.5,3.7)--(4.5,0.5);
\draw [very thick] (3.5,4.7) .. controls (4.5,4.7) .. (4.5,3.7);
\node at (-1.2,4.9) {$\wC$};
\node at (0,-1.5) {};

\draw [dashed] (0.5,-0.25)--(0,1);
\draw (0,0.75)--(0.5,1.75);
\draw (0.5,1.5)--(0,2.5);
\draw (0,3.25)--(0.5,4.25);
\draw [dashed] (0.5,3.75)--(0,6);
\node at (0,3) {$\vdots$};
\node at (0.5,-0.6) {\footnotesize $\wE _1'$};
\node at (0,6.3) {\footnotesize $\wE _1$};

\draw [dashed] (3,-0.25)--(2.5,1);
\draw (2.5,0.75)--(3,1.75);
\draw (3,1.5)--(2.5,2.5);
\draw (2.5,3.25)--(3,4.25);
\draw [dashed] (3,3.75)--(2.5,6);
\node at (2.5,3) {$\vdots$};
\node at (3,-0.6) {\footnotesize $\wE _r'$};
\node at (2.5,6.3) {\footnotesize $\wE _r$};

\draw (5,-0.25)--(6,1);
\draw [dashed] (6,0.25)--(4,1.75);
\draw (6,0.75)--(5,2);
\draw (5,1.75)--(6,3);
\draw (6,2.75)--(5,4);
\draw [dashed] (5,3.75)--(6,6);
\node at (5,-0.6) {\footnotesize $\wD _{r+1,0}$};
\node at (6.5,0.3) {\footnotesize $\wE _{r+1}'$};
\node at (6,6.3) {\footnotesize $\wE _{r+1}$};
\end{tikzpicture}
\qquad
\begin{tikzpicture}
\draw [thick] (-1,0)--(7,0);
\node at (-1.4,0) {$\wD _0$};
\node at (-2,6.3) {\Large (d)};
\node at (1.5,3) {\Large $\cdots \cdots$};
\draw [very thick] (-1,5.5)--(4,5.5);
\draw [very thick] (5.75,5.5)--(7,5.5);
\draw [very thick] (4.5,4.5)--(5.25,4.5);
\draw [very thick] (4,5.5) .. controls (4.25,5.5) and (4.25,4.5) .. (4.5,4.5);
\draw [very thick] (5.25,4.5) .. controls (5.5,4.5) and (5.5,5.5) .. (5.75,5.5);
\node at (-1.4,5.5) {$\wC$};
\node at (0,-1.5) {};

\draw [dashed] (0.5,-0.25)--(0,1);
\draw (0,0.75)--(0.5,2);
\draw (0.5,1.75)--(0,3);
\draw (0,3.75)--(0.5,5);
\draw [dashed] (0.5,4.75)--(0,6);
\node at (0,3.5) {$\vdots$};
\node at (0.5,-0.6) {\footnotesize $\wE _1'$};
\node at (0,6.3) {\footnotesize $\wE _1$};

\draw [dashed] (3,-0.25)--(2.5,1);
\draw (2.5,0.75)--(3,2);
\draw (3,1.75)--(2.5,3);
\draw (2.5,3.75)--(3,5);
\draw [dashed] (3,4.75)--(2.5,6);
\node at (2.5,3.5) {$\vdots$};
\node at (3,-0.6) {\footnotesize $\wE _{r-1}'$};
\node at (2.5,6.3) {\footnotesize $\wE _{r-1}$};

\draw [dashed] (5,-0.25)--(4.5,1);
\draw (4.5,0.75)--(5,2);
\draw (5,1.75)--(4.5,3);
\draw (4.5,3.75)--(5,5);
\draw [dashed] (5,4.75)--(4.5,6);
\node at (4.5,3.5) {$\vdots$};
\node at (5,-0.6) {\footnotesize $\wE _r'$};
\node at (4,4.2) {\footnotesize $\wD _{r,\alpha _r-1}$};
\node at (4.5,6.3) {\footnotesize $\wE _r$};

\draw (6,-0.25)--(7,2);
\draw [dashed] (6.8,0.9)--(6.8,4.85);
\draw (7,3.75)--(6,6);
\node at (6,-0.6) {\footnotesize $\wD _{r+1,0}$};
\node at (6,6.3) {\footnotesize $\wD _{r+1,1}$};
\node at (6.4,3.3) {\footnotesize $\wE _{r+1}$};
\end{tikzpicture}}\end{center}\end{minipage}

\begin{minipage}[c]{1\hsize}\begin{center}\scalebox{0.7}{\begin{tikzpicture}
\draw [thick] (-1,0)--(7,0);
\node at (-1.4,0) {$\wD _0$};
\node at (-2,6.3) {\Large (e)};
\node at (1.5,3) {\Large $\cdots \cdots$};
\draw [very thick] (-0.5,5.75)--(5.25,5.75);
\draw [very thick] (-0.5,5.25)--(4,5.25);
\draw [very thick] (5.75,5.25)--(5.75,2.5);
\draw [very thick] (5.25,4.25)--(5.25,2.5);
\draw [very thick] (5.25,5.75) .. controls (5.75,5.75) .. (5.75,5.25);
\draw [very thick] (-0.5,5.75) .. controls (-0.75,5.75) and (-0.75,5.25) .. (-0.5,5.25);
\draw [very thick] (5.25,2.5) .. controls (5.25,2) and (5.75,2) .. (5.75,2.5);
\draw [very thick] (5,4.5) .. controls (5.25,4.5) .. (5.25,4.25);
\draw [very thick] (4.5,4.5)--(5,4.5);
\draw [very thick] (4,5.25) .. controls (4.25,5.25) and (4.25,4.5) .. (4.5,4.5);
\node at (-1.2,5.5) {$\wC$};

\draw [dashed] (0.5,-0.25)--(0,1);
\draw (0,0.75)--(0.5,2);
\draw (0.5,1.75)--(0,3);
\draw (0,3.75)--(0.5,5);
\draw [dashed] (0.5,4.75)--(0,6);
\node at (0,3.5) {$\vdots$};
\node at (0.5,-0.6) {\footnotesize $\wE _1'$};
\node at (0,6.3) {\footnotesize $\wE _1$};

\draw [dashed] (3,-0.25)--(2.5,1);
\draw (2.5,0.75)--(3,2);
\draw (3,1.75)--(2.5,3);
\draw (2.5,3.75)--(3,5);
\draw [dashed] (3,4.75)--(2.5,6);
\node at (2.5,3.5) {$\vdots$};
\node at (3,-0.6) {\footnotesize $\wE _{r-1}'$};
\node at (2.5,6.3) {\footnotesize $\wE _{r-1}$};

\draw [dashed] (5,-0.25)--(4.5,1);
\draw (4.5,0.75)--(5,2);
\draw (5,1.75)--(4.5,3);
\draw (4.5,3.75)--(5,5);
\draw [dashed] (5,4.75)--(4.5,6);
\node at (4.5,3.5) {$\vdots$};
\node at (5,-0.6) {\footnotesize $\wE _r'$};
\node at (4,4.1) {\footnotesize $\wD _{r,\alpha _r-1}$};
\node at (4.5,6.3) {\footnotesize $\wE _r$};

\draw (6,-0.25)--(7,2);
\draw (6.7,0.9)--(6.7,4.85);
\draw (7,3.75)--(6,6);
\draw [dashed] (5.6,3)--(7,3);
\node at (6,-0.6) {\footnotesize $\wD _{r+1,0}$};
\node at (6,6.3) {\footnotesize $\wD _{r+1,1}$};
\node at (7,5.1) {\footnotesize $\wD _{r+1,2}$};
\node at (6.3,2.7) {\footnotesize $\wE _{r+1}$};
\end{tikzpicture}
\qquad
\begin{tikzpicture}
\draw [thick] (-1,0)--(7,0);
\node at (-1.4,0) {$\wD _0$};
\node at (-2,6.3) {\Large (f)};
\draw [very thick] (-1,5.5)--(1.5,5.5);
\draw [very thick] (2,3)--(4,3);
\draw [very thick] (4.5,1.5)--(7,1.5);
\draw [very thick] (1.5,5.5) .. controls (1.75,5.5) and (1.75,3) .. (2,3);
\draw [very thick] (4,3) .. controls (4.25,3) and (4.25,1.5) .. (4.5,1.5);
\node at (-1.4,5.5) {$\wC$};

\draw [dashed] (1,-0.25)--(0,1.5);
\draw (0,1.25)--(1,3);
\draw (1,2.75)--(0,4.5);
\draw [dashed] (0,4.25)--(1,6);
\node at (1,-0.6) {\footnotesize $\wE _1'$};
\node at (0.8,1.75) {\footnotesize $\wD _{1,1}$};
\node at (0.8,4) {\footnotesize $\wD _{1,2}$};
\node at (1,6.3) {\footnotesize $\wE _1$};

\draw [dashed] (3.5,-0.25)--(2.5,1.5);
\draw (2.7,0.75)--(2.7,5);
\draw [dashed] (2.5,4.55)--(3.5,6);
\node at (3.5,-0.6) {\footnotesize $\wE _2'$};
\node at (3.2,3.75) {\footnotesize $\wD _{2,1}$};
\node at (3.5,6.3) {\footnotesize $\wE _2$};

\draw [dashed] (6,-0.25)--(5,3.25);
\draw [dashed] (5,2.5)--(6,6);
\node at (6,-0.6) {\footnotesize $\wE _3'$};
\node at (6,6.3) {\footnotesize $\wE _3$};
\end{tikzpicture}}
\caption{Configurations of $\wC$ in Lemma \ref{linear system}}\label{C}
\end{center}\end{minipage}
\end{figure}
\begin{lem}\label{rational curve}
With the same notation as above, the following assertions hold: 
\begin{enumerate}
\item Assume $s=1$, $t=0$, $\beta ' = 1$ and $(-K_{\wS})^2 = 5-n$. Then there exists a $(-1)$-curve $\wGamma$ on $\wS$ such that $\wGamma \sim \wD _0 + n\wF - \sum _{i=1}^r \sum _{\lambda =1}^{\alpha _i}\lambda \wD _{i,\lambda} -\sum _{\mu =1}^{\beta} \mu \wD _{r+1,\mu} + \wE _{r+1}$. 
\item Assume $s=0$, $t=1$, $\gamma \in \{ 2,3\}$ and $(-K_{\wS})^2 = 5-n$. Then there exists a $(-1)$-curve $\wGamma$ on $\wS$ such that $\wGamma \sim \wD _0 + n\wF - \sum _{i=1}^r \sum _{\lambda = 1}^{\alpha _i} \lambda \wD _{i,\lambda} - (\gamma -2)\sum _{\mu =1}^{\gamma}\wD _{r+1,\mu}$. 
\item Assume $s=t=0$ and $(-K_{\wS})^2 \in \{ 6-n,\ 5-n\}$. Then there exists a curve $\wGamma$ on $\wS$ such that $\wGamma \simeq \bP ^1_{\bk}$ and $\wGamma \sim \wD _0 + (n+1)\wF - \sum _{i=1}^r \sum _{\lambda = 1}^{\alpha _i} \lambda \wD _{i,\lambda}$. 
\end{enumerate}
\end{lem}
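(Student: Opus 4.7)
The plan is to deduce all three assertions from Lemma~\ref{lem(2-2)} via explicit intersection-theoretic verifications, handling separately the one sub-case of (3) where the target class has self-intersection $0$ by a Riemann--Roch argument. First I would unpack the degree hypothesis using the formula $(-K_{\wS})^{2} = 8 - (\alpha + \beta + \beta' + \gamma)$ from \S\ref{3-1}: this forces $\alpha + \beta = n + 2$ in (1), $\alpha + \gamma = n + 3$ in (2), and $\alpha \in \{n+2, n+3\}$ in (3).

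Next I would carry out the direct intersection computations using $\wD_{0}^{2} = -n$, $\wD_{0} \cdot \wF = 1$, $\wF^2 = 0$, $K_{\wS} \cdot \wD_{0} = n - 2$ (adjunction on $\wD_0 \simeq \bP^1_{\bk}$), $K_{\wS} \cdot \wF = -2$, and $\wD_{0} \cdot \wD_{i,\lambda} = \wF \cdot \wD_{i,\lambda} = 0$ for $\lambda \ge 1$, together with the telescoping identities
\[
\Bigl(\sum_{\lambda = 1}^{\alpha_i} \lambda\, \wD_{i,\lambda}\Bigr)^{2} = -\alpha_i, \qquad K_{\wS} \cdot \sum_{\lambda = 1}^{\alpha_i} \lambda\, \wD_{i,\lambda} = -\alpha_i,
\]
on each (I-1) chain (only the endpoint $(-1)$-curve $\wE_i$ contributes to the second identity), and analogous identities on the (I-2) and (II) fibers. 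Together with the numerical constraints above, this gives $(\wGamma)^{2} = \wGamma \cdot K_{\wS} = -1$ in (1), (2), and in (3) when $(-K_{\wS})^{2} = 5 - n$, and $(\wGamma)^{2} = 0$, $\wGamma \cdot K_{\wS} = -2$ in the remaining sub-case of (3).

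To apply Lemma~\ref{lem(2-2)} in the ``$(-1)$-cases'', I would check $\wGamma \cdot \wR \ge 0$ for every $(-2)$-curve $\wR$ on $\wS$: by $(\ast_{n})$, $\wR$ is an irreducible component of some singular fiber, so the check is local on that fiber. For a $(-2)$-curve in the interior of a chain with neighbors of coefficients $\lambda - 1$ and $\lambda + 1$ in $\wGamma$, the three-term identity $(\lambda - 1) - 2\lambda + (\lambda + 1) = 0$ gives vanishing; at the boundary components (e.g.\ $\wD_{i,\alpha_i - 1}$) and at the junction components (e.g.\ $\wD_{r+1,0}$ for (1) and $\wD_{r+1,2}$ for (2)), a short local computation, using $\wD_{0} \cdot \wD_{r+1,0} = 1$, yields a value in $\{0, 1\}$. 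Lemma~\ref{lem(2-2)} then furnishes the desired $(-1)$-curve in each of these cases.

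For the remaining sub-case of (3), Riemann--Roch gives $\chi(\sO_{\wS}(\wGamma)) = 2$, and since $(K_{\wS} - \wGamma) \cdot \wF = -3 < 0$, Serre duality forces $h^{2}(\sO_{\wS}(\wGamma)) = 0$, so $|\wGamma| \ne \emptyset$; since $\wGamma \cdot \wF = 1$, any irreducible member of the pencil is a section of $g$, hence smooth rational, and I would extract $\wGamma \simeq \bP^{1}_{\bk}$ either via Bertini or, equivalently, by taking the proper transform of a smooth rational $G \sim M + (n+1)F$ on $\bF_{n}$ through the birational contraction $\wS \to \bF_{n}$ that collapses each (I-1) chain onto a smooth fiber. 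The main obstacle in this plan is the case-by-case bookkeeping involved in verifying $\wGamma \cdot \wR \ge 0$, which must be redone separately across the three fiber configurations (I-1), (I-2), (II) and their boundary/junction components.
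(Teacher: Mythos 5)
Your numerical computations are correct (the class in each case does have $(\wGamma)^2=(\wGamma\cdot K_{\wS})=-1$, resp.\ $(\wGamma)^2=0$, $(\wGamma\cdot K_{\wS})=-2$ when $\alpha=n+2$ in (3), and the three-term checks against the fiber components go through), but the central step is not justified: Lemma~\ref{lem(2-2)} is a statement about \emph{weak del Pezzo surfaces}, while in Lemma~\ref{rational curve} the surface $\wS$ is only a smooth rational surface carrying a $\bP^1$-fibration with $(\ast_n)$ and $(-K_{\wS})^2\in\{5-n,6-n\}$. Nothing in these hypotheses makes $-K_{\wS}$ nef: already $(-K_{\wS}\cdot\wD_0)=2-n<0$ as soon as $n\ge 3$ (e.g.\ blow up $\bF_3$ at six points on distinct fibers off the minimal section: $(\ast_3)$ and $(-K_{\wS})^2=2=5-3$ hold, yet $\wS$ is not weak del Pezzo), and the condition $(\ast_n)$ does not constrain horizontal curves of self-intersection $\ge -1$ and positive genus either. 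Since Theorem~\ref{thm(4)} and \S\S\ref{4-2}--\ref{4-4} invoke this lemma for arbitrary $n$, you cannot restrict to the weak del Pezzo case; you would have to replace Lemma~\ref{lem(2-2)} by an argument that works under $(\ast_n)$ alone (for instance, effectivity via Riemann--Roch, then an analysis of an effective member as ``one section plus vertical part'' using $(\wGamma\cdot\wF)=1$ and the fiber structure), which is precisely the kind of work your sketch omits. Note also that, by $(\ast_n)$, a $(-2)$-curve need not lie in a fiber: $\wD_0$ itself is one when $n=2$, so it must be included in the check (the value is $\ge 0$, but the claim ``every $(-2)$-curve is a fiber component'' is not what $(\ast_n)$ says).

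The sub-case of (3) with $(-K_{\wS})^2=6-n$ is also incomplete. Riemann--Roch gives $|\wGamma|\neq\emptyset$, but effectivity does not produce an \emph{irreducible} member, and Bertini is not available without controlling fixed components and base points. The alternative you propose is off the mark: contracting each (I-1) chain onto a smooth fiber so as to land in $\bF_n$ sends a smooth rational $G\sim \hM+(n+1)\hF$ \emph{avoiding the centers} to the class $\wD_0+(n+1)\wF$, not to $\wD_0+(n+1)\wF-\sum_{i}\sum_{\lambda}\lambda\wD_{i,\lambda}$; to hit the stated class, $G$ would have to pass through the whole tower of $\alpha_i$ infinitely near points over each center, and one must then argue existence and irreducibility of such a member. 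This is exactly the difficulty the paper's proof sidesteps: it contracts each chain from the other end (the component meeting $\wD_0$ first), so that $(h_\ast(\wD_0))^2\in\{2,3\}$ and $\hS\simeq\bF_0$ or $\bF_1$, chooses $\hGamma$ there disjoint from all centers (or the minimal section in the $\bF_1$ case), and uses the last condition of $(\ast_n)$ to pin down $(\wGamma)^2$ for the proper transform --- an argument that needs no nefness of $-K_{\wS}$ and covers all three assertions uniformly. As it stands, your proposal has a genuine gap in both the appeal to Lemma~\ref{lem(2-2)} and the square-zero case of (3).
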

\begin{proof}
In (1), let $h:\wS \to \hS$ be a of sequence of contractions of $(-1)$-curves and subsequently (smoothly) contractible curves in:
\begin{align*}
\left( \bigsqcup _{i=1}^r\Supp \left( \wE _i' + \sum _{\lambda = 1}^{\alpha _i-1} \wD _{i,\lambda} \right) \right) \sqcup \Supp \left( \wE _{r+1}' + \sum _{\mu = 0}^{\beta -2}\wD _{r+1,\mu} \right) \sqcup \Supp (\wE _{r+1} ),
\end{align*}
where we consider $\sum _{\mu = 0}^{\beta -2}\wD _{r+1,\mu} := 0$ if $\beta =1$. 
By construction of $h$, we know $(h_{\ast}(\wD _0))^2 = -n + \alpha + (\beta -1) = 1$ by virtue of $(-K_{\wS})^2 = 5-n = 8-(\alpha + \beta + 1)$. 
Hence, $\hS \simeq \bF _1$; in particular, there exists a unique $(-1)$-curve $\hGamma$ on $\hS$ further satisfying $(\hGamma \cdot h_{\ast}(\wD _0)) = 0$. 
We put $\wGamma := h^{-1}_{\ast}(\hGamma )$. 
Since $\wGamma$ is a section of $g$, we notice $(\wGamma )^2 \ge -1$ by the last condition of $(\ast _n)$. 
Hence, $\wGamma$ is a $(-1)$-curve because of $-1 = (\hGamma )^2 \ge (\wGamma )^2 \ge -1$. 
By construction of $h$, we easily check $\wGamma \sim \wD _0 + n\wF - \sum _{i=1}^r \sum _{\lambda =1}^{\alpha _i}\lambda \wD _{i,\lambda} -\sum _{\mu =1}^{\beta} \mu \wD _{r+1,\mu} + \wE _{r+1}$. 

In (2), let $h:\wS \to \hS$ be a sequence of contractions of $(-1)$-curves and subsequently (smoothly) contractible curves in:
\begin{align*}
\left( \bigsqcup _{i=1}^r\Supp \left( \wE _i' + \sum _{\lambda = 1}^{\alpha _i-1} \wD _{i,\lambda} \right) \right) \sqcup \Supp \left( \wE _{r+1} + \wD _{r+1,\gamma -1} + \wD _{r+1,\gamma -3} \right), 
\end{align*}
where we consider $\wD _{r+1,\gamma -3} := 0$ if $\gamma = 2$. 
By construction of $h$, we know $(h_{\ast}(\wD _0))^2 = -n + \alpha + (\gamma -2) = 1$ by virtue of $(-K_{\wS})^2 = 5-n = 8-(\alpha + \gamma )$. 
Hence, $\hS \simeq \bF _1$; in particular, there exists a unique $(-1)$-curve $\hGamma$ on $\hS$ further satisfying $(\hGamma \cdot h_{\ast}(\wD _0)) = 0$. 
Then $\wGamma := h^{-1}_{\ast}(\hGamma )$ is a $(-1)$-curve satisfying $\wGamma \sim \wD _0 + n\wF - \sum _{i=1}^r \sum _{\lambda = 1}^{\alpha _i} \lambda \wD _{i,\lambda} - (\gamma -2) \sum _{\mu =1}^{\gamma}\wD _{r+1,\mu}$ by a similar argument to (1). 

In (3), let $h:\wS \to \hS$ be a sequence of contractions of $(-1)$-curves and subsequently (smoothly) contractible curves in:
\begin{align*}
\bigsqcup _{i=1}^r\Supp \left( \wE _i' + \sum _{\lambda = 1}^{\alpha _i-1} \wD _{i,\lambda} \right) .
\end{align*}
By construction of $h$, either $\hS \simeq \bF _0$ or $\hS \simeq \bF _1$ since all irreducible curves on $\hS$ have self-intersection number $\ge -1$. 
Moreover, we know $(h_{\ast}(\wD _0))^2 = -n + \alpha \in \{2,3\}$ by virtue of $(-K_{\wS})^2 = 8-\alpha \in \{ 6-n,5-n\}$. 
Now, we consider two cases $(h_{\ast}(\wD _0))^2 = 3$ and $(h_{\ast}(\wD _0))^2 = 2$ separately. 

Assume $(h_{\ast}(\wD _0))^2 = 3$. 
Then $\hS \simeq \bF _1$. 
Hence, there exists a unique $(-1)$-curve $\hGamma$ on $\hS$ further satisfying $(\hGamma \cdot h_{\ast}(\wD _0)) = 1$. 
Then $\wGamma := h^{-1}_{\ast}(\hGamma )$ is a $(-1)$-curve satisfying $\wGamma \sim \wD _0 + (n+1)\wF - \sum _{i=1}^r \sum _{\lambda = 1}^{\alpha _i} \lambda \wD _{i,\lambda}$ by a similar argument to (1). 

Assume $(h_{\ast}(\wD _0))^2 = 2$. 
Then $\hS \simeq \bF _0$. 
Hence, we take a $0$-curve $\hGamma$ on $\hS$ such that $(\hGamma \cdot h_{\ast}(\wD _0)) = (\hGamma \cdot h_{\ast}(\wF )) = 1$ and $h (\wE _i') \not\in \hGamma$ for $i=1,\dots ,r$. 
Then $\wGamma := h^{-1}_{\ast}(\hGamma )$ is a $0$-curve satisfying $\wGamma \sim \wD _0 + (n+1)\wF - \sum _{i=1}^r \sum _{\lambda = 1}^{\alpha _i} \lambda \wD _{i,\lambda}$. 
\end{proof}
\begin{rem}
For the reader's convenience, we present figures of the smooth rational curve $\wGamma$ according to each assertion of Lemma \ref{rational curve}: 
\begin{itemize}
\item In the situation of Lemma \ref{rational curve} (1), the configuration of $\wGamma \subseteq \wS$ is given as Figure \ref{Gamma} (a). 
\item In the situation of Lemma \ref{rational curve} (2) with $\gamma = 2$ (resp. $\gamma = 3$), the configuration of $\wGamma \subseteq \wS$ is given as Figure \ref{Gamma} (b) (resp. (c)). 
\item In the situation of Lemma \ref{rational curve} (3), the configuration of $\wGamma \subseteq \wS$ is given as Figure \ref{Gamma} (d). 
\end{itemize}
\end{rem}
\begin{figure}[t]
\begin{minipage}[c]{1\hsize}\begin{center}\scalebox{0.7}{\begin{tikzpicture}
\draw [thick] (-1,0)--(7,0);
\node at (-1.4,0) {$\wD _0$};
\node at (-2,6.3) {\Large (a)};
\node at (1.75,3) {\Large $\cdots \cdots$};
\draw [very thick] (-1,5.5)--(4.5,5.5);
\draw [very thick] (5,4.5)--(7,4.5);
\draw [very thick] (4.5,5.5) .. controls (4.75,5.5) and (4.75,4.5) .. (5,4.5);
\node at (-1.4,5.5) {$\wGamma$};
\node at (0,-1.5) {};

\draw [dashed] (0.5,-0.25)--(0,1);
\draw (0,0.75)--(0.5,2);
\draw (0.5,1.75)--(0,3);
\draw (0,3.75)--(0.5,5);
\draw [dashed] (0.5,4.75)--(0,6);
\node at (0,3.5) {$\vdots$};
\node at (0.5,-0.6) {\footnotesize $\wE _1'$};
\node at (0.7,1.25) {\footnotesize $\wD _{1,1}$};
\node at (-0.1,2.25) {\footnotesize $\wD _{1,2}$};
\node at (-0.5,4.3) {\footnotesize $\wD _{1,\alpha _1-1}$};
\node at (0,6.3) {\footnotesize $\wE _1$};

\draw [dashed] (4,-0.25)--(3.5,1);
\draw (3.5,0.75)--(4,2);
\draw (4,1.75)--(3.5,3);
\draw (3.5,3.75)--(4,5);
\draw [dashed] (4,4.75)--(3.5,6);
\node at (3.5,3.5) {$\vdots$};
\node at (4,-0.6) {\footnotesize $\wE _r'$};
\node at (4.2,1.25) {\footnotesize $\wD _{r,1}$};
\node at (3.4,2.25) {\footnotesize $\wD _{r,2}$};
\node at (3,4.3) {\footnotesize $\wD _{r,\alpha _r-1}$};
\node at (3.5,6.3) {\footnotesize $\wE _r$};

\draw (5.5,-0.25)--(6.5,1.5);
\draw [dashed] (6.5,0.25)--(5.5,1);
\draw (6.5,1.25)--(5.5,3);
\draw (5.5,3.75)--(6.5,5.5);
\draw [dashed] (6.5,4.75)--(5.5,6);
\node at (5.5,3.4) {$\vdots$};
\node at (5.5,-0.6) {\footnotesize $\wD_ {r+1,0}$};
\node at (6.7,0.6) {\footnotesize $\wE _{r+1}'$};
\node at (6.5,2.5) {\footnotesize $\wD _{r+1,1}$};
\node at (6.5,4) {\footnotesize $\wD _{r+1,\beta -1}$};
\node at (5.5,6.3) {\footnotesize $\wE _{r+1}$};
\end{tikzpicture}
\qquad
\begin{tikzpicture}
\draw [thick] (-1,0)--(7,0);
\node at (-1.4,0) {$\wD _0$};
\node at (-2,6.3) {\Large (b)};
\node at (1.75,3) {\Large $\cdots \cdots$};
\draw [very thick] (-1,5.5)--(4.5,5.5);
\draw [very thick] (5,0.5)--(7,0.5);
\draw [very thick] (4.5,5.5) .. controls (4.75,5.5) and (4.75,0.5) .. (5,0.5);
\node at (-1.4,5.5) {$\wGamma$};
\node at (0,-1.5) {};

\draw [dashed] (0.5,-0.25)--(0,1);
\draw (0,0.75)--(0.5,2);
\draw (0.5,1.75)--(0,3);
\draw (0,3.75)--(0.5,5);
\draw [dashed] (0.5,4.75)--(0,6);
\node at (0,3.5) {$\vdots$};
\node at (0.5,-0.6) {\footnotesize $\wE _1'$};
\node at (0.7,1.25) {\footnotesize $\wD _{1,1}$};
\node at (-0.1,2.25) {\footnotesize $\wD _{1,2}$};
\node at (-0.5,4.3) {\footnotesize $\wD _{1,\alpha _1-1}$};
\node at (0,6.3) {\footnotesize $\wE _1$};

\draw [dashed] (4,-0.25)--(3.5,1);
\draw (3.5,0.75)--(4,2);
\draw (4,1.75)--(3.5,3);
\draw (3.5,3.75)--(4,5);
\draw [dashed] (4,4.75)--(3.5,6);
\node at (3.5,3.5) {$\vdots$};
\node at (4,-0.6) {\footnotesize $\wE _r'$};
\node at (4.2,1.25) {\footnotesize $\wD _{r,1}$};
\node at (3.4,2.25) {\footnotesize $\wD _{r,2}$};
\node at (3,4.3) {\footnotesize $\wD _{r,\alpha _r-1}$};
\node at (3.5,6.3) {\footnotesize $\wE _r$};

\draw (5.5,-0.25)--(6.5,2);
\draw [dashed] (6.3,0.9)--(6.3,4.85);
\draw (6.5,3.75)--(5.5,6);
\node at (5.5,-0.6) {\footnotesize $\wD _{r+1,0}$};
\node at (5.5,6.3) {\footnotesize $\wD _{r+1,1}$};
\node at (5.8,2.3) {\footnotesize $\wE _{r+1}$};
\end{tikzpicture}}\end{center}\end{minipage}

\begin{minipage}[c]{1\hsize}\begin{center}\scalebox{0.7}{\begin{tikzpicture}
\draw [thick] (-1,0)--(7,0);
\node at (-1.4,0) {$\wD _0$};
\node at (-2,6.3) {\Large (c)};
\node at (1.75,3) {\Large $\cdots \cdots$};
\draw [very thick] (-1,5.5)--(7,5.5);
\node at (-1.4,5.5) {$\wGamma$};

\draw [dashed] (0.5,-0.25)--(0,1);
\draw (0,0.75)--(0.5,2);
\draw (0.5,1.75)--(0,3);
\draw (0,3.75)--(0.5,5);
\draw [dashed] (0.5,4.75)--(0,6);
\node at (0,3.5) {$\vdots$};
\node at (0.5,-0.6) {\footnotesize $\wE _1'$};
\node at (0.7,1.25) {\footnotesize $\wD _{1,1}$};
\node at (-0.1,2.25) {\footnotesize $\wD _{1,2}$};
\node at (-0.5,4.3) {\footnotesize $\wD _{1,\alpha _1-1}$};
\node at (0,6.3) {\footnotesize $\wE _1$};

\draw [dashed] (4,-0.25)--(3.5,1);
\draw (3.5,0.75)--(4,2);
\draw (4,1.75)--(3.5,3);
\draw (3.5,3.75)--(4,5);
\draw [dashed] (4,4.75)--(3.5,6);
\node at (3.5,3.5) {$\vdots$};
\node at (4,-0.6) {\footnotesize $\wE _r'$};
\node at (4.2,1.25) {\footnotesize $\wD _{r,1}$};
\node at (3.4,2.25) {\footnotesize $\wD _{r,2}$};
\node at (3,4.3) {\footnotesize $\wD _{r,\alpha _r-1}$};
\node at (3.5,6.3) {\footnotesize $\wE _r$};

\draw (5.5,-0.25)--(6.5,2);
\draw (6.3,0.9)--(6.3,4.85);
\draw (6.5,3.75)--(5.5,6);
\node at (5.5,-0.6) {\footnotesize $\wD _{r+1,0}$};
\node at (5.5,6.3) {\footnotesize $\wD _{r+1,1}$};
\node at (5.7,2.3) {\footnotesize $\wD _{r+1,2}$};
\draw [dashed] (5,3)--(7,3);
\node at (5,3.3) {\footnotesize $\wE _{r+1}$};
\end{tikzpicture}
\qquad
\begin{tikzpicture}
\draw [thick] (-1,0)--(7,0);
\node at (-1.4,0) {$\wD _0$};
\node at (-2,6.3) {\Large (d)};
\node at (4.25,3) {\Large $\cdots \cdots$};
\draw [very thick] (1.25,5.5)--(7,5.5);
\draw [very thick] (-0.5,-0.25)--(1.25,5.5);
\node at (0.8,5) {$\wGamma$};

\draw [dashed] (3,-0.25)--(2.5,1);
\draw (2.5,0.75)--(3,2);
\draw (3,1.75)--(2.5,3);
\draw (2.5,3.75)--(3,5);
\draw [dashed] (3,4.75)--(2.5,6);
\node at (2.5,3.5) {$\vdots$};
\node at (3,-0.6) {\footnotesize $\wE _1'$};
\node at (3.2,1.25) {\footnotesize $\wD _{1,1}$};
\node at (2.4,2.25) {\footnotesize $\wD _{1,2}$};
\node at (2,4.3) {\footnotesize $\wD _{1,\alpha _1-1}$};
\node at (2.5,6.3) {\footnotesize $\wE _1$};

\draw [dashed] (6.54,-0.25)--(6,1);
\draw (6,0.75)--(6.5,2);
\draw (6.5,1.75)--(6,3);
\draw (6,3.75)--(6.5,5);
\draw [dashed] (6.5,4.75)--(6,6);
\node at (6,3.5) {$\vdots$};
\node at (6.5,-0.6) {\footnotesize $\wE _r'$};
\node at (6.7,1.25) {\footnotesize $\wD _{r,1}$};
\node at (5.9,2.25) {\footnotesize $\wD _{r,2}$};
\node at (5.5,4.3) {\footnotesize $\wD _{r,\alpha _r-1}$};
\node at (6,6.3) {\footnotesize $\wE _r$};
\end{tikzpicture}}
\caption{Configurations of $\wGamma$ in Lemma \ref{rational curve}}\label{Gamma}
\end{center}\end{minipage}
\end{figure}
\subsection{Cases of weak del Pezzo surfaces}\label{3-3}
Let $S$ be a Du Val del Pezzo surface of degree $d \ge 3$ and with $\rho (S)>1$ such that ${\rm Sing}(S) \not= \emptyset$, and let $f:\wS \to S$ be the minimal resolution. 
In this subsection, we shall consider the condition whether there exists a $\bP ^1$-fibration $g: \wS \to \bP ^1_{\bk}$ satisfying $(\ast _n)$ for some $n$. 
Recall that the classification of Du Val del Pezzo surfaces of degree $\ge 3$ (see, e.g., {\cite[\S \S 8.4--8.7 and \S \S 9.2]{Dol12}}). 
In other words, we obtain an explicit birational morphism $h: \wS \to \bP ^2_{\bk}$ according to the pair $(d,{\rm Dyn}(S))$. 
In addition, if $(d,{\rm Dyn}(S)) \not= (3,4A_1)$, there exists a non-negative integer $n$ with $5-d \le n \le 2$ such that we can construct a $\bP ^1$-fibration $g: \wS \to \bP ^1_{\bk}$ admitting a section $\wD _0$ with $(\wD _0)^2 = -n$; moreover, such a $\wD _0$ is unique where $n=2$ (see the Table \ref{table}, for detail construction). 

We shall explain the notation of Table \ref{table}. 
``Type'' means the Dynkin type of $S$. 
``$n$'' means that there exists the number $n$ such that there is a $\bP ^1$-fibration $g:\wS \to \bP ^1_{\bk}$ satisfying $(\ast _n)$. 
In what follows, we fix this $\bP ^1$-fibration $g:\wS \to \bP ^1_{\bk}$ and we use the notation from \S \S \ref{3-1}. 
``$(r,s,t)$'' means the triplet of the numbers of singular fibers of $g$ corresponding to (\ref{I-1}), (\ref{I-2}) and (\ref{II}). 
``Configurations of singular fibers'' means the kind of singular fibers of $g$. 
As an example, we consider the type $A_2+2A_1$. 
Then Table \ref{table} states that there exists a $\bP ^1$-fibration $g:\wS \to \bP ^1_{\bk}$ satisfying $(\ast _2)$ such that $g$ has exactly $(5-d)$ singular fibers as in (\ref{I-1}) and exactly one singular fiber such as in (\ref{II}) but $g$ does not have any singular fiber as in (\ref{I-2}) because of $(r,s,t)=(5-d,0,1)$. 
More strictly, by ``Configurations of singular fibers'', there are exactly $(4-d)$ singular fibers as in (\ref{I-1}) with $\alpha _i = 1$, exactly one singular fiber as in (\ref{I-1}) with $\alpha _i = 2$, and exactly one singular fiber as in (\ref{II}) with $\gamma _1=2$. 
Namely, if $d=3$, the configuration of this $\bP ^1$-fibration $g:\wS \to \bP ^1_{\bk}$ is then as in Figure \ref{A_2+2A_1}, where we assume $\alpha _1 = 1$ and $\alpha _2 = 2$ in this figure. 

By Table \ref{table}, our observation is summarized as follows: 
\begin{prop}\label{prop(3)}
Let $S$ be a Du Val del Pezzo surface of degree $d \ge 3$ and with $\rho (S)>1$ such that ${\rm Sing}(S) \not= \emptyset$, and let $f:\wS \to S$ be the minimal resolution. 
Assume that $(d,{\rm Dyn}(S)) \not= (3,4A_1)$. 
Then there exists a $\bP ^1$-fibration $g: \wS \to \bP ^1_{\bk}$ satisfying $(\ast _n)$ for some non-negative integer $n$ with $5-d \le n \le 2$. 
\end{prop}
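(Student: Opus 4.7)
The plan is a case-by-case construction guided by the classification of Du Val del Pezzo surfaces of degree $\ge 3$ with $\rho(S)>1$ (see \cite{CT88} and \cite[\S \S 8.4--8.7, \S \S 9.2]{Dol12}). For each pair $(d, \mathrm{Dyn}(S))$ other than $(3, 4A_1)$, the classification yields an explicit birational morphism $h:\wS \to \bP^2_{\bk}$ (as in Lemma \ref{lem(2-1)}), realizing $\wS$ as a blowup of $\bP^2_{\bk}$ at a prescribed configuration of (possibly infinitely near) points, in which the $f$-exceptional $(-2)$-curves are visible as chains of dual type $\mathrm{Dyn}(S)$.

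For each case, I would exhibit a suitable pencil on $\bP^2_{\bk}$---typically a pencil of lines through one of the base points of $h$, or in some cases a pencil of conics through a prescribed cluster---whose proper transform on $\wS$ is base-point-free and hence defines a $\bP^1$-fibration $g:\wS\to\bP^1_{\bk}$. The section $\wD_0$ is then taken to be the proper transform of a line through the pencil center, or of a distinguished $h$-exceptional component, with self-intersection $-n$ read off from the blowup combinatorics; tallying the number of points blown up on the base curve forces $5-d \le n \le 2$.

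The main obstacle is the third clause of $(\ast_n)$: every irreducible curve on $\wS$ of self-intersection $\le -2$, other than at most $\wD_0$, must lie in a singular fiber of $g$. This forces the pencil to be chosen so that every Du Val singularity (together with its infinitely near resolution data) is absorbed as a base point of the pencil, whence each chain of $(-2)$-curves coming from $f^{-1}(\mathrm{Sing}(S))$ is swept into a fiber. Verifying that the resulting singular fibers then fall into the configurations (\ref{I-1}), (\ref{I-2}), or (\ref{II}) is a routine computation on the dual graph, and uniqueness of $\wD_0$ when $n=2$ follows from the last condition of $(\ast_n)$.

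The outcome of these case-by-case constructions---recording $n$, the triplet $(r,s,t)$, and the shapes of the singular fibers for each admissible $(d, \mathrm{Dyn}(S))$---is collected in Table \ref{table}, which yields the proposition. The exclusion of $(3, 4A_1)$ is necessary: in that Du Val cubic the four $A_1$-singularities lie in sufficiently general position that no pencil of lines on $\bP^2_{\bk}$ can absorb all four $(-2)$-curves into its fibers while retaining a section of self-intersection $\ge -2$.
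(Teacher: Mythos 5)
Your proposal takes essentially the same route as the paper: the paper also proves this proposition by running through the classification of Du Val del Pezzo surfaces of degree $\ge 3$ via explicit plane models $h:\wS \to \bP^2_{\bk}$, taking the $\bP^1$-fibration induced by a suitable pencil (e.g.\ $g=\Phi_{|e_0-e_1|}$ in its worked example for $(3,A_2+2A_1)$), checking the conditions of $(\ast_n)$ and reading off $n$, $(r,s,t)$ and the fiber shapes case by case, with the outcome recorded in Table \ref{table}. The only caveat is your closing assertion that excluding $(3,4A_1)$ is \emph{necessary}: this is not part of the statement, and your heuristic (which only rules out pencils of lines) would not prove it, but nothing in the proposition depends on that claim.
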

\begin{table}[t]
\begin{center}
\begin{tabular}{|c|c|c|c|} \hline

Type & $n$ & $(r,s,t)$ & Configurations of singular fibers \\ \hline \hline
$A_1$ ($d=6$ and 3 lines) & $1$ & $(0,1,0)$ & $(\beta _1, \beta _1') = (1,1)$ \\ \hline
$A_1$ (otherwise) & 2 & $(8-d,0,0)$ & $\{ \alpha _i\} _{1 \le i \le 8-d} = \{ (1)_{8-d}\}$ \\ \hline
$2A_1$ ($d=4$ and 8 lines) & $1$ & $(1,1,0)$ & $\alpha _1 = 2$, $(\beta _1, \beta _1') = (1,1)$ \\ \hline
$2A_1$ (otherwise) & $2$ & $(7-d,0,0)$ & $\{ \alpha _i\} _{1 \le i \le 7-d} = \{ (1)_{6-d},2\}$ \\ \hline
$A_2$ & $2$ & $(6-d,1,0)$ & $\{ \alpha _i\} _{1 \le i \le 6-d} = \{ (1)_{6-d}\}$, $(\beta _1, \beta _1') = (1,1)$ \\ \hline
$3A_1$ & $2$ & $(6-d,0,0)$ & $\{ \alpha _i\} _{1 \le i \le 6-d} = \{ (1)_{4-d}, (2)_2\}$ \\ \hline
$A_2+A_1$ & $2$ & $(6-d,0,0)$ & $\{ \alpha _i\} _{1 \le i \le 6-d} = \{ (1)_{5-d}, 3\}$ \\ \hline
$A_3$ ($d=4$ and 4 lines) & $2$ & $(0,2,0)$ & $\{ (\beta _i , \beta _i')\} _{i=1,2} = \{ (1,1)_2 \}$ \\ \hline
$A_3$ (otherwise) & $2$ & $(5-d,1,0)$ & $\{ \alpha _i\} _{1 \le i \le 5-d} = \{ (1)_{5-d}\}$, $(\beta _1 , \beta _1') = (2,1)$ \\ \hline
$4A_1$ ($d=4$) & $1$ & $(0,0,2)$ & $\{ \gamma _i\} _{i=1,2} = \{ (2)_2\} _{i=1,2}$ \\ \hline
$A_2+2A_1$ & $2$ & $(5-d,0,1)$ & $\{ \alpha _i\} _{1 \le i \le 5-d} = \{ (1)_{4-d},2\}$, $\gamma _1 = 2$\\ \hline
$2A_2$ & $2$ & $(1,1,0)$ & $\alpha _1 = 3$, $(\beta _1,\beta _1') = (1,1)$ \\ \hline
$A_3+A_1$ & $2$ & $(5-d,0,0)$ & $\{ \alpha _i\} _{1 \le i \le 5-d} = \{ (1)_{4-d}, 4\}$ \\ \hline
$A_4$ & $2$ & $(4-d,1,0)$ & $\{ \alpha _i\} _{1 \le i \le 4-d} = \{ (1)_{4-d}\}$, $(\beta _1 , \beta _1') = (3,1)$ \\ \hline
$D_4$ & $2$ & $(4-d,1,0)$ & $\{ \alpha _i\} _{1 \le i \le 4-d} = \{ (1)_{4-d}\}$, $(\beta _1 , \beta _1') = (2,2)$ \\ \hline
$2A_2+A_1$ & $2$ & $(1,0,1)$ & $\alpha _1 =3$, $\gamma _1 = 2$ \\ \hline
$A_3+2A_1$ & $2$ & $(1,0,2)$ & $\alpha _1 =1$, $\{ \gamma _i\} _{i=1,2} = \{ (2)_2\}$ \\ \hline
$A_4+A_1$ & $2$ & $(1,0,1)$ & $\alpha _1 =2$, $\gamma _1 = 3$ \\ \hline
$A_5$ & $2$ & $(0,1,0)$ & $(\beta _1 , \beta _1') = (4,1)$ \\ \hline
$D_5$ & $2$ & $(1,0,1)$ & $\alpha _1 = 1$, $\gamma _1 = 4$ \\ \hline
\end{tabular}
\end{center}
\caption{The list of the configuration of $\bP ^1$-fibration $g:\wS \to \bP ^1_{\bk}$}\label{table}

\end{table}
\begin{figure}[t]
\begin{center}\scalebox{0.5}{\begin{tikzpicture}
\draw [thick] (-0.5,0.5)--(8.5,0.5);
\node at (-0.9,0.5) {\Large $\wD _0$};

\draw [dashed,thick] (1,0)--(0,3);
\draw [dashed,thick] (0,2)--(1,5);
\node at (1,-0.4) {\Large $\wD _{1,0} = \wE _1'$};
\node at (1,5.4) {\Large $\wD _{1,1} = \wE _1$};

\draw [dashed,thick] (4,0)--(3,2);
\draw [thick] (3.2,1.25)--(3.2,3.75);
\draw [dashed,thick] (3,3)--(4,5);
\node at (4,-0.4) {\Large $\wD _{2,0} = \wE _2'$};
\node at (3.8,2.5) {\Large $\wD _{2,1}$};
\node at (4,5.4) {\Large $\wD _{2,2} = \wE _2$};

\draw [thick] (6,0)--(7,2);
\draw [dashed,thick] (6.8,1.25)--(6.8,3.75);
\draw [thick] (7,3)--(6,5);
\node at (6,-0.4) {\Large $\wD_{3,0}$};
\node at (8,2.5) {\Large $\wD _{3,2} = \wE _3$};
\node at (6,5.4) {\Large $\wD _{3,1}$};
\end{tikzpicture}}\end{center}
\caption{Configuration of $\wS$ with $(d,{\rm Dyn} (S)) = (3,A_2+2A_1)$}\label{A_2+2A_1}
\end{figure}
\begin{rem}
Let the notations and the assumptions be the same as in Proposition \ref{prop(3)}, a $\bP ^1$-fibration $g:\wS \to \bP ^1_{\bk}$ satisfying $(\ast _n)$ is not always determined uniquely. 
For instance, we can present the following examples by applying {\cite[Proposition 6.1]{CT88}}: 
\begin{enumerate}
\item If $(d,{\rm Dyn}(S))=(4,3A_1)$, then $\wS$ admits a $\bP ^1$-fibration structure satisfying $(\ast _1)$ and a $\bP ^1$-fibration structure satisfying $(\ast _2)$ (see Figure \ref{3A1}). 
\item If $(d,{\rm Dyn}(S))=(4,A_2+A_1)$, then $\wS$ admits three $\bP ^1$-fibration structures satisfying $(\ast _2)$ (see Figure \ref{A2A1}). 
\end{enumerate}
\end{rem}
\begin{figure}[t]
\begin{minipage}[c]{1\hsize}\begin{center}\scalebox{0.5}{\begin{tikzpicture}
\draw [very thick] (-1,0.5)--(5,0.5);
\node at (-1.4,0.5) {\Large $\wD _0$};

\draw [dashed,thick] (1,0)--(0,2);
\draw [very thick] (0.2,1.25)--(0.2,3.75);
\draw [dashed,thick] (0,3)--(1,5);

\draw [dashed,thick] (4,0)--(3,2);
\draw [very thick] (3.2,1.25)--(3.2,3.75);
\draw [dashed,thick] (3,3)--(4,5);
\end{tikzpicture}
\qquad \qquad
\begin{tikzpicture}
\draw [dashed,very thick] (-1,0.5)--(5,0.5);
\node at (-1.4,0.5) {\Large $\wD _0$};

\draw [dashed] (1,0)--(0,2);
\draw [very thick] (0.2,1.25)--(0.2,3.75);
\draw [dashed,thick] (0,3)--(1,5);

\draw [very thick] (4,0)--(3,2);
\draw [dashed,thick] (3.2,1.25)--(3.2,3.75);
\draw [very thick] (3,3)--(4,5);
\end{tikzpicture}}
\caption{Configuration of $\wS$ with $(d,{\rm Dyn}(S))=(4,3A_1)$}\label{3A1}
\end{center}\end{minipage}

\begin{minipage}[c]{1\hsize}\begin{center}\scalebox{0.5}{\begin{tikzpicture}
\draw [very thick] (-1,0.5)--(5,0.5);
\node at (-1.4,0.5) {\Large $\wD _0$};
\node at (0,6) {};

\draw [dashed,thick] (1,0)--(0,3);
\draw [dashed,thick] (0,2)--(1,5);

\draw [dashed,thick] (4,0)--(3,2);
\draw [very thick] (3,1)--(4,3);
\draw [very thick] (4,2)--(3,4);
\draw [dashed,thick] (3,3)--(4,5);
\end{tikzpicture}
\qquad \qquad
\begin{tikzpicture}
\draw [very thick] (-1,0.5)--(5,0.5);
\node at (-1.4,0.5) {\Large $\wD _0$};
\node at (0,6) {};

\draw [dashed,thick] (1,0)--(0,2);
\draw [very thick] (0.2,1.25)--(0.2,3.75);
\draw [dashed,thick] (0,3)--(1,5);

\draw [very thick] (3.5,0)--(3.5,5);
\draw [dashed,thick] (2.5,2)--(4.5,2);
\draw [dashed,thick] (2.5,3.5)--(4.5,3.5);
\end{tikzpicture}
\qquad \qquad
\begin{tikzpicture}
\draw [very thick] (-1,0.5)--(8,0.5);
\node at (-1.4,0.5) {\Large $\wD _0$};
\node at (0,6) {};

\draw [dashed,thick] (1,0)--(0,3);
\draw [dashed,thick] (0,2)--(1,5);

\draw [dashed,thick] (4,0)--(3,3);
\draw [dashed,thick] (3,2)--(4,5);

\draw [very thick] (7,0)--(6,2);
\draw [dashed,thick] (6.2,1.25)--(6.2,3.75);
\draw [very thick] (6,3)--(7,5);
\end{tikzpicture}}
\caption{Configuration of $\wS$ with $(d,{\rm Dyn}(S))=(4,A_2+A_1)$}\label{A2A1}
\end{center}\end{minipage}
\end{figure}
For the reader's convenience, we present an example of how to construct $\bP ^1$-fibration $g:\wS \to \bP ^1_{\bk}$: 
\begin{eg}
In the case $(d,{\rm Dyn}(S))=(3,A_2+2A_1)$, we shall explicitly construct a $\bP ^1$-fibration $g:\wS \to \bP ^1_{\bk}$ as in Table \ref{table}. 
By Lemma \ref{lem(2-1)}, we obtain a birational morphism $h:\wS \to \bP ^2_{\bk}$. 
Let $e_0$ be a total transform of a general line on $\bP ^2_{\bk}$ by $h$ and let $e_1,\dots ,e_6$ be total transforms of the exceptional divisor. 
By {\cite[\S \S 9.2]{Dol12}}, we can assume that $\wS$ has exactly four $(-2)$-curves, which are linearly equivalent to $e_0 - e_1 -e_2 -e_3$, $e_1-e_2$, $e_2-e_3$ and $e_4-e_5$ by virtue of $|e_0 - e_1 - e_2 - e_3| \not= \emptyset$, $|e_1 - e_2| \not= \emptyset$, $|e_2 - e_3| \not= \emptyset$ and $|e_4 - e_5| \not= \emptyset$. 
Now, we consider a $\bP ^1$-fibration with $(\ast _2)$ admitting a section, which is the $(-2)$-curve corresponding to $e_1 - e_2$. 
By noting $(e_0 - e_1)^2 = 0$, the divisor $e_0 - e_1$ defines a $\bP ^1$-fibration $g := \Phi _{|e_0 - e_1|}:\wS \to \bP ^1_{\bk}$. 
Then we can check that $g$ admits exactly three singular fibers such as Fingure \ref{A_2+2A_1}. 
Moreover, by using Lemma \ref{lem(2-2)} we then know $\wE _1' \sim e_0 -e_1 -e_6$, $\wE _1 \sim e_6$, $\wE _2' \sim e_0 -e_1 -e_4$, $\wD_{2,1} \sim e_4-e_5$, $\wE _2 \sim e_5$, $\wD _{3,0} \sim e_2-e_3$, $\wD _{3,1} \sim e_0 -e_1-e_2 -e_3$ and $\wE _3 \sim e_2$. 
\end{eg}
\section{Cylinders in some singular rational surfaces}\label{4}
The following theorem is our main result in this section: 
\begin{thm}\label{thm(4)}
Let $S$ be a normal rational surface (not necessarily a del Pezzo surface) and let $f: \wS \to S$ be the minimal resolution. 
Assume that the following three conditions hold: 
\begin{itemize}
\item There exists a $\bP ^1$-fibration $g:\wS \to \bP ^1_{\bk}$ satisfying $(\ast _n)$ for some non-negative integer $n$ (see the beginning of \S \ref{3}, for this definition); 
\item All irreducible curves on $\wS$ with self-intersection number $\le -2$ are contracted by $f$; 
\item $(-K_{\wS})^2 \ge 5-n$. 
\end{itemize}
Then $\Ampc (S) = \Amp (S)$. 
\end{thm}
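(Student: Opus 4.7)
Fix an ample $\bQ$-divisor $H$ on $S$ and set $\widetilde{H} := f^{\ast}(H)$. By the second hypothesis, every $(-2)$-curve of $\wS$ lies in the $f$-exceptional locus, so $\widetilde{H}$ is a nef $\bQ$-divisor on $\wS$ with $\widetilde{H} \cdot R = 0$ for every $(-2)$-curve $R$ and $\widetilde{H} \cdot C > 0$ for every other irreducible curve $C$. The task reduces to producing an effective $\bQ$-divisor $\wD \sim _{\bQ} \widetilde{H}$ on $\wS$ whose support contains every $(-2)$-curve and whose complement $\wS \setminus \Supp (\wD)$ is a cylinder: then $f$ restricts to an isomorphism $\wS \setminus \Supp (\wD) \simeq S \setminus \Supp (f_{\ast}\wD)$ and $f_{\ast}\wD \sim _{\bQ} H$, producing an $H$-polar cylinder on $S$.

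The geometric engine is a reduction to a Hirzebruch surface. Successively contracting $(-1)$-curves inside the singular fibers of $g$ produces a birational morphism $\pi : \wS \to \bF _n$ that identifies $\wD _0$ with the minimal section $\hM$, $\wF$ with a general fiber of $\bF _n$, and each singular fiber $\wF _i$ with a smooth fiber of $\bF _n$. The three families of cylinders on $\bF _n$ from Lemma \ref{lem(2-3)} then lift to cylinders on $\wS$; the boundary divisor of the lifted cylinder is a union of $\wD _0$, certain fibers of $g$ (all of whose components, including the $(-2)$-curves, are automatically contained), and possibly one extra horizontal curve. In case (1) of Lemma \ref{lem(2-3)} no extra curve is needed; in cases (2)--(3) the extra curve is a section of $g$, whose proper transform on $\wS$ is provided by Lemma \ref{rational curve}. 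A complementary family of templates uses a general member $\wC$ of one of the linear systems $|\wDelta ^{(i)}|$ of Lemma \ref{linear system} as a bisection (or a section with a different class), whose contraction allows one to apply Lemma \ref{lem(2-3)} after further blow-downs.

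To match $\widetilde{H}$, I would parameterize the $(-2)$-orthogonal part of $\Pic(\wS)_{\bQ}$ by the pair $(a,b) := (\widetilde{H} \cdot \wF, \widetilde{H} \cdot \wD _0)$. Since $\widetilde{H} \cdot R = 0$ for every $(-2)$-curve $R$, the vertical part of $\widetilde{H}$ is uniquely determined by these two numbers, and the ample cone of $S$ corresponds to an open two-dimensional region in $(a,b)$-space. Each template above produces a boundary divisor whose class is a non-negative combination of $\wD _0$, $\wF$, and (depending on the template) a section $\wGamma$ from Lemma \ref{rational curve} and/or a member $\wC$ of a linear system from Lemma \ref{linear system}. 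By varying the coefficients of the fibers one includes and the integer multiples of $\wGamma, \wC$ one uses, the class of the template boundary sweeps out the full ample cone. The key step is to verify, configuration by configuration $(r,s,t)$ from Table \ref{table}, that the subregions covered by the three families of templates together exhaust the entire ample region with non-negative coefficients.

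The main obstacle is the breadth of this case analysis: for each configuration of singular fibers and each region of the ample cone one has to pick the template producing the correct boundary class, and confirm that the coefficients can always be chosen non-negative. The hypotheses $(\ast _n)$ and $(-K_{\wS})^2 \ge 5-n$ are calibrated precisely for this. The degree inequality is exactly what is needed in Lemma \ref{linear system} for the Riemann--Roch lower bounds to yield non-empty linear systems $|\wDelta ^{(i)}|$, and in Lemma \ref{rational curve} for the contractions of the surface to land on $\bF _0$ or $\bF _1$; these together furnish the ``raw materials'' (sections and bisections with prescribed intersection profile) without which the ample cone could not be covered. If the degree were smaller, the bisection and section templates would cease to exist and the argument would break down, which is consistent with the failure of the conclusion in the $d=2$ and $d=1$ cases of Theorem \ref{CPW16}.
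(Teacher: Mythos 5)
Your skeleton (pull $H$ back to $\wS$, exhibit an effective $\bQ$-divisor $\bQ$-linearly equivalent to $f^{\ast}(H)$ whose support contains all $f$-exceptional curves and whose complement is one of the cylinders of Lemma \ref{lem(2-3)}, using Lemma \ref{rational curve} for the extra section and Lemma \ref{linear system} as raw material) is indeed the paper's strategy. However, the step on which your whole matching argument rests is false: orthogonality to the $(-2)$-curves does \emph{not} reduce the problem to the two parameters $(\widetilde{H}\cdot\wF,\widetilde{H}\cdot\wD_0)$. The $(-1)$-curves $\wE_i,\wE_i'$ inside the singular fibers are not contracted by $f$, so $\Cl (S)_{\bQ}=\bQ[D_0]\oplus\bQ[F]\oplus\bigl(\bigoplus_{i=1}^{r+s}\bQ[E_i]\bigr)$ (with $D_0=0$ when $n\ge 2$) has rank $2+r+s$ in general (e.g.\ rank $6$ for the degree-$3$ surface of type $A_1$), and the coefficients $b_i$, $c_j$ --- equivalently, how $\widetilde{H}$ meets $\wE_i$ versus $\wE_i'$ in each fiber --- are independent parameters. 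Hence the ample cone of $S$ is not ``an open two-dimensional region in $(a,b)$-space'', and a two-parameter family of template boundaries cannot sweep it out; the claim that ``the vertical part of $\widetilde{H}$ is uniquely determined by these two numbers'' is exactly where the proposal breaks.

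The content you are missing is precisely what the paper spends Section \ref{4} on. First, the linear systems $|\wDelta^{(2)}|,\dots,|\wDelta^{(7)}|$ are not used as boundary components: general members are pushed forward and intersected with $H$ to produce the positivity statements of Lemmas \ref{lem(4-1-1)}, \ref{lem(4-2-1)}, \ref{lem(4-3-1)} and \ref{lem(4-4-1)}, i.e.\ inequalities involving \emph{all} the coordinates $a,b_1,\dots,b_r,c$, which are what later guarantee that the boundary divisor can be written with non-negative coefficients. Second, for each fiber one must decide whether $\wE_i$ or $\wE_i'$ goes into the boundary; in the paper this depends on ordering the ratios $b_i/\alpha_i$ and introducing the thresholds $r'$, $r''$, together with a small $\varepsilon$-perturbation (proofs of Lemmas \ref{lem(4-2-2)}, \ref{(4-3-2)}, \ref{lem(4-4-2)}, \ref{lem(4-5-2)}), and separate constructions are needed when $c<0$ (Lemmas \ref{lem(4-2-3)}, \ref{lem(4-2-4)}) and for the sporadic configuration of type $2A_1$ with eight lines in \S\ref{4-5}. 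None of this is recoverable from the $(a,b)$-picture, and ``verify configuration by configuration that the subregions exhaust the ample region'' is not a proof outline once the parameter space has the correct dimension: the choice of boundary configuration and the verification of non-negativity of its coefficients against an arbitrary ample class is the theorem, and your proposal gives no mechanism for it.
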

In what follows, we prove Theorem \ref{thm(4)}. 
Let $S$ be a normal rational surface satisfying three conditions as in Theorem \ref{thm(4)} and let $f: \wS \to S$ be the minimal resolution. 
By the assumption, there exists a non-negative integer $n$ such that $\wS$ admits a $\bP ^1$-fibration structure $g:\wS \to \bP ^1_{\bk}$ with $(\ast _n)$. 
Hence, we use the notation from \S \S \ref{3-1} in what follows. 
Put $F := f_{\ast}(\wF)$, $D_0 := f_{\ast}(\wD _0)$, $E_i := f_{\ast}(\wE _i)$ and $E_j' := f_{\ast}(\wE _j')$. 
Then we know $\Cl (S)_{\bQ} = \bQ [D_0] \oplus \bQ [F] \oplus \left( \bigoplus _{i=1}^{r+s}\bQ[E_i] \right)$ by virtue of $E _i' \sim _{\bQ} F - E_i$ for $i=1,\dots ,r+s$ (if $r+s>0$) and $2E_{r+s+j} \sim _{\bQ} F$ for $j=1,\dots ,t$ (if $t>0$). 
Here, we note $D_0 = 0$ when $n \ge 2$. 

We then obtain the following lemma: 
\begin{lem}\label{lem(4-1)}
If one of the following conditions holds: 
\begin{itemize}
\item $\gamma \ge 4$; 
\item $\gamma >0$ and $(-K_{\wS})^2 + \beta ' \ge 6-n$; 
\item $\gamma =0$ and $(-K_{\wS})^2 + \beta ' \ge 7-n$, 
\end{itemize}
then $\Ampc (S) = \Amp (S)$. 
\end{lem}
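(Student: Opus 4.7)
Fix an ample $\bQ$-divisor $H$ on $S$, and expand
\[H \sim_{\bQ} a_0 D_0 + a_{\infty} F + \sum_{i=1}^{r+s} c_i E_i\]
in the basis of $\Cl(S)_{\bQ}$ recalled above (with the convention $D_0 = 0$ if $n = 2$). Ampleness forces $a_0 = (H \cdot F) > 0$ together with strict inequalities on $(a_\infty, c_i)$ coming from the positivity conditions $(H \cdot E_i),(H \cdot E_i'),(H \cdot D_0) > 0$; for instance, for a type (I-1) fiber one computes $(E_i)^2 = -1/\alpha_i$ and $(E_i \cdot E_i') = 1/\alpha_i$, whence $-a_0 \alpha_i < c_i < 0$. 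The plan is to produce, in each of the three cases, an effective $\bQ$-divisor $D \sim_{\bQ} H$ whose support's complement in $S$ is a cylinder.

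The base construction is the cylinder
\[U_0 := S \backslash (D_0 \cup F \cup F_1 \cup \cdots \cup F_{r+s+t}),\]
obtained by pushing down via $f$ the cylinder of Lemma \ref{lem(2-3)} (1) applied to the contraction $\wS \to \bF_n$ that blows down the $(-1)$-curves and subsequently contractible curves inside each singular fiber. Using the relations $E_i + E_i' \sim F$ for $i \le r+s$ and $2 E_{r+s+j} \sim F$ for $j \le t$, an effective divisor on $S$ with support $S \backslash U_0$ realizes every class of the form $a_0 D_0 + \lambda F + \sum c_i E_i$ with $a_0 > 0$ and $\lambda > \sum_{c_i < 0} |c_i|$; one picks $x_{E_i} = \max(c_i, 0) + \varepsilon$, $x_{E_i'} = \max(-c_i, 0) + \varepsilon$, $x_{E_{r+s+j}} = \delta$, and finally $x_F = a_\infty - \sum x_{E_i'} - \tfrac{1}{2} \sum x_{E_{r+s+j}}$ for small $\varepsilon, \delta > 0$.

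For ample $H$ outside this range — that is, where $a_\infty$ is not large enough compared with $\sum_{c_i < 0} |c_i|$ — I augment the palette using the constructions of Section \ref{3-2}. Each smooth rational curve $\wGamma$ produced by Lemma \ref{rational curve}, after contracting the prescribed $(-1)$-curves and subsequent contractible curves in the singular fibers, lands on a Hirzebruch surface where Lemma \ref{lem(2-3)} (2) or (3) yields a cylinder of the form $S \backslash (\Gamma \cup D_0 \cup F')$ with $\Gamma := f_*(\wGamma)$ and $F'$ a specific fiber; the class $[\Gamma]$ carries nontrivial coefficients on the $E_i$, so these auxiliary cylinders polarize classes that $U_0$ misses. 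The effective divisors $\wC \in |\wDelta^{(k)}|$ from Lemma \ref{linear system} furnish further anchors. Splitting into the three cases: (i) if $\gamma \ge 4$, use Lemma \ref{linear system} (6) and the type (II) structure; (ii) if $\gamma > 0$ with $\alpha + \beta + \gamma \le n + 2$, combine Lemma \ref{linear system} (5)--(6) with Lemma \ref{rational curve} (2); (iii) if $\gamma = 0$ with $\alpha + \beta \le n + 1$, combine Lemma \ref{linear system} (1)--(4), (7) with Lemma \ref{rational curve} (1), (3).

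The main obstacle is this last step: verifying, in each of the three cases and for every admissible configuration of $(r, s, t)$ and of the $\alpha_i, \beta_i, \beta_i', \gamma_i$, that the palette of cylinders ($U_0$ together with the auxiliary ones) covers the entire ample cone $\Amp(S)$. This amounts to a numerical linear-algebra check, comparing the coefficients of $H$ in the basis $(D_0, F, E_i)$ against the non-negativity conditions of the chosen effective decomposition. The three numerical hypotheses of the lemma are exactly what is needed so that the augmented palette closes up around $\Amp(S)$.
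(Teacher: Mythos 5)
Your first step is sound and matches half of the paper's argument: the cylinder $U_0 = S \backslash (D_0 \cup F \cup F_1 \cup \dots \cup F_{r+s+t})$ coming from Lemma \ref{lem(2-3)} (1), together with the relations $E_i' \sim_{\bQ} F - E_i$ and $2E_{r+s+j} \sim_{\bQ} F$, polarizes exactly those ample $H$ for which the coefficient of $F$ exceeds the sum of the absolute values of the negative coefficients on the $E_i$ — this is the paper's quantity $d_{r,s'} > 0$, and your explicit choice of coefficients is essentially the paper's Lemma \ref{lem(4-1-2)}. The gap is in what you do when this inequality might fail. The whole content of Lemma \ref{lem(4-1)} is that under each of the three numerical hypotheses this case \emph{never occurs}: the paper (Lemma \ref{lem(4-1-1)}) shows that the hypotheses make the linear system $|\wDelta^{(1)}|$ (or $|\frac{1}{2}\wDelta^{(1)}|$ when $\gamma = 0$) non-empty via the Riemann--Roch estimate of Lemma \ref{linear system} (1), and then intersecting $H$ with $f_{\ast}(\wC)$ for a general member $\wC$ yields precisely $d_{r,s'} > 0$, since $f_{\ast}(\wC) \sim_{\bQ} 2D_0 + 2nF - \sum 2\alpha_i E_i - \sum 2\beta_j E_{r+j} - \sum \gamma_k E_{r+s+k}$ is a nonzero effective class. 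You never make this deduction; instead you treat the failure of the inequality as a live possibility and propose to cover it with auxiliary cylinders, deferring the decisive verification to an unexecuted ``numerical linear-algebra check'' and simply asserting that the hypotheses are ``exactly what is needed.'' That assertion is the lemma, so as written the proof is incomplete.

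Moreover, the fallback palette you invoke is largely unavailable in the situations this lemma must handle. The curves of Lemma \ref{rational curve} exist only under the hypotheses $s=1$, $t=0$, $\beta'=1$, or $s=0$, $t=1$, $\gamma \in \{2,3\}$, or $s=t=0$, each with $(-K_{\wS})^2 \in \{5-n, 6-n\}$; Lemma \ref{lem(4-1)} is applied precisely in configurations excluded from those cases (for instance $s+t \ge 2$, or $\beta' \ge 2$, or $\gamma \ge 4$, or larger $(-K_{\wS})^2$), which is why the paper reserves the auxiliary cylinders of Lemma \ref{lem(2-3)} (2)--(3) for the later subsections \S\S \ref{4-2}--\ref{4-5} and proves Lemma \ref{lem(4-1)} using only $U_0$. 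So even if you carried out the numerical check, the augmentation route would not be well defined for many of the configurations covered by the statement; the correct repair is to replace it by the intersection-theoretic argument with $|\wDelta^{(1)}|$ described above.
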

The above lemma implies that we may treat only the cases $(s,t) = (0,0)$, $(1,0)$ and $(0,1)$. 
Meanwhile, the following lemma holds: 
\begin{lem}\label{lem(4-2)}
If $s+t \le 1$, then $\Ampc (S) = \Amp (S)$. 
\end{lem}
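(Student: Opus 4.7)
The plan is to treat the three subcases $(s,t) \in \{(0,0),(1,0),(0,1)\}$ separately. In each case, given an arbitrary ample $\bQ$-divisor $H$ on $S$, the goal is to construct an effective $\bQ$-divisor $D$ on $S$ with $D \sim _{\bQ} H$ such that $S \setminus \Supp (D)$ is a cylinder. First I would pull $H$ back to $\wS$ and write $f^{\ast}(H) \sim _{\bQ} a \wD _0 + b \wF + (\text{combination of singular-fiber components and contracted } (-2)\text{-curves})$. The ampleness of $H$ translates into explicit positivity of $a$, $b$, and certain intersection-theoretic combinations of the other coefficients.

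The key idea is to realize $f^{\ast}(H)$ as a non-negative $\bQ$-linear combination of the following distinguished effective classes on $\wS$: the smooth rational curve $\wGamma$ from Lemma \ref{rational curve}, general members of the linear systems $|\wDelta ^{(j)}|$ from Lemma \ref{linear system}, the section $\wD _0$, a general fiber $\wF$, certain singular-fiber components, and the $(-2)$-curves contracted by $f$ (which push down to zero on $S$). After choosing sufficiently generic representatives, the resulting effective divisor $\wD$ is designed so that its complement in $\wS$ contracts, via the $(-1)$-curves and subsequently smoothly contractible singular-fiber components disjoint from $\Supp (\wD )$, onto $\bF _n \setminus (\wGamma \cup \wD _0 \cup \wF )$ or $\bF _n \setminus (\wGamma \cup \wD _0 \cup \wF _0)$. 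By Lemma \ref{lem(2-3)}, this complement is a cylinder of the form $\bA ^1_{\bk} \times (\bA ^1_{\bk} \setminus \{\text{finitely many points}\})$ or $\bA ^1_{\bk} \times \bA ^1_{\ast ,\bk}$, and pushing down along $f$ yields the desired $H$-polar cylinder.

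The three subcases draw on different building blocks from Section \ref{3}. For $(s,t) = (0,0)$ one combines $\wGamma$ from Lemma \ref{rational curve}(3) (a $(-1)$- or $0$-curve) with $\wDelta ^{(7)}$ and, where the coefficients demand, $\tfrac{1}{2} \wDelta ^{(1)}$. For $(s,t) = (0,1)$ one uses $\wGamma$ from Lemma \ref{rational curve}(2) together with $\wDelta ^{(5)}$ or $\wDelta ^{(6)}$ according to the value of $\gamma \in \{2,3\}$. For $(s,t) = (1,0)$ one uses $\wGamma$ from Lemma \ref{rational curve}(1) combined with $\wDelta ^{(2)}, \wDelta ^{(3)}, \wDelta ^{(4)}$ when $\beta ' = 1$, or with $\wDelta ^{(1)}$ alone when $\beta ' \ge 2$. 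In every instance the constraint $(-K_{\wS})^2 \ge 5-n$ built into the theorem's hypotheses is exactly what is required to invoke the relevant part of Lemma \ref{rational curve}.

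The main obstacle will be the case $(s,t) = (1,0)$ with $\beta ' = 1$ and $\beta$ large: here the ample cone of $S$ has the highest rank, and covering every ample class requires interpolating among $|\wDelta ^{(2)}|,|\wDelta ^{(3)}|,|\wDelta ^{(4)}|$ and $\wGamma$ while maintaining non-negativity of the coefficients. Expressing $f^{\ast}(H)$ as the appropriate combination reduces to a linear-algebra exercise whose solvability hinges on the precise form of the ampleness inequalities for $H$; the necessary bookkeeping of intersection numbers will likely occupy the bulk of the proof.
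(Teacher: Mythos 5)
Your high-level plan matches the paper's skeleton (split into $(s,t)=(0,0),(1,0),(0,1)$, reduce the easy subcases via Lemma \ref{lem(4-1)}, use the curve $\wGamma$ of Lemma \ref{rational curve} and conclude with Lemma \ref{lem(2-3)}), but the central mechanism you propose is wrong. You want to realize $f^{\ast}(H)$ as a non-negative combination whose building blocks include \emph{general members of the linear systems} $|\wDelta ^{(j)}|$, and then take the complement of the support of the resulting divisor. Those general members are horizontal curves (sections or multisections of $g$, cf.\ Figure \ref{C}) that are not contained in $\wGamma \cup \wD _0 \cup (\text{singular fibers}) \cup \wF _0$; if any of them enters $\Supp (D)$ with positive coefficient, the complement is the cylinder of Lemma \ref{lem(2-3)} with an extra horizontal curve removed, which is no longer of the form $\bA ^1_{\bk} \times Z$ and is not covered by Lemma \ref{lem(2-3)} at all. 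In the paper the members of $|\wDelta ^{(j)}|$ never appear in the boundary: their only role is that $(H \cdot f_{\ast}(\wC )) > 0$ for a general member $\wC$ produces the linear inequalities on the coefficients of $H$ (Lemmas \ref{lem(4-1-1)}, \ref{lem(4-2-1)}, \ref{lem(4-3-1)}, \ref{lem(4-4-1)}) that are needed to make the coefficients of the actual boundary components --- $\Gamma$, $D_0$, the $E_i$, $E_i'$, $F$ or $F_0$, with a small $\varepsilon$ --- non-negative. Without that reformulation your decomposition either fails to exist with non-negative coefficients or destroys the cylinder you need.

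A second, smaller gap: you never address $n \le 1$. When $n\le 1$ the curve $D_0 = f_{\ast}(\wD _0)$ is nonzero on $S$, so its coefficient $a_0$ must be accounted for (the paper needs $d_0 = a_0 + \tfrac{b_{r'}}{\alpha _{r'}} = (H \cdot E_{r'}') > 0$ in the $(s,t)=(0,0)$ case), and for $(s,t)=(1,0)$ or $(0,1)$ with $n\le 1$ the paper does not run your argument at all: it uses the classification to replace $g$ by a different $\bP ^1$-fibration satisfying $(\ast _2)$, with a separate ad hoc construction for the degree-$4$ surface of type $2A_1$ with eight lines (\S\S \ref{4-5}). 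Your proposal, which fixes one fibration and the curves attached to it, does not explain how these low-$n$ configurations are handled.
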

Thus, the proof of Theorem \ref{thm(4)} is completed if Lemmas \ref{lem(4-1)} and \ref{lem(4-2)} are proved. 
Here, Lemma \ref{lem(4-1)} is proved in \S \S \ref{4-1}. 
On the other hand, Lemma \ref{lem(4-2)} is proved in \S \S \ref{4-2}--\S \S \ref{4-5}. 
More precisely, case $n \ge 2$ and $(s,t)=(1,0)$ (resp. $(0,1)$, $(0,0)$) will be treated in \S \S \ref{4-2} (resp. \S \S \ref{4-3}, \S \S \ref{4-4}); moreover, the remaining case $n \le 1$ and $s+t \le 1$ will be treated in \S \S \ref{4-5}. 
\subsection{Case $s+t \ge 2$}\label{4-1}
In this subsection, we prove Lemma \ref{lem(4-1)}. 
We use the notation from \S \S \ref{3-1}. 

Let $H \in \Amp (S)$. 
Since $\Amp (S) \subseteq \Cl (S)_{\bQ} = \bQ [D_0] \oplus \bQ [F] \oplus \left( \bigoplus _{i=1}^r\bQ [E_i] \right) \oplus \left( \bigoplus _{j=1}^s\bQ [E_{r+j}] \right)$, we can write:
\begin{align*}
H \sim _{\bQ} a_0D_0 + aF + \sum _{i=1}^rb_iE_i + \sum _{j=1}^sc_jE_{r+j}
\end{align*}
for some rational numbers $a_0,a,b_1,\dots ,b_r,c_1,\dots ,c_s$, where we consider $a_0 = 0$ if $n \ge 2$. 
Then we note $b_i = -\alpha _i(H \cdot E_i)<0$ for $i=1,\dots ,r$ if $r>0$. 
Put $s' := \sharp \{ j \in \{1,\dots ,s\} \,|\, c_j <0 \}$, where $s' := 0$ if $s=0$. 
Moreover, we can assume $c_j<0$ for $j=1,\dots ,s'$ if $s'>0$. 
\begin{lem}\label{lem(4-1-0)}
We set the divisor: 
\begin{align*}
\wDelta &:= 2\wD _0 + 2n\wF - \sum _{i=1}^r \sum _{\lambda =1}^{\alpha _i}2\lambda \wD _{i,\lambda} \\
&\qquad - \sum _{j=1}^{s'} \sum _{\mu =1}^{\beta _j}2\mu \wD _{r+j,\mu} 
- \sum _{j'=s'+1}^s \sum _{\mu' =1}^{\beta _{j'}'}2\mu' \wD _{r+j',\beta_{j'}+\mu'} 
- \sum _{k=1}^t\sum _{\nu =1}^{\gamma _k}\nu \wD _{r+s+k,\nu}
\end{align*}
on $\wS$. 
Then the following assertions hold: 
\begin{enumerate}
\item $\dim |\wDelta| \ge 3n+2 -3\alpha -3\beta -\gamma$. 
\item If $t=0$, then $\frac{1}{2}\wDelta$ is a $\bZ$-divisor and further $\dim |\frac{1}{2}\wDelta| \ge n+1-\alpha -\beta$. 
\end{enumerate}
\end{lem}
\begin{proof}
Note that: 
\begin{align*}
(\wDelta)^2 &= 4n - \sum _{i=1}^r 4\alpha _i - \sum _{j=1}^{s'} 4\beta _j - \sum _{j'=s'+1}^s 4\beta _{j'}' - \sum _{k=1}^t \gamma _k \ge 4n -4(\alpha + \beta) - \gamma ,\\
(\wDelta \cdot -K_{\wS}) &= 2(n+2) - \sum _{i=1}^r 2\alpha _i - \sum _{j=1}^{s'} 2\beta _j - \sum _{j'=s'+1}^s 2\beta _{j'}' - \sum _{k=1}^t \gamma _k \ge 2(n+2) -2(\alpha + \beta) - \gamma
\end{align*}
because $\beta _{j'} \ge \beta _{j'}'$ for every $j' = s'+1,\dots ,s$ (if $s' \not= s$). 
Hence, assertion (1) follows from the similar argument to the proof of Lemma \ref{linear system}. 
In order to show the assertion (2), we assume $t=0$. 
Then $\frac{1}{2}\wDelta$ is obviously a $\bZ$-divisor on $\wS$ by the definition of $\wDelta$. 
The remaining assertion follows from the similar argument to the proof of Lemma \ref{linear system}. 
\end{proof}
\begin{rem}
For the reader's convenience, we present figures of general members in linear systems related to Lemma \ref{lem(4-1-0)}. 
Let $\wDelta$ be the same as in Lemma \ref{lem(4-1-0)}. 
Then: 
\begin{itemize}
\item The configuration of a general member $\wC \in |\wDelta|$ is given as Figure \ref{C2} (a). 
\item Assuming $t=0$, the configuration of a general member $\wC \in |\frac{1}{2}\wDelta|$ is given as Figure \ref{C2} (b). 
\end{itemize}
\end{rem}
\begin{figure}[h]
\begin{minipage}[c]{1\hsize}\begin{center}\scalebox{0.7}{\begin{tikzpicture}
\draw [thick] (-1,0)--(19,0);
\node at (-1.4,0) {$\wD _0$};
\node at (-2,6.3) {\Large (a)};
\node at (1.5,3.4) {\large $\cdots \cdots$};
\node at (5,3) {\Large $\cdots$};
\node at (10,2.7) {\Large $\cdots$};
\node at (15,3) {\large $\cdots \cdots$};
\draw [very thick] (-0.25,5.25)--(19,5.25);
\draw [very thick] (-0.25,5.75)--(13,5.75);
\draw [very thick] (-0.25,5.25) .. controls (-0.5,5.25) and (-0.5,5.75) .. (-0.25,5.75);
\node at (-0.8,5.5) {$\wC$};
\node at (0,-1.5) {};

\draw [dashed] (0,-0.25)--(0.5,1);
\draw (0.5,0.75)--(0,2);
\draw (0,1.75)--(0.5,3);
\draw (0.5,3.75)--(0,5);
\draw [dashed] (0,4.75)--(0.5,6);
\node at (0.5,3.45) {$\vdots$};
\node at (0,-0.6) {\footnotesize $\wE _1'$};
\node at (0.5,6.3) {\footnotesize $\wE _1$};

\draw [dashed] (2,-0.25)--(2.5,1);
\draw (2.5,0.75)--(2,2);
\draw (2,1.75)--(2.5,3);
\draw (2.5,3.75)--(2,5);
\draw [dashed] (2,4.75)--(2.5,6);
\node at (2.5,3.5) {$\vdots$};
\node at (2,-0.6) {\footnotesize $\wE _r'$};
\node at (2.5,6.3) {\footnotesize $\wE _r$};

\draw (3.5,-0.25)--(3.5,1);
\draw (3.5,1) .. controls (3.5,1.25) .. (3.75,1.25);
\draw (3.75,1.25)--(4.5,1.25);
\draw (3.75,1)--(4,1.75);
\draw (4.25,1)--(4.5,1.75);
\draw (3.75,2.25)--(4,1.5);
\draw (4.25,2.25)--(4.5,1.5);
\node at (3.75,2.75) {$\vdots$};
\node at (3.75,3.25) {$\vdots$};
\node at (4.25,2.75) {$\vdots$};
\node at (4.25,3.25) {$\vdots$};
\draw (3.75,3.5)--(4,4.25);
\draw (4.25,3.5)--(4.5,4.25);
\draw (3.75,4.75)--(4,4);
\draw [dashed] (4.25,4.75)--(4.5,4);
\draw [dashed] (3.75,4.25)--(4.25,6);
\node at (3.5,-0.6) {\footnotesize $\wD _{r+1,0}$};
\node at (4.9,4.5) {\footnotesize $\wE_{r+1}'$};
\node at (4.5,6.3) {\footnotesize $\wE _{r+1}$};

\draw (5.5,-0.25)--(5.5,1);
\draw (5.5,1) .. controls (5.5,1.25) .. (5.75,1.25);
\draw (5.75,1.25)--(6.5,1.25);
\draw (5.75,1)--(6,1.75);
\draw (6.25,1)--(6.5,1.75);
\draw (5.75,2.25)--(6,1.5);
\draw (6.25,2.25)--(6.5,1.5);
\node at (5.75,2.75) {$\vdots$};
\node at (5.75,3.25) {$\vdots$};
\node at (6.25,2.75) {$\vdots$};
\node at (6.25,3.25) {$\vdots$};
\draw (5.75,3.5)--(6,4.25);
\draw (6.25,3.5)--(6.5,4.25);
\draw (5.75,4.75)--(6,4);
\draw [dashed] (6.25,4.75)--(6.5,4);
\draw [dashed] (5.75,4.25)--(6.25,6);
\node at (5.5,-0.6) {\footnotesize $\wD _{r+s',0}$};
\node at (6.9,4.5) {\footnotesize $\wE_{r+s'}'$};
\node at (6.5,6.3) {\footnotesize $\wE _{r+s'}$};

\draw (8.25,-0.25)--(8.25,1);
\draw (8.25,1) .. controls (8.25,1.25) .. (8.5,1.25);
\draw (8.5,1.25)--(9.25,1.25);
\draw (8.5,1)--(8.75,1.75);
\draw (9,1)--(9.25,1.75);
\draw (8.5,2.25)--(8.75,1.5);
\draw (9,2.25)--(9.25,1.5);
\node at (8.5,2.75) {$\vdots$};
\node at (9,2.75) {$\vdots$};
\draw (8.5,3)--(8.75,3.75);
\draw (9,3)--(9.5,4.75);
\draw (8.5,4.25)--(8.75,3.5);
\draw [dashed] (8.75,4.75)--(8.5,4);
\draw [dashed] (9,6)--(9.5,4.25);
\node at (8.25,-0.6) {\footnotesize $\wD _{r+s'+1,0}$};
\node at (9.2,6.3) {\footnotesize $\wE_{r+s'+1}'$};
\node at (8,4.75) {\footnotesize $\wE _{r+s'+1}$};

\draw (10.75,-0.25)--(10.75,1);
\draw (10.75,1) .. controls (10.75,1.25) .. (11,1.25);
\draw (11,1.25)--(11.75,1.25);
\draw (11,1)--(11.25,1.75);
\draw (11.5,1)--(11.75,1.75);
\draw (11,2.25)--(11.25,1.5);
\draw (11.5,2.25)--(11.75,1.5);
\node at (11,2.75) {$\vdots$};
\node at (11.5,2.75) {$\vdots$};
\draw (11,3)--(11.25,3.75);
\draw (11.5,3)--(12,4.75);
\draw (11,4.25)--(11.25,3.5);
\draw [dashed] (11.25,4.75)--(11,4);
\draw [dashed] (11.5,6)--(12,4.25);
\node at (10.75,-0.6) {\footnotesize $\wD _{r+s,0}$};
\node at (11.5,6.3) {\footnotesize $\wE_{r+s}'$};
\node at (10.7,4.75) {\footnotesize $\wE _{r+s}$};

\draw (13,-0.25)--(13,1.5);
\draw (12.75,1.25)--(14,1.25);
\draw (13.75,1.5)--(13.75,0.25);
\draw (13.25,1)--(13.5,1.75);
\draw (13.25,2.25)--(13.5,1.5);
\node at (13.25,2.75) {$\vdots$};
\node at (13.25,3.25) {$\vdots$};
\draw (13.25,3.5)--(13.5,4.25);
\draw (13.25,4.75)--(13.5,4);
\draw [dashed] (13.25,4.25)--(13.75,6);
\node at (13,-0.6) {\footnotesize $\wD _{r+s+1,0}$};
\node at (14.7,1.25) {\footnotesize $\wD _{r+s+1,2}$};
\node at (14.6,0.5) {\footnotesize $\wD _{r+s+1,1}$};
\node at (13.75,6.3) {\footnotesize $\wE _{r+s+1}$};

\draw (16,-0.25)--(16,1.5);
\draw (15.75,1.25)--(17,1.25);
\draw (16.75,1.5)--(16.75,0.25);
\draw (16.25,1)--(16.5,1.75);
\draw (16.25,2.25)--(16.5,1.5);
\node at (16.25,2.75) {$\vdots$};
\node at (16.25,3.25) {$\vdots$};
\draw (16.25,3.5)--(16.5,4.25);
\draw (16.25,4.75)--(16.5,4);
\draw [dashed] (16.25,4.25)--(16.75,6);
\node at (16,-0.6) {\footnotesize $\wD _{r+s+t,0}$};
\node at (17.7,1.25) {\footnotesize $\wD _{r+s+t,2}$};
\node at (17.6,0.5) {\footnotesize $\wD _{r+s+t,1}$};
\node at (16.75,6.3) {\footnotesize $\wE _{r+s+t}$};
\end{tikzpicture}}\end{center}\end{minipage}

\begin{minipage}[c]{1\hsize}\begin{center}\scalebox{0.7}{\begin{tikzpicture}
\draw [thick] (-1,0)--(13,0);
\node at (-1.4,0) {$\wD _0$};
\node at (-2,6.3) {\Large (b)};
\node at (1.5,3.4) {\large $\cdots \cdots$};
\node at (5,3) {\Large $\cdots$};
\node at (10,2.7) {\Large $\cdots$};
\draw [very thick] (-0.25,5.55)--(13,5.5);
\node at (-0.6,5.5) {$\wC$};

\draw [dashed] (0,-0.25)--(0.5,1);
\draw (0.5,0.75)--(0,2);
\draw (0,1.75)--(0.5,3);
\draw (0.5,3.75)--(0,5);
\draw [dashed] (0,4.75)--(0.5,6);
\node at (0.5,3.45) {$\vdots$};
\node at (0,-0.6) {\footnotesize $\wE _1'$};
\node at (0.5,6.3) {\footnotesize $\wE _1$};

\draw [dashed] (2,-0.25)--(2.5,1);
\draw (2.5,0.75)--(2,2);
\draw (2,1.75)--(2.5,3);
\draw (2.5,3.75)--(2,5);
\draw [dashed] (2,4.75)--(2.5,6);
\node at (2.5,3.5) {$\vdots$};
\node at (2,-0.6) {\footnotesize $\wE _r'$};
\node at (2.5,6.3) {\footnotesize $\wE _r$};

\draw (3.5,-0.25)--(3.5,1);
\draw (3.5,1) .. controls (3.5,1.25) .. (3.75,1.25);
\draw (3.75,1.25)--(4.5,1.25);
\draw (3.75,1)--(4,1.75);
\draw (4.25,1)--(4.5,1.75);
\draw (3.75,2.25)--(4,1.5);
\draw (4.25,2.25)--(4.5,1.5);
\node at (3.75,2.75) {$\vdots$};
\node at (3.75,3.25) {$\vdots$};
\node at (4.25,2.75) {$\vdots$};
\node at (4.25,3.25) {$\vdots$};
\draw (3.75,3.5)--(4,4.25);
\draw (4.25,3.5)--(4.5,4.25);
\draw (3.75,4.75)--(4,4);
\draw [dashed] (4.25,4.75)--(4.5,4);
\draw [dashed] (3.75,4.25)--(4.25,6);
\node at (3.5,-0.6) {\footnotesize $\wD _{r+1,0}$};
\node at (4.9,4.5) {\footnotesize $\wE_{r+1}'$};
\node at (4.5,6.3) {\footnotesize $\wE _{r+1}$};

\draw (5.5,-0.25)--(5.5,1);
\draw (5.5,1) .. controls (5.5,1.25) .. (5.75,1.25);
\draw (5.75,1.25)--(6.5,1.25);
\draw (5.75,1)--(6,1.75);
\draw (6.25,1)--(6.5,1.75);
\draw (5.75,2.25)--(6,1.5);
\draw (6.25,2.25)--(6.5,1.5);
\node at (5.75,2.75) {$\vdots$};
\node at (5.75,3.25) {$\vdots$};
\node at (6.25,2.75) {$\vdots$};
\node at (6.25,3.25) {$\vdots$};
\draw (5.75,3.5)--(6,4.25);
\draw (6.25,3.5)--(6.5,4.25);
\draw (5.75,4.75)--(6,4);
\draw [dashed] (6.25,4.75)--(6.5,4);
\draw [dashed] (5.75,4.25)--(6.25,6);
\node at (5.5,-0.6) {\footnotesize $\wD _{r+s',0}$};
\node at (6.9,4.5) {\footnotesize $\wE_{r+s'}'$};
\node at (6.5,6.3) {\footnotesize $\wE _{r+s'}$};

\draw (8.25,-0.25)--(8.25,1);
\draw (8.25,1) .. controls (8.25,1.25) .. (8.5,1.25);
\draw (8.5,1.25)--(9.25,1.25);
\draw (8.5,1)--(8.75,1.75);
\draw (9,1)--(9.25,1.75);
\draw (8.5,2.25)--(8.75,1.5);
\draw (9,2.25)--(9.25,1.5);
\node at (8.5,2.75) {$\vdots$};
\node at (9,2.75) {$\vdots$};
\draw (8.5,3)--(8.75,3.75);
\draw (9,3)--(9.5,4.75);
\draw (8.5,4.25)--(8.75,3.5);
\draw [dashed] (8.75,4.75)--(8.5,4);
\draw [dashed] (9,6)--(9.5,4.25);
\node at (8.25,-0.6) {\footnotesize $\wD _{r+s'+1,0}$};
\node at (9.2,6.3) {\footnotesize $\wE_{r+s'+1}'$};
\node at (8,4.75) {\footnotesize $\wE _{r+s'+1}$};

\draw (10.75,-0.25)--(10.75,1);
\draw (10.75,1) .. controls (10.75,1.25) .. (11,1.25);
\draw (11,1.25)--(11.75,1.25);
\draw (11,1)--(11.25,1.75);
\draw (11.5,1)--(11.75,1.75);
\draw (11,2.25)--(11.25,1.5);
\draw (11.5,2.25)--(11.75,1.5);
\node at (11,2.75) {$\vdots$};
\node at (11.5,2.75) {$\vdots$};
\draw (11,3)--(11.25,3.75);
\draw (11.5,3)--(12,4.75);
\draw (11,4.25)--(11.25,3.5);
\draw [dashed] (11.25,4.75)--(11,4);
\draw [dashed] (11.5,6)--(12,4.25);
\node at (10.75,-0.6) {\footnotesize $\wD _{r+s,0}$};
\node at (11.5,6.3) {\footnotesize $\wE_{r+s}'$};
\node at (10.7,4.75) {\footnotesize $\wE _{r+s}$};
\end{tikzpicture}}
\caption{Configurations of $\wC$ in Lemma \ref{lem(4-1-0)}}\label{C2}
\end{center}\end{minipage}
\end{figure}
For simplicity, we set $d_{r,s'} := a + \sum _{i=1}^rb_i + \sum _{j=1}^{s'}c_j $, where we consider $\sum _{j=1}^{s'}c_j = 0$ if $s'=0$. 
\begin{lem}\label{lem(4-1-1)}
If one of the following conditions holds: 
\begin{enumerate}
\item $\gamma \ge 4$; 
\item $\gamma > 0$ and $(-K_{\wS})^2 + \beta ' \ge 6-n$; 
\item $\gamma =0$ and $(-K_{\wS})^2 + \beta ' \ge 7-n$, 
\end{enumerate}
then $d_{r,s'} >0$. 
\end{lem}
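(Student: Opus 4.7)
The plan is to exhibit, in each of the three regimes, an effective $\bQ$-divisor $\Gamma$ on $S$ whose intersection with $H$ computes to exactly $d_{r,s'}$; ampleness of $H$ then forces $d_{r,s'}>0$.

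The preparatory step records the needed intersection pairings on $S$ via the minimal resolution $f:\wS\to S$, computed by inverting the intersection matrix along each chain of contracted $(-2)$-curves. In particular, Mumford's method gives $f^{*}E_{i}=\wE_{i}+\sum_{\mu=1}^{\alpha_{i}-1}(\mu/\alpha_{i})\wD_{i,\mu}$ and an analogous correction for $f^{*}E_{r+j}$ in type (I-2) fibers, from which one deduces
$(H\cdot F)_{S}=a_{0}$ (when $n\le 1$), $(H\cdot E_{i})_{S}=-b_{i}/\alpha_{i}$, and comparable explicit expressions for $(H\cdot D_{0})_{S}$ and $(H\cdot E_{r+j})_{S}$; together these give $a_{0},a>0$ and $b_{i}<0$, while the signs of the $c_{j}$ are a priori uncontrolled.

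The heart of the proof is then the construction of $\wGamma$ on $\wS$ case by case. The model is case (3) with $s=0$: Riemann--Roch applied to the class $\wD_{0}+n\wF-\sum_{i,\lambda}\lambda\wD_{i,\lambda}$ (self-intersection $n-\alpha$, canonical pairing $\alpha-n-2$) yields $h^{0}\ge n+2-\alpha\ge 1$ under the hypothesis $\alpha\le n+1$ of case (3); taking $\wGamma$ in this linear system and setting $\Gamma=f_{*}\wGamma$, the expansion of $(H\cdot\Gamma)_{S}$ in the basis $[D_{0}],[F],[E_{i}]$ of $\Cl(S)_{\bQ}$ collapses to $a+\sum_{i=1}^{r}b_{i}=d_{r,0}$ exactly. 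For case (3) with $s\ge 1$, I would modify $\wGamma$ by adjoining components $\wE_{r+j}$ or $\wE_{r+j}'$ of type (I-2) fibers according to whether $j\le s'$ or $j>s'$, landing in one of the linear systems $\wDelta^{(2)},\wDelta^{(3)},\wDelta^{(4)}$ of Lemma~\ref{linear system} (all nonempty since $\alpha+\beta\le n+1$). For case (2) the analogous construction uses $\wDelta^{(5)}$ or $\wDelta^{(6)}$ to accommodate the unique type (II) fiber, using $\alpha+\beta+\gamma\le n+2$; for case (1) ($\gamma\ge 4$), the abundance of $(-2)$-curves in a single type (II) fiber gives enough flexibility that a direct blow-down argument in the spirit of Lemma~\ref{rational curve}(3) produces the required effective divisor.

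The main obstacle is matching the meeting pattern of $\wGamma$ against the type (I-2) and type (II) fibers with the cutoff $s'$, while simultaneously controlling the corrections that appear in $(D_{0}^{2})_{S}$ and $(D_{0}\cdot E_{r+j})_{S}$ whenever $\wD_{0}$ passes through contracted $(-2)$-curves in types (I-2) and (II). The computations $z_{\mu}=(\beta-\mu)/(\beta+1)$ along the $\wD_{0}$-chain and $y_{\mu}=(\mu+1)/(\beta+1)$ along the $\wE_{r+j}$-chain need to cancel correctly, so that the class of $f_{*}\wGamma$ in $\Cl(S)_{\bQ}$ produces precisely the coefficients $1,n,-\alpha_{i},\delta_{j\le s'}$ against $D_{0},F,E_{i},E_{r+j}$. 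Once this bookkeeping is in place, the intersection calculation is forced by linearity and by the pairings recorded in the preparatory step, and the numerical hypotheses of cases (1), (2), (3) are precisely the content ensuring that the relevant linear systems are nonempty.
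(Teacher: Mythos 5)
Your overall strategy (produce an effective divisor on $\wS$ via a Riemann--Roch nonemptiness argument, push it forward, and pair with the ample class $H$) is the same as the paper's, but as written the proposal has genuine gaps. First, the lemma is needed for arbitrary $(r,s,t)$: it is the engine behind Lemma \ref{lem(4-1)}, which is exactly how the paper disposes of all configurations with $s+t\ge 2$. Your constructions for case (3) with $s\ge 1$ and for case (2) invoke the linear systems $\wDelta^{(2)},\wDelta^{(3)},\wDelta^{(4)}$ and $\wDelta^{(5)},\wDelta^{(6)}$ of Lemma \ref{linear system}, but those are only defined under the hypotheses $s=1$, $t=0$, $\beta'=1$ (respectively $s=0$, $t=1$, $\gamma\in\{2,3\}$); they simply do not exist when there are two or more fibers of type (\ref{I-2}) or (\ref{II}), or when both types occur, which is the very situation the lemma must cover. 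The paper avoids this by using the single divisor $\wDelta^{(1)}$ (and $\tfrac12\wDelta^{(1)}$ when $t=0$), defined for all $(r,s,t)$, whose nonemptiness under (1)--(3) follows from Lemma \ref{linear system}(1) together with $(-K_{\wS})^2=8-(\alpha+\beta+\beta'+\gamma)$. Likewise your case (1) is only a gesture: $\gamma\ge 4$ may be spread over several type (\ref{II}) fibers, Lemma \ref{rational curve} requires $(-K_{\wS})^2=5-n$ and very specific $(s,t)$, and no pairing computation is indicated; in the paper this case is again handled by $\wDelta^{(1)}$ via $\dim|\wDelta^{(1)}|\ge 2\gamma-7\ge 1$.

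Second, the step you yourself label ``the main obstacle'' is precisely where the mathematical content lies, and it is not carried out. Your instinct that the cutoff at $s'$ forces a sign-dependent choice between the branches ending in $\wE_{r+j}$ and $\wE_{r+j}'$ is sound (ampleness of $H$ only yields positivity of $(H\cdot f_{\ast}\wC)$, and one must identify that number with, or bound it by, $d_{r,s'}$, which omits the $c_j\ge 0$), but to exploit it you would have to build the branch choice into a single class valid for all $s,t$ and redo the Riemann--Roch estimate for it, not quote Lemma \ref{linear system}(2)--(6). Moreover the intersection bookkeeping you sketch is not yet correct in the needed generality: the coefficients $z_{\mu},y_{\mu}$ you propose treat one chain in isolation and ignore $\beta_j'$, the other fibers attached to $\wD_0$, and above all the case $n\ge 2$, where $\wD_0$ itself is contracted, so $f^{\ast}F\ne\wF$, $(F)^2\ne 0$, and $(E_{r+j}\cdot F)\ne 0$; the asserted positivity of $a$ is also unjustified (and unnecessary). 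Until the pairing of the pushforward with $H$ is actually computed and shown to produce $d_{r,s'}$ (with the $s'$ cutoff) in every configuration allowed by (1)--(3), the argument does not reach the conclusion.
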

\begin{proof}
Let $\wDelta$ be the same as in Lemma \ref{lem(4-1-0)}. 
In this proof, we will use the formula $(-K_{\wS})^2 = 8 - (\alpha + \beta + \beta ' + \gamma ) \ge 5-n$. 

In (1) and (2), we then obtain $\dim |\wDelta| \ge 0$. 
Indeed, if $\gamma \ge 4$, then we have $\dim |\wDelta| \ge 2\gamma -7 \ge 1$ by Lemma \ref{lem(4-1-0)} (1) combined with $-(\alpha + \beta) \ge -3-n + \gamma$. 
Meanwhile, if $\gamma > 0$ and $(-K_{\wS})^2 + \beta' \ge 6-n$, then we have $\dim |\wDelta | \ge 2(\gamma -2) \ge 0$ by Lemma \ref{lem(4-1-0)} (1) combined with $-(\alpha + \beta) \ge -2-n + \gamma$. 
Here, we note that $\gamma \ge 2$ provided $\gamma > 0$. 
Hence, $|\wDelta| \not= \emptyset$, so that we can take a general member $\wC$ of $|\wDelta|$. 
Notice $f_{\ast}(\wC ) \sim _{\bQ} 2nF - \sum _{i=1}^r 2\alpha _i E_i - \sum _{j=1}^{s'} 2\beta _j E_{r+j} - \sum _{j'=s'+1}^s 2\beta _{j'}' E_{r+j'}' - \sum _{k=1}^t \gamma _k E_{r+s+k} \not\sim _{\bQ} 0$. 
Thus, we obtain $0 < \frac{1}{2}(H \cdot f_{\ast}(\wC )) = d_{r,s'}$ (see also Figure \ref{C2} (a)). 

In (3), we then obtain $\dim | \frac{1}{2}\wDelta| \ge 0$ by Lemma \ref{lem(4-1-0)} (2) combined with $-(\alpha + \beta) \ge -1-n$. 
Hence, $|\frac{1}{2}\wDelta| \not= \emptyset$, so that we can take a general member $\wC$ of $|\frac{1}{2}\wDelta|$. 
Notice $f_{\ast}(\wC ) \sim _{\bQ} nF - \sum _{i=1}^r \alpha _i E_i - \sum _{j=1}^{s'} \beta _j E_{r+j} - \sum _{j'=s'+1}^s \beta _{j'}' E_{r+j'}' \not\sim _{\bQ} 0$. 
Thus, we obtain $0 < (H \cdot f_{\ast}(\wC )) = d_{r,s'}$ (see also Figure \ref{C2} (b)). 
\end{proof}
Now, we shall consider $U := f(\wS \backslash \Supp (\wD _0 + \wF + \wF _1 +\dots + \wF _{r+s+t}))$. 
Note that $U$ is a cylinder in $S$ because $U \simeq \bA ^1_{\bk} \times (\bA ^1_{\bk} \backslash \{ (r+s+t)\text{ points}\})$ by Lemma \ref{lem(2-3)} (1). 
Then the following result holds: 
\begin{lem}\label{lem(4-1-2)}
If $d_{r,s'} >0$, then $U$ is an $H$-polar cylinder; in particular, $H \in \Ampc (S)$. 
\end{lem}
\begin{proof}
Note that $a_0 = (H \cdot F) >0$ when $n \le 1$. 
We take the effective $\bQ$-divisor:
\begin{align*}
D := &a_0D_0 + \sum _{i=1}^r (-b_i) E_i' + \sum _{j=1}^{s'} (-c_j) E_{r+j}' + \sum _{j'=s'+1}^s c_{j'} E_{r+j'} \\
&\quad +\frac{d_{r,s'}}{r+s+t+1} \left \{F + \sum _{k=1}^{r+s}(E_k+E_k') + \sum _{\ell =1}^t2E_{r+s+\ell} \right\}
\end{align*}
on $S$. 
Then $D \sim _{\bQ} H$. 
Moreover, we know $S \backslash \Supp (D) = U$. 
Thus, $U$ is an $H$-polar cylinder. 
\end{proof}
Lemma \ref{lem(4-1)} follows from Lemmas \ref{lem(4-1-1)} and \ref{lem(4-1-2)}. 
\begin{rem}
Lemma \ref{lem(4-1-2)} states that $U$ is an $H$-polar cylinder for every ample $\bQ$-divisor $H$ on $S$ when $d_{r,s'} > 0$. 
\end{rem}
\subsection{Case $(s,t)=(1,0)$ and $n \ge 2$}\label{4-2}
In this subsection, we prove the case of $(s,t)=(1,0)$ in Lemma \ref{lem(4-2)} under the assumption that $n \ge 2$. 
We use the notation from \S \S \ref{3-1}. 
Assume that $(s,t)=(1,0)$ and $n \ge 2$. 
If $(-K_{\wS})^2 \ge 6-n$ or $\beta ' \ge 2$, then we obtain $\Ampc (S) = \Amp (S)$ by Lemma \ref{lem(4-1)} because $(-K_{\wS})^2 + \beta ' \ge 7-n$. 
From now on, we assume further that $(-K_{\wS})^2 = 5-n$ and $\beta ' = 1$; namely, $\alpha + \beta = n+2$. 

Let $H \in \Amp (S)$. 
Since $\Amp (S) \subseteq \Cl (S)_{\bQ} = \bQ [F] \oplus \left( \bigoplus _{i=1}^r\bQ [E_i] \right) \oplus \bQ [E_{r+1}]$, we can write:
\begin{align*}
H \sim _{\bQ} aF + \sum _{i=1}^rb_iE_i + cE_{r+1}
\end{align*}
for some rational numbers $a,b_1,\dots ,b_r,c$. 
We note $b_i = -\alpha _i(H \cdot E_i)<0$ for $i=1,\dots ,r$ if $r>0$. 
Then the following lemma holds: 
\begin{lem}\label{lem(4-2-1)}
The following assertions hold: 
\begin{enumerate}
\item $a + \sum _{i=1}^rb_i - \frac{b_r}{\alpha _r} + c > 0$ and $a + \sum _{i=1}^rb_i - \frac{b_r}{\alpha _r} > 0$. 
\item If $\beta \ge 2$, then $a + \sum _{i=1}^rb_i > 0$. 
\item If $\beta \ge 2$, then $a + \sum _{i=1}^rb_i + \frac{\beta -2}{\beta -1}c > 0$. 
\end{enumerate}
\end{lem}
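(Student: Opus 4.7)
The strategy for each of the three assertions is the same: exhibit an explicit effective divisor $D$ on $S$ (given as a pushforward $f_{\ast}\wC$ of an effective member of one of the linear systems in Lemma~\ref{linear system}, possibly augmented by a $(-1)$-curve) for which $(H\cdot D)_S$ equals, up to a positive factor, the left-hand side of the inequality; the conclusion then follows from ampleness of $H$.

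The technical heart is the intersection pairing on the singular surface $S$. Because the general fiber $F$ passes through the singular point of $S$ whose resolution chain is $\wD_0-\wD_{r+1,0}-\cdots-\wD_{r+1,\beta-1}$ (with self-intersections $(-n,-2,\ldots,-2)$), a Mumford pullback computation yields the nonzero values
\[
(F^{2})_S=\tfrac{\beta+1}{(n-1)\beta+n},\ (F\cdot E_{r+1})_S=\tfrac{1}{(n-1)\beta+n},\ (E_{r+1}^{2})_S=-\tfrac{n-1}{(n-1)\beta+n},
\]
along with $(F\cdot E_i)_S=0$ and $(E_i\cdot E_{r+1})_S=0$ for $i\le r$, $(E_i\cdot E_j)_S=0$ for $i\ne j$ with $i,j\le r$, and $(E_i^{2})_S=-1/\alpha_i$. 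From these the identities $n(F^{2})_S-\beta(F\cdot E_{r+1})_S=1$ and $n(F\cdot E_{r+1})_S+\beta\lvert(E_{r+1}^{2})_S\rvert=1$ follow at once, and it is these identities that will force the ensuing algebra to collapse.

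With the intersection data in hand, for (1a) one takes $\wC\in|\wDelta^{(2)}|$ (nonempty by Lemma~\ref{linear system} (2) since $\alpha+\beta=n+2$); computing $(H\cdot f_{\ast}\wC)_S$ from $f_{\ast}\wC\sim_{\bQ}nF-\sum\alpha_iE_i+E_r-\beta E_{r+1}$ and the identities above yields the clean value $a+c+\sum b_i-b_r/\alpha_r>0$. For (2) use $\wC\in|\wDelta^{(3)}|$ (nonempty for $\beta\ge 2$); substituting $E_{r+1}'\sim_{\bQ}F-E_{r+1}$ gives $f_{\ast}\wC\sim_{\bQ}(n-1)F-\sum\alpha_iE_i+E_{r+1}$, whose intersection with $H$ simplifies to $a+\sum b_i$. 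For (1b) with $\beta\ge 2$ the effective divisor $f_{\ast}\wC+E_r$ (with $\wC$ as in (2)) has intersection $a+\sum b_i-b_r/\alpha_r$. For (3) take $\wC\in|\wDelta^{(4)}|$ (nonempty for $\beta\ge 2$ by Lemma~\ref{linear system} (4)); the corresponding computation produces $(\beta-1)\bigl[a+\sum b_i+\tfrac{\beta-2}{\beta-1}c\bigr]$, and (3) follows after dividing by $\beta-1>0$. The residual case $\beta=1$ of (1b), where $|\wDelta^{(3)}|$ can be empty, must be handled separately by a tailored effective curve adapted to that fiber configuration (for instance one built from $\wGamma$ of Lemma~\ref{rational curve} (1) together with suitable $(-1)$-curves). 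The main obstacle throughout is the Mumford pullback calculation at the singularity containing $\wD_0$; once those intersection numbers and the two identities are secured, verifying each inequality is a short direct computation.
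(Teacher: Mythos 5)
Your intersection-theoretic groundwork is correct (the Mumford pullback values at the singular point supported on $\wD_0+\wD_{r+1,0}+\dots+\wD_{r+1,\beta-1}$, the vanishing $(F\cdot E_i)_S=(E_i\cdot E_{r+1})_S=0$, $(E_i^2)_S=-1/\alpha_i$, and the two identities), and your treatment of (1a), (2) and (3) is essentially the paper's own proof: one takes general members of $|\wDelta^{(2)}|$, $|\wDelta^{(3)}|$, $|\wDelta^{(4)}|$ (nonempty by Lemma \ref{linear system} since $\alpha+\beta=n+2$), pushes forward, and pairs with the ample class $H$; your computed values $a+\sum b_i-\tfrac{b_r}{\alpha_r}+c$, $a+\sum b_i$, and $(\beta-1)\bigl[a+\sum b_i+\tfrac{\beta-2}{\beta-1}c\bigr]$ all check out.

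The genuine gap is the second inequality of (1), $a+\sum b_i-\tfrac{b_r}{\alpha_r}>0$, in the case $\beta=1$, which the lemma does assert (and which is actually used later, in the proof of Lemma \ref{lem(4-2-2)} when $\beta=1$). Your route via $|\wDelta^{(3)}|$ plus $E_r$ only works for $\beta\ge 2$, since $\dim|\wDelta^{(3)}|\ge n-\alpha=\beta-2$ gives nothing when $\beta=1$ (there $(\wDelta^{(3)})^2=-2$ and $\wDelta^{(3)}\cdot K_{\wS}=0$, so effectivity is not guaranteed). Your proposed fix, an effective curve ``built from $\wGamma$ of Lemma \ref{rational curve} (1) together with suitable $(-1)$-curves,'' cannot work as stated: the class you must pair with $H$ is $nF-\sum\alpha_iE_i+E_r-E_{r+1}'$, i.e.\ upstairs an effective member of $|\wD_0+n\wF-\sum_i\sum_\lambda\lambda\wD_{i,\lambda}+\wE_r-\wE_{r+1}'|$, and you cannot reach the term $-\wE_{r+1}'$ by adding effective curves to $\wGamma\sim\wD_0+n\wF-\sum_i\sum_\lambda\lambda\wD_{i,\lambda}$ (for $\beta=1$); a genuinely different effective representative is needed. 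The paper supplies it by replacing $\wDelta^{(2)}$ with $\wDelta^{(2)}+\sum_{\mu=1}^{\beta}\mu\wD_{r+1,\mu}-\wE_{r+1}'$ and rerunning the Riemann--Roch argument of Lemma \ref{linear system}, which works uniformly for all $\beta\ge 1$ (one checks $\chi=\beta\ge1$). Alternatively, for $\beta=1$ you could note that $\wE_{r+1}$ and $\wE_{r+1}'$ play symmetric roles ($\beta=\beta'=1$), and applying your (1a) after swapping their labels (so $E_{r+1}\mapsto F-E_{r+1}$, i.e.\ $a\mapsto a+c$, $c\mapsto -c$) yields exactly the missing inequality; but some such argument must be made explicit.
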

\begin{proof}
Let $\widetilde{\Delta}^{(1)}$, $\widetilde{\Delta}^{(2)}$ and $\widetilde{\Delta}^{(3)}$ be the same as in Lemma \ref{linear system} (1), (2) and (3), respectively. 

Then we obtain $\dim |\wDelta ^{(1)}| \ge 0$ by Lemma \ref{linear system} (1) combined with $n+2=\alpha + \beta$. 
Hence, $|\wDelta ^{(1)}| \not= \emptyset$, so that we can take a general member $\wC$ of $|\widetilde{\Delta}^{(1)}|$. 
Notice $f_{\ast}(\wC ) \sim _{\bQ} nF -\sum _{i=1}^r \alpha _i E_i +E_r - \beta E_{r+1} \not\sim _{\bQ} 0$. 
Thus, we obtain $0 < (H \cdot f_{\ast}(\wC )) = a + \sum _{i=1}^rb_i - \frac{b_r}{\alpha _r} + c$ (see also Figure \ref{C} (a)). 
Furthermore, we also obtain $0 < a + \sum _{i=1}^rb_i - \frac{b_r}{\alpha _r}$ by the similar argument replacing $\wDelta ^{(1)}$ to $\wDelta ^{(1)} + \sum _{\mu =1}^{\beta}\mu \wD _{r+1,\mu} -\wE _{r+1}'$. 

Assume $\beta \ge 2$. 
Then we obtain $\dim |\wDelta ^{(2)}| \ge 0$ and $\dim |\wDelta ^{(3)}| \ge 0$ by Lemma \ref{linear system} (2) and (3) combined with $n+2=\alpha + \beta$ and $\beta \ge 2$. 
Hence, $|\wDelta ^{(2)}| \not= \emptyset$ and $|\wDelta ^{(3)}| \not= \emptyset$, so that we can take a general member $\wC'$ and $\wC''$ of $|\wDelta ^{(2)}|$ and $|\wDelta ^{(3)}|$, respectively. 
Notice $f_{\ast}(C') \sim _{\bQ} nF - \sum _{i=1}^r\alpha _iE_i - E_{r+1}' \not\sim _{\bQ} 0$ and $f_{\ast}(C'') \sim _{\bQ} (\beta -1)nF - \sum _{i=1}^r(\beta -1)\alpha _iE_i -(\beta -2)\beta E_{r+1} - E_{r+1}' \not\sim _{\bQ} 0$. 
Thus, we obtain $0 < (H \cdot f_{\ast}(\wC ')) = a + \sum _{i=1}^rb_i$ and $0 < \frac{1}{\beta -1}(H \cdot f_{\ast}(\wC '')) = a + \sum _{i=1}^rb_i + \frac{\beta -2}{\beta -1}c$ (see also Figure \ref{C} (b) and (c)). 
\end{proof}
Without less of generality, we may assume $\frac{b_1}{\alpha _1} \le \frac{b_2}{\alpha _2} \le \dots \le \frac{b_r}{\alpha _r}$ if $r>0$. 
Let $\wGamma$ be the same as in Lemma \ref{rational curve} (1). 
Then we notice that the configuration of $\wS$ is as in Figure \ref{Gamma} (a). 
Put $\Gamma := f_{\ast}(\wGamma) \sim _{\bQ} nF - \sum _{i=1}^r\alpha _iE_i - (\beta -1)E_{r+1}$. 
Then the following assertion holds: 
\begin{lem}\label{lem(4-2-2)}
With the same notations and the assumptions as above, we obtain $H \in \Ampc (S)$. 
\end{lem}
In order to show Lemma \ref{lem(4-2-2)}, we shall deal with two special cases in Lemmas \ref{lem(4-2-3)} and \ref{lem(4-2-4)}: 
\begin{lem}\label{lem(4-2-3)}
If $r=0$, then $H \in \Ampc (S)$. 
\end{lem}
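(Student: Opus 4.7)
The plan is to exhibit an $H$-polar cylinder using the $(-1)$-curve $\wGamma$ furnished by Lemma \ref{rational curve}(1). Since $r=0$ and the standing hypotheses of \S\S\ref{4-2} ($(s,t)=(1,0)$, $\beta'=1$, $(-K_{\wS})^2=5-n$) force $\alpha=0$ and $\beta=n+2$, that lemma applies and gives $\wGamma\sim\wD_0+n\wF-\sum_{\mu=1}^{\beta-1}\mu\wD_{1,\mu}-(\beta-1)\wE_1$. Pushing forward by $f$ and using $D_0=0$ (since $n\ge 2$) together with $E'_1\sim_{\bQ}F-E_1$, the curve $\Gamma:=f_{\ast}\wGamma$ satisfies $\Gamma\sim_{\bQ}nF-(n+1)E_1$ on $S$.

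First I would identify the cylinder. The contraction $h:\wS\to\hS\simeq\bF_1$ from the proof of Lemma \ref{rational curve}(1) has exceptional locus inside $\wF_1$, sends $\wD_0$ to a $1$-section of $\bF_1$ and $\wGamma$ to the $(-1)$-section (disjoint from it), and sends $\wD_{1,\beta-1}$ to a fiber. Thus Lemma \ref{lem(2-3)}(2) applied on $\hS$ yields $\wS\setminus(\wD_0\cup\wGamma\cup\wF_1)\simeq\bA^1_{\bk}\times\bA^1_{\ast,\bk}$. Since both $\Gamma$ and $F_1=E_1+E'_1$ pass through the unique singular point $p=f(\wD_0)$ of $S$, their set-theoretic preimage under $f$ equals $\wGamma\cup\wF_1\cup\wD_0$, so $S\setminus(\Gamma\cup F_1)\simeq\bA^1_{\bk}\times\bA^1_{\ast,\bk}$.

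Next, for any rational $\alpha$, I would consider
\[D:=\alpha\,\Gamma+(a+c+\alpha)\,E_1+(a-n\alpha)\,E'_1.\]
Expanding with $\Gamma\sim_{\bQ}nF-(n+1)E_1$ and $E'_1\sim_{\bQ}F-E_1$ shows $D\sim_{\bQ}aF+cE_1\sim_{\bQ}H$. Strict positivity of the three coefficients is the non-emptiness of the interval $\alpha\in(\max\{0,-(a+c)\},\,a/n)$, which holds iff $a>0$ and $a(n+1)+nc>0$. Ampleness of $H$ supplies both: $(H\cdot F)>0$ together with $(H\cdot E_1)>0$ (read in the basis $\{F,E_1\}$ of $\Cl(S)_{\bQ}$) forces $a>0$, while $(H\cdot\Gamma)>0$, which in $(a,c)$-coordinates reads $(n^2+2n-1)a+(n^2+n-1)c>0$, combined with the elementary identity $(n+1)(n^2+n-1)>n(n^2+2n-1)$ for $n\ge 2$, yields $a(n+1)+nc>0$.

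For any such $\alpha$, the divisor $D$ is effective, $\bQ$-linearly equivalent to $H$, and has $\Supp(D)=\Gamma\cup F_1$; its complement is the cylinder from the first step, so $H\in\Ampc(S)$. The main obstacle is translating ampleness into the inequality $a(n+1)+nc>0$: without an analog of Lemma \ref{lem(4-2-1)} available for $r=0$, one must mobilise the ample constraint coming from $\Gamma$ itself and combine it with the modest numerical estimate above to secure the coefficient positivity required by the construction.
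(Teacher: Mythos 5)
Your proof is correct, and at its core it is the paper's construction: the same curve $\wGamma$ from Lemma \ref{rational curve} (1), the same cylinder $\wS \backslash (\wGamma \cup \wD _0 \cup \wF _1) \simeq \bA ^1_{\bk} \times \bA ^1_{\ast ,\bk}$ via Lemma \ref{lem(2-3)} (2), and in fact the paper's divisor $-\frac{c}{n+1}\Gamma + (a+\frac{n}{n+1}c)(E_1+E_1')$ is exactly the member $\alpha = -\frac{c}{n+1}$ of your one-parameter family $D = \alpha\Gamma + (a+c+\alpha)E_1 + (a-n\alpha)E_1'$. Where you genuinely diverge is in how the positivity of the coefficients is obtained and in the case structure: the paper splits into $c\ge 0$ (handled by Lemma \ref{lem(4-1-2)} with a different cylinder) and $c<0$, and in both cases feeds in Lemma \ref{lem(4-2-1)} (2)--(3), whose proof rests on the effective members of the linear systems $\wDelta ^{(3)}$, $\wDelta ^{(4)}$ of Lemma \ref{linear system}; you instead treat all signs of $c$ uniformly and derive the two needed inequalities, $a>0$ and $(n+1)a+nc>0$ (the latter being precisely Lemma \ref{lem(4-2-1)} (3) with $\beta = n+2$), directly from ampleness paired against the honest curves $F$, $E_1$, $\Gamma$ on $S$. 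I checked your numerics: with $\beta = n+2$ the contracted chain gives $(F)^2 = \frac{n+3}{n^2+2n-2}$, $(F\cdot E_1) = \frac{1}{n^2+2n-2}$, $(E_1)^2 = \frac{1-n}{n^2+2n-2}$, so $(H\cdot F)>0$ and $(H\cdot E_1)>0$ indeed force $a>0$, and $(H\cdot \Gamma)>0$ is exactly $(n^2+2n-1)a+(n^2+n-1)c>0$; your last step is airtight because of the identity
\begin{align*}
(n^2+n-1)\bigl[ (n+1)a+nc \bigr] = n\bigl[ (n^2+2n-1)a+(n^2+n-1)c \bigr] + (n-1)a ,
\end{align*}
which is the content of your ``elementary identity''. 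The only presentational caveat is that you assert the pairings against $F$, $E_1$, $\Gamma$ without exhibiting the intersection matrix on the singular surface (equivalently the $\bQ$-pullbacks $f^{\ast}F$, $f^{\ast}E_1$), so a written-up version should include that computation; your approach buys a case-free argument independent of the Riemann--Roch constructions of \S \ref{3}, at the cost of this explicit local computation, while the paper's route reuses machinery it needs anyway for $r>0$.
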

\begin{proof}
By the assumption $r = 0$, we notice $\beta = n+2$. 
If $c \ge 0$, then we know $H \in \Ampc (S)$ by Lemma \ref{lem(4-1-2)} combined with Lemma \ref{lem(4-2-1)} (2). 
From now on, we thus assume $c<0$. 
Since $\beta = n+2$, we note $\Gamma \sim _{\bQ} nF - (n+1)E_{r+1}$. 
By noting Lemma \ref{lem(4-2-1)} (3), take the effective $\bQ$-divisor:
\begin{align*}
D := -\frac{c}{n+1} \Gamma + \left( a + \frac{n}{n+1}c\right)(E_1 + E_1')
\end{align*}
on $S$. 
Then $D \sim _{\bQ} H$. 
Moreover, by using Lemma \ref{lem(2-3)} (2) we know: 
\begin{align*}
S \backslash \Supp (D) &\simeq \wS \backslash \Supp (\wGamma + \wD _0 + \wF _1) \simeq \bA ^1_{\bk} \times \bA ^1_{\ast ,\bk}. 
\end{align*}
Thus, $H \in \Ampc (S)$. 
\end{proof}
\begin{lem}\label{lem(4-2-4)}
If $r>0$, $\beta \ge 2$ and $\frac{b_r}{\alpha _r} < \frac{c}{\beta -1}$, then $H \in \Ampc (S)$. 
\end{lem}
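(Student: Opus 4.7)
The plan is to mimic the construction of Lemma \ref{lem(4-2-3)}: produce an effective $\bQ$-divisor $D\sim_{\bQ}H$ supported on $\Gamma\cup\bigcup_{i=1}^{r}E_i'\cup\{E_{r+1},E_{r+1}'\}$, so that its complement is the cylinder obtained by contracting $h\colon\wS\to\bF_1$ as in the proof of Lemma \ref{rational curve}(1) and applying Lemma \ref{lem(2-3)}(2) on $\bF_1$. The subcase $c\ge 0$ is immediate from Lemma \ref{lem(4-1-2)} combined with $d_{r,0}=a+\sum b_i>0$ from Lemma \ref{lem(4-2-1)}(2), so henceforth I assume $c<0$.

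I take the symmetric ansatz $D:=\lambda\Gamma+\sum_{i=1}^{r}\mu_i E_i'+\nu(E_{r+1}+E_{r+1}')$, expand using the relations $\Gamma\sim_{\bQ}nF-\sum\alpha_i E_i-(\beta-1)E_{r+1}$, $E_i'\sim_{\bQ}F-E_i$, and $E_{r+1}+E_{r+1}'\sim_{\bQ}F$, and match coefficients of $F,\,E_i,\,E_{r+1}$ against $H=aF+\sum b_i E_i+cE_{r+1}$. Invoking $\alpha+\beta=n+2$, the unique solution is
\[
\lambda=-\frac{c}{\beta-1},\qquad \mu_i=\alpha_i\!\left(\frac{c}{\beta-1}-\frac{b_i}{\alpha_i}\right),\qquad \nu=a+\sum_{i=1}^{r}b_i+\frac{\beta-2}{\beta-1}c.
\]
Positivity of $\lambda$ is immediate from $c<0$; each $\mu_i>0$ follows from the standing hypothesis $b_r/\alpha_r<c/(\beta-1)$ together with the ordering $b_1/\alpha_1\le\cdots\le b_r/\alpha_r$; and $\nu>0$ is precisely Lemma \ref{lem(4-2-1)}(3) (the formula collapsing to Lemma \ref{lem(4-2-1)}(2) when $\beta=2$).

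With all coefficients strictly positive, $\Supp(D)=\Gamma\cup\bigcup_{i=1}^{r}E_i'\cup\{E_{r+1},E_{r+1}'\}$, and a direct computation gives $f^{-1}(\Supp D)=\wGamma\cup\wD_0\cup\wF_{r+1}\cup\bigsqcup_{i=1}^{r}(\wF_i\setminus\wE_i)$, since each $E_i'$ passes through both the singular point $f(\wD_0)$ and the $A_{\alpha_i-1}$-singularity of the $i$-th (I-1) fiber, while $E_{r+1}\cup E_{r+1}'$ meets every component of the (I-2) fiber via the chain through $\wD_0$. Under the contraction $h$ of Lemma \ref{rational curve}(1), this set maps onto $h_*(\wD_0)\cup h_*(\wGamma)\cup h_*(\wD_{r+1,\beta-1})$ in $\bF_1$; since $h_*(\wD_0)^2=1$ so that $h_*(\wD_0)\sim h_*(\wGamma)+\hF$ on $\bF_1$, Lemma \ref{lem(2-3)}(2) yields $\bF_1\setminus(h_*(\wD_0)\cup h_*(\wGamma)\cup h_*(\wD_{r+1,\beta-1}))\cong\bA^1_{\bk}\times\bA^1_{\ast,\bk}$, which lifts via $h$ to $S\setminus\Supp(D)$. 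Hence $H\in\Ampc(S)$. The main technical obstacle is the simultaneous verification of the four positivity conditions using the three inequalities from Lemma \ref{lem(4-2-1)}; the cylinder identification itself is a direct extension of Lemma \ref{lem(4-2-3)}.
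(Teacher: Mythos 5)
Your proposal is correct and follows essentially the same route as the paper: the case $c\ge 0$ is dispatched via Lemma \ref{lem(4-1-2)} and Lemma \ref{lem(4-2-1)} (2), and for $c<0$ you take exactly the paper's divisor $D=-\frac{c}{\beta-1}\Gamma+\sum_{i=1}^r\alpha_i\bigl(\frac{c}{\beta-1}-\frac{b_i}{\alpha_i}\bigr)E_i'+\bigl(a+\sum_{i=1}^rb_i+\frac{\beta-2}{\beta-1}c\bigr)(E_{r+1}+E_{r+1}')$, with the same positivity checks and the same identification of $S\setminus\Supp(D)$ with $\wS\setminus\Supp\bigl(\wGamma+\wD_0+\sum_{i=1}^r(\wF_i-\wE_i)+\wF_{r+1}\bigr)\simeq\bA^1_{\bk}\times\bA^1_{\ast,\bk}$ via Lemma \ref{lem(2-3)} (2). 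The only difference is that you spell out the coefficient matching and the contraction to $\bF_1$ explicitly, which the paper leaves implicit.
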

\begin{proof}
If $c \ge 0$, then we know $H \in \Ampc (S)$ by Lemma \ref{lem(4-1-2)} combined with Lemma \ref{lem(4-2-1)} (2). 
From now on, we thus assume $c<0$. 
By noting Lemma \ref{lem(4-2-1)} (3) and the assumption, take the effective $\bQ$-divisor:
\begin{align*}
D := &-\frac{c}{\beta -1} \Gamma + \sum _{i=1} ^r\alpha _i \left( \frac{c}{\beta -1} - \frac{b_i}{\alpha _i} \right) E_i' + \left( a + \sum _{i=1}^rb_i + \frac{\beta -2}{\beta -1}c\right)(E_{r+1} + E_{r+1}') 
\end{align*}
on $S$. 
Then $D \sim _{\bQ} H$. 
Moreover, by using Lemma \ref{lem(2-3)} (2) we know: 
\begin{align*}
S \backslash \Supp (D) &\simeq \wS \backslash \Supp \left(\wGamma + \wD _0 + \sum _{i=1}^r(\wF _i - \wE _i) + \wF _{r+1}\right)
\simeq \bA ^1_{\bk} \times \bA ^1_{\ast ,\bk}. 
\end{align*}
Thus, $H \in \Ampc (S)$.
\end{proof}
\begin{proof}[Proof of Lemma \ref{lem(4-2-2)}]
By Lemmas \ref{lem(4-2-3)} and \ref{lem(4-2-4)}, we can assume $r>0$ and further that either $\beta = 1$ or $\beta \ge 2$ and $\frac{b_r}{\alpha _r} \ge \frac{c}{\beta -1}$. 
For simplicity, we set $d := a + \sum _{i=1}^rb_i - \frac{b_r}{\alpha _r} + c$ and $d' := a + \sum _{i=1}^rb_i + (\beta -2) \cdot \frac{b_r}{\alpha _r}$. 
Note that $d >0$ and $d'>0$. 
Indeed, the former follows from Lemma \ref{lem(4-2-1)} (1), and the latter follows from Lemma \ref{lem(4-2-1)} (1) (resp. (3)) when $\beta = 1$ (resp. $\beta \ge 2$ and $\frac{b_r}{\alpha _r} \ge \frac{c}{\beta -1}$). 

Suppose that $\frac{b_1}{\alpha _1} = \frac{b_r}{\alpha _r}$. 
Then we know $n \cdot \frac{b_r}{\alpha _r} = \sum _{i=1}^rb_i + (\beta -2) \cdot \frac{b_r}{\alpha _r}$ because of $n+2=\alpha + \beta$ and $\frac{b_i}{\alpha _i} = \frac{b_r}{\alpha _r}$ for $i=1,\dots ,r$. 
Note $n - \beta + 1 \ge 0$ by virtue of $\alpha + \beta = n+2$ and $\alpha \ge 1$. 
Letting $\varepsilon$ be a positive rational number satisfying: 
\begin{align*}
\varepsilon < 
\left\{ \begin{array}{ll}
\min \left\{ \frac{d}{n-\beta +1},\ \frac{d'}{n}\right\} & \text{if $n-\beta +1>0$} \\ 
\frac{d'}{n} & \text{if $n-\beta +1=0$} 
\end{array}\right. ,
\end{align*}
we take the effective $\bQ$-divisor:
\begin{align*}
D := \left( -\frac{b_r}{\alpha _r} + \varepsilon \right) \Gamma + \sum _{i=1} ^r\alpha _i\varepsilon E_i + \left\{ d - (n - \beta + 1)\varepsilon \right\} E_{r+1} + \left( d' - n\varepsilon \right) E_{r+1}'
\end{align*}
on $S$. 
Then $D \sim _{\bQ} H$. 
Moreover, by using Lemma \ref{lem(2-3)} (2) we know: 
\begin{align*}
S \backslash \Supp (D) &\simeq \wS \backslash \Supp \left(\wGamma + \wD _0 + \sum _{i=1}^r (\wF _i -\wE _i') + \wF _{r+1}\right) 
\simeq \bA ^1_{\bk} \times \bA ^1_{\ast ,\bk}. 
\end{align*}
Thus, $H \in \Ampc (S)$.

In what follows, we can assume that $\frac{b_1}{\alpha _1} < \frac{b_r}{\alpha _r}$. 
Put $r' := \max \left\{ i \in \{ 1,\dots ,r-1\} \, |\, \frac{b_i}{\alpha _i} < \frac{b_r}{\alpha _r}\right\}$ and $\alpha ' := \sum _{i=1}^{r'}\alpha _i$. 
Then we know $n \cdot \frac{b_r}{\alpha _r} + \sum _{i=1}^{r'} \alpha _i \left( \frac{b_i}{\alpha _i} - \frac{b_r}{\alpha _r} \right) = \sum _{i=1}^rb_i + (\beta -2) \cdot \frac{b_r}{\alpha _r}$ because of $n+2=\alpha + \beta$ and $\frac{b_i}{\alpha _i} = \frac{b_r}{\alpha _r}$ for $i=r'+1,\dots ,r$. 
Note $n-\alpha ' \ge n - \alpha ' - \beta + 1 \ge 0$ by virtue of $\alpha + \beta = n+2$ and $\alpha \ge \alpha '+1$. 
Letting $\varepsilon$ be a positive rational number satisfying: 
\begin{align*}
\varepsilon < 
\left\{ \begin{array}{ll}
\min \left\{ \frac{b_r}{\alpha _r} - \frac{b_1}{\alpha _1},\ \frac{d}{n-\alpha '-\beta +1},\ \frac{d'}{n-\alpha '}\right\} & \text{if $n-\alpha '-\beta +1>0$} \\
\min \left\{ \frac{b_r}{\alpha _r} - \frac{b_1}{\alpha _1},\ \frac{d'}{n-\alpha '}\right\} & \text{if $n-\alpha '>n-\alpha '-\beta +1= 0$} \\
\frac{b_r}{\alpha _r} - \frac{b_1}{\alpha _1} & \text{if $n-\alpha '= 0$} 
\end{array}\right. ,
\end{align*}
we take the effective $\bQ$-divisor:
\begin{align*}
D := &\left( -\frac{b_r}{\alpha _r} + \varepsilon \right) \Gamma + \sum _{i=1} ^{r'}\alpha _i \left( \frac{b_r}{\alpha _r} - \frac{b_i}{\alpha _i} - \varepsilon \right) E_i' + \sum _{j=r'+1} ^r\alpha _j\varepsilon E_j \\
&\quad+ \left\{ d - (n-\alpha '-\beta +1)\varepsilon \right\} E_{r+1} + \left\{ d' - (n-\alpha ')\varepsilon \right\} E_{r+1}'
\end{align*}
on $S$. 
Then $D \sim _{\bQ} H$. 
Moreover, by using Lemma \ref{lem(2-3)} (2) we know: 
\begin{align*}
S \backslash \Supp (D) \simeq \wS \backslash \Supp \left(\wGamma + \wD _0 + \sum _{i=1}^{r'} (\wF _i - \wE _i) + \sum _{j=r'+1}^{r} (\wF _j - \wE _j') + \wF _{r+1}\right) \simeq \bA ^1_{\bk} \times \bA ^1_{\ast ,\bk}. 
\end{align*}
Thus, $H \in \Ampc (S)$. 
\end{proof}
\subsection{Case $(s,t)=(0,1)$ and $n \ge 2$}\label{4-3}
In this subsection, we prove the case of $(s,t)=(0,1)$ in Lemma \ref{lem(4-2)} under the assumption that $n \ge 2$. 
We use the notation from \S \S \ref{3-1}. 
Assume that $(s,t)=(0,1)$ and $n \ge 2$. 
If $(-K_{\wS})^2 \ge 6-n$ or $\gamma \ge 4$, then we obtain $\Ampc (S) = \Amp (S)$ by Lemma \ref{lem(4-1)}. 
From now on, we assume further that $(-K_{\wS})^2 = 5-n$, which implies $\alpha + \gamma = n+3$, and $\gamma \in \{ 2,3\}$. 

Let $H \in \Amp (S)$. 
Since $\Amp (S) \subseteq \Cl (S)_{\bQ} = \bQ [F] \oplus \left( \bigoplus _{i=1}^r\bQ [E_i] \right)$, we can write:
\begin{align*}
H \sim _{\bQ} aF + \sum _{i=1}^rb_iE_i
\end{align*}
for some rational numbers $a,b_1,\dots ,b_r$. 
We note $b_i = -\alpha _i(H \cdot E_i)<0$ for $i=1,\dots ,r$. 
Then the following lemma holds: 
\begin{lem}\label{lem(4-3-1)}
$2a + \sum _{i=1}^r2b_i -(4-\gamma) \cdot \frac{b_r}{\alpha _r} > 0$. 
\end{lem}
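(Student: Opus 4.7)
The plan is to adapt the template of the proof of Lemma~\ref{lem(4-2-1)}, splitting according to $\gamma \in \{2,3\}$ and invoking Lemma~\ref{linear system} parts (5) and (6) respectively.

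First, the hypotheses $(-K_{\wS})^2 = 5-n$ with $s=0$, $t=1$ give $\alpha+\gamma = n+3$, so $\alpha = n+1$ when $\gamma=2$ and $\alpha = n$ when $\gamma=3$. Substituting into the dimension bounds of Lemma~\ref{linear system} (5) and (6) yields $\dim|\wDelta^{(5)}| \geq 0$ and $\dim|\wDelta^{(6)}| \geq 1$. I would then pick a general member $\wC$ of the appropriate linear system; according to Figure~\ref{C} (f) or (g), its generic configuration shows that $\wC$ is not supported entirely on the exceptional locus of $f$, so $f_{\ast}(\wC)$ is a nonzero effective $\bQ$-divisor on $S$. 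Ampleness of $H$ then gives $(H \cdot f_{\ast}(\wC)) > 0$.

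The core calculation is to compute $(H \cdot f_{\ast}(\wC))$ explicitly as a linear combination of $a$ and the $b_i$'s. Writing $\wDelta^{(5)}$ or $\wDelta^{(6)}$ out and discarding the $(-2)$-curve components (and $\wD_0$, also exceptional for $n \geq 2$) under pushforward, then eliminating $E_{r+1}$ via the relation $2E_{r+1} \sim_{\bQ} F$ to express everything in the basis $\bQ[F] \oplus \bigoplus_{i=1}^{r} \bQ[E_i]$, one obtains the class of $f_{\ast}(\wC)$ in $\Cl(S)_{\bQ}$. Pairing with $H$ and using the convention $b_i = -\alpha_i(H \cdot E_i)$ built into the expansion of $H$, exactly as in Lemma~\ref{lem(4-2-1)}, yields $(H \cdot f_{\ast}(\wC)) = a + \sum_i b_i - b_r/\alpha_r$ in the case $\gamma=2$ and $(H \cdot f_{\ast}(\wC)) = 2a + 2\sum_i b_i - b_r/\alpha_r$ in the case $\gamma=3$. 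Doubling the first value (and taking the second as is), positivity becomes exactly $2a + 2\sum_i b_i - (4-\gamma)b_r/\alpha_r > 0$.

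The main obstacle is managing the intersection computation on $S$: because $\wD_0$ is contracted, $F$ is not a ``clean'' fibre class (it passes through the singular point coming from $\wD_0$) and the pairings $(F)^2$, $(F\cdot E_{r+1})$ depend on the detailed singular-fibre structure. The identities $2E_{r+1} \sim_{\bQ} F$ and $(H \cdot E_i) = -b_i/\alpha_i$ encode the necessary information compactly, and the calculation proceeds in parallel to the analogous step in the proof of Lemma~\ref{lem(4-2-1)}. The factor $(4-\gamma)$ emerges because $\wDelta^{(6)}$ doubles the coefficients of $\wDelta^{(5)}$ on the ``bulk'' components while keeping $+\wE_r$ at coefficient $1$, producing the asymmetry between the $\gamma=2$ and $\gamma=3$ bounds.
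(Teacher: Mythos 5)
Your proposal follows exactly the paper's argument: the same case split $\gamma\in\{2,3\}$, the same use of Lemma \ref{linear system} (5) and (6) (with $\dim|\wDelta^{(5)}|\ge 0$, $\dim|\wDelta^{(6)}|\ge 1$ from $\alpha+\gamma=n+3$), and the same step of pushing forward a general member and pairing with the ample class $H$ to get $(H\cdot f_{\ast}(\wC))=a+\sum_i b_i-\frac{b_r}{\alpha_r}$ for $\gamma=2$ and $2a+\sum_i 2b_i-\frac{b_r}{\alpha_r}$ for $\gamma=3$, which is precisely the paper's computation. The proposal is correct and essentially identical in approach.
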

\begin{proof}
We consider two cases $\gamma =2$ and $\gamma =3$ separately. 

Assume that $\gamma =2$. 
Let $\widetilde{\Delta}^{(4)}$ be the same as in Lemma \ref{linear system} (4). 
By Lemma \ref{linear system} (4), we notice $\dim |\widetilde{\Delta}^{(4)}| \ge 0$ because of $n+3=\alpha + 2$. 
Hence, $|\widetilde{\Delta}^{(4)}| \not= \emptyset$, so that we can take a general member $\wC$ of $|\widetilde{\Delta}^{(4)}|$. 
Note  $f_{\ast}(\wC ) \sim _{\bQ} nF -\sum _{i=1}^r\alpha _iE_i +E_r - E_{r+1} \not\sim _{\bQ} 0$. 
Thus, we obtain $0 < 2(H \cdot f_{\ast}(\wC )) = 2a + \sum _{i=1}^r2b_i - \frac{2b_r}{\alpha _r}$ (see also Figure \ref{C} (d)). 

Assume that $\gamma =3$. 
Let $\widetilde{\Delta}^{(5)}$ be the same as in Lemma \ref{linear system} (5). 
By Lemma \ref{linear system} (5), we notice $\dim |\widetilde{\Delta}^{(5)}| \ge 1$ because of $n+3=\alpha + 3$. 
Hence, $|\widetilde{\Delta}^{(5)}| \not= \emptyset$, so that we can take a general member $\wC$ of $|\widetilde{\Delta}^{(5)}|$. 
Note $f_{\ast}(\wC ) \sim _{\bQ} 2nF -\sum _{i=1}^r2\alpha _iE_i +E_r - 3E_{r+1} \not\sim _{\bQ} 0$. 
Thus, we obtain $0 < (H \cdot f_{\ast}(\wC )) = 2a + \sum _{i=1}^r2b_i - \frac{b_r}{\alpha _r}$ (see also Figure \ref{C} (e)). 
\end{proof}
For simplicity, we set $d := 2a +\sum _{i=1}^r2b_i -(4-\gamma ) \cdot \frac{b_r}{\alpha _r}$. 
Note that $d >0$ by Lemma \ref{lem(4-3-1)}. 
Let $\wGamma$ be the same as in Lemma \ref{rational curve} (2). 
Notice that the configuration of $\wS$ is as in Figure \ref{Gamma} (b) (resp. (c)) when $\gamma = 2$ (resp. $\gamma =3$). 
Put $\Gamma := f_{\ast}(\wGamma) \sim _{\bQ} nF - \sum _{i=1}^r\alpha _iE_i - (\gamma -2)E_{r+1}$. 
Then the following assertion holds: 
\begin{lem}\label{(4-3-2)}
With the same notations and the assumptions as above, we obtain $H \in \Ampc (S)$. 
\end{lem}
\begin{proof}
Without less of generality, we may assume $\frac{b_1}{\alpha _1} \le \frac{b_2}{\alpha _2} \le \dots \le \frac{b_r}{\alpha _r}$. 

Suppose that $\frac{b_1}{\alpha _1} = \frac{b_r}{\alpha _r}$. 
Then we know $2n \cdot \frac{b_r}{\alpha _r} - (\gamma -2) \cdot \frac{b_r}{\alpha _r} = \sum _{i=1}^r2b_i - (4-\gamma ) \cdot \frac{b_r}{\alpha _r}$ because of $n+3 = \alpha + \gamma$ and $\frac{b_i}{\alpha _i} = \frac{b_r}{\alpha _r}$ for $i=1,\dots ,r$. 
Note $2n - \gamma + 2 > 0$ by virtue of $n \ge 2$ and $\gamma \in \{ 2,3\}$. 
Letting $\varepsilon$ be a positive rational number satisfying $\varepsilon < \frac{d}{2n-\gamma +2}$, 
we take the effective $\bQ$-divisor:
\begin{align*}
D := &\left( -\frac{b_r}{\alpha _r} + \varepsilon \right) \Gamma + \sum _{i=1} ^r\alpha _i\varepsilon E_i + \left\{d -(2n -\gamma +2)\varepsilon \right\} E_{r+1}
\end{align*}
on $S$. 
Then $D \sim _{\bQ} H$. 
Moreover, by using Lemma \ref{lem(2-3)} (2) we know: 
\begin{align*}
S \backslash \Supp (D) \simeq \wS \backslash \left( \wGamma + \wD _0 + \sum _{i=1}^r (\wF _i - \wE _i') + \wF _{r+1}\right) 
\simeq \bA ^1_{\bk} \times \bA ^1_{\ast ,\bk}. 
\end{align*}
Thus, $H \in \Ampc (S)$.

In what follows, we can assume that $\frac{b_1}{\alpha _1} < \frac{b_r}{\alpha _r}$. 
Put $r' := \max \left\{ i \in \{ 1,\dots ,r-1\}\,|\, \frac{b_i}{\alpha _i}<\frac{b_r}{\alpha _r}\right\}$ and $\alpha ' := \sum _{i=1}^{r'}\alpha _i$. 
Then we know $2n \cdot \frac{b_r}{\alpha _r} + 2\sum _{i=1}^{r'}\alpha _i \left( \frac{b_i}{\alpha _i} - \frac{b_r}{\alpha _r} \right)  - (\gamma -2) \cdot \frac{b_r}{\alpha _r} = \sum _{i=1}^r2b_i - (4-\gamma ) \cdot \frac{b_r}{\alpha _r}$ because of $n+3 = \alpha + \gamma$ and $\frac{b_i}{\alpha _i} = \frac{b_r}{\alpha _r}$ for $i=r'+1,\dots ,r$. 
Note $2n -\gamma +2 -2\alpha '  \ge 0$ by virtue of $\alpha + \gamma = n + 3$ and $\alpha \ge \alpha '+1$. 
Letting $\varepsilon$ be a positive rational number satisfying: 
\begin{align*}
\varepsilon < 
\left\{ \begin{array}{ll}
\min \left\{ \frac{b_r}{\alpha _r} - \frac{b_1}{\alpha _1},\ \frac{d}{2n -\gamma +2 -2\alpha '} \right\} &\text{if $2n -\gamma +2 -2\alpha ' > 0$} \\
\frac{b_r}{\alpha _r} - \frac{b_1}{\alpha _1} &\text{if $2n -\gamma +2 -2\alpha ' = 0$} 
\end{array}\right. ,
\end{align*}
we take the effective $\bQ$-divisor:
\begin{align*}
D := &\left( -\frac{b_r}{\alpha _r} + \varepsilon \right) \Gamma + \sum _{i=1} ^{r'}\alpha _i \left( \frac{b_r}{\alpha _r} - \frac{b_i}{\alpha _i} - \varepsilon \right) E_i' + \sum _{j=r'+1} ^r\alpha _j\varepsilon E_j + \left\{d -(2n -2\alpha ' -\gamma +2)\varepsilon \right\} E_{r+1}
\end{align*}
on $S$. 
Then $D \sim _{\bQ} H$. 
Moreover, by using Lemma \ref{lem(2-3)} (2) we know: 
\begin{align*}
S \backslash \Supp (D) \simeq \wS \backslash \Supp \left(\wGamma + \wD _0 + \sum _{i=1}^{r'} (\wF _i - \wE _i) + \sum _{j=r'+1}^{r} (\wF _j - \wE _j') + \wF _{r+1}\right) \simeq \bA ^1_{\bk} \times \bA ^1_{\ast ,\bk}. 
\end{align*}
Thus, $H \in \Ampc (S)$. 
\end{proof}
\subsection{Case $(s,t)=(0,0)$ and $n \ge 2$}\label{4-4}
In this subsection, we prove the case of $(s,t)=(0,0)$ in Lemma \ref{lem(4-2)} under the assumption that $n \ge 2$. 
We use the notation from \S \S \ref{3-1}. 
Assume that $(s,t)=(0,0)$ and $n \ge 2$. 
If $(-K_{\wS})^2 \ge 7-n$, then we obtain $\Ampc (S) = \Amp (S)$ by Lemma \ref{lem(4-1)}. 
From now on, we assume further that $(-K_{\wS})^2 \in \{ 6-n,\ 5-n\}$; namely, $\alpha \in \{ n+2,\ n+3\}$. 

Let $H \in \Amp (S)$. 
Since $\Amp (S) \subseteq \Cl (S)_{\bQ} = \bQ [F] \oplus \left( \bigoplus _{i=1}^r\bQ [E_i] \right)$, we can write: 
\begin{align*}
H \sim _{\bQ} aF + \sum _{i=1}^rb_iE_i
\end{align*}
for some rational numbers $a,b_1,\dots ,b_r$. 
Here, we note $b_i = -\alpha _i(H \cdot E_i)<0$ for $i=1,\dots ,r$. 
Without less of generality, we may assume $\frac{b_1}{\alpha _1} \le \frac{b_2}{\alpha _2} \le \dots \le \frac{b_r}{\alpha _r}$. 
By noting $\alpha > n+1$, put $r' := \min \{ i \in \{1,\dots ,r\} \,|\, \alpha _1 + \dots + \alpha _i \ge n+1\}$ and $\alpha ' := \sum _{i=1}^{r'}\alpha _i$. 
Then the following lemma holds: 
\begin{lem}\label{lem(4-4-1)}
$a +\sum _{i=1}^{r'}b_i +(n+1-\alpha ')\cdot \frac{b_{r'}}{\alpha _{r'}} > 0$. 
\end{lem}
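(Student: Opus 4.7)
The plan is to apply Lemma \ref{linear system} (7), which provides a nonempty linear system $|\wDelta^{(7)}|$ under the running assumption $\alpha \ge n+1$, and to extract the desired inequality from the positivity of $(H \cdot f_{\ast}(\wC))$ for any member $\wC \in |\wDelta^{(7)}|$.

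First I would compute the pushforward $f_{\ast}(\wC)$ in $\Cl(S)_{\bQ}$. Because $f$ contracts exactly the irreducible curves of self-intersection $\le -2$ -- which, given $n \ge 2$, includes $\wD_0$ as well as all middle chain components $\wD_{i,\lambda}$ for $1 \le \lambda \le \alpha_i-1$ -- the only non-contracted summands of $\wDelta^{(7)}$ are $\wF$ and the $(-1)$-curves $\wE_i = \wD_{i,\alpha_i}$. Carefully combining the two sums in $\wDelta^{(7)}$, the coefficient of $\wE_{r'}$ becomes $-\alpha_{r'} + (\alpha'-n-1)$, so
\[
f_{\ast}(\wC) \sim_{\bQ} nF - \sum_{i=1}^{r'-1} \alpha_i E_i + (\alpha' - n - 1 - \alpha_{r'}) E_{r'}.
\]
Since $(\wDelta^{(7)} \cdot \wF) = 1$, the class $f_{\ast}(\wC)$ is a nonzero effective $\bQ$-divisor, so ampleness of $H$ yields $(H \cdot f_{\ast}(\wC)) > 0$.

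Next I would evaluate this intersection number. In the case $(s,t) = (0,0)$ with $n \ge 2$, the basis $\{F, E_1, \ldots, E_r\}$ of $\Cl(S)_{\bQ}$ is orthogonal for the intersection pairing, with $F^2 = 1/n$ and $E_i^2 = -1/\alpha_i$; this diagonality reflects that $\wD_0$ meets only the $(-1)$-curves $\wE_j'$ among the relevant components, and is disjoint from the $\wE_j$ and from the $\wD_{j,\lambda}$ with $\lambda \ge 1$. A direct expansion then gives
\[
(H \cdot f_{\ast}(\wC)) = a + \sum_{i=1}^{r'-1} b_i + \frac{(n+1-\alpha')b_{r'}}{\alpha_{r'}} + b_{r'}.
\]

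Finally, the lemma follows by rearranging: since $b_{r'} = -\alpha_{r'}(H \cdot E_{r'}) < 0$,
\[
a + \sum_{i=1}^{r'-1} b_i + \frac{(n+1-\alpha')b_{r'}}{\alpha_{r'}} = (H \cdot f_{\ast}(\wC)) - b_{r'} > -b_{r'} > 0.
\]
The main subtlety is the bookkeeping of the coefficient of $\wE_{r'}$ coming from the two separate sums in $\wDelta^{(7)}$; the excess $b_{r'}$ term produced by this combination is exactly what the slack $-b_{r'} > 0$ absorbs to yield the strict inequality.
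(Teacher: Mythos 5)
Your proposal is correct and follows essentially the same route as the paper: take a general member $\wC$ of $|\wDelta^{(7)}|$ (nonempty by Lemma~\ref{linear system}~(7)), push it forward to $S$, and use ampleness of $H$ against the nonzero effective class $f_{\ast}(\wC)$, whose nonvanishing is guaranteed by the coefficient $n$ of $F$ in the basis $\{F,E_1,\dots,E_r\}$ of $\Cl(S)_{\bQ}$. Your bookkeeping of the $\wE_{r'}$-coefficient gives $(H\cdot f_{\ast}(\wC))=a+\sum_{i=1}^{r'-1}b_i+(n+1-\alpha')\tfrac{b_{r'}}{\alpha_{r'}}+b_{r'}$ and you conclude via $b_{r'}<0$, whereas the paper asserts exact equality of $(H\cdot f_{\ast}(\wC))$ with the left-hand side; your version, which still yields the strict inequality (with room to spare), is the more carefully computed one.
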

\begin{proof}
Let $\widetilde{\Delta}^{(6)}$ be the same as in Lemma \ref{linear system} (6). 
Then we obtain $\dim |\widetilde{\Delta}^{(6)}| \ge 0$ by Lemma \ref{linear system} (6), 
Hence, $|\widetilde{\Delta} ^{(6)}| \not= \emptyset$, so that we can take a general member $\wC$ of $|\widetilde{\Delta}^{(6)}|$. 
Notice $f_{\ast}(\wC ) \sim _{\bQ} nF - \sum _{i=1}^{r'}\alpha _iE_i + (n+1 - \alpha ')E_{r'} \not\sim _{\bQ} 0$. 
Thus, we obtain $0 < (H \cdot f_{\ast}(\wC )) = a +\sum _{i=1}^{r'}b_i +(n+1-\alpha ') \cdot \frac{b_{r'}}{\alpha _{r'}}$ (see also Figure \ref{C} (f)). 
\end{proof}
For simplicity, we set $d := a +\sum _{i=1}^{r'}b_i +(n+1-\alpha ') \cdot \frac{b_{r'}}{\alpha _{r'}}$. 
Note that $d>0$ by Lemma \ref{lem(4-4-1)}. 
Let $\wGamma$ be the same as in Lemma \ref{rational curve} (3). 
Notice that the configuration of $\wS$ is as in Figure \ref{Gamma} (d). 
Meanwhile, there exists a unique fiber $\wF _0$ of $g$ such that $\wGamma \cap \wD _0 \cap \wF _0 \not= \emptyset$ because $(\wGamma \cdot \wD _0) = 1$. 
Put $\Gamma := f_{\ast}(\wGamma ) \sim _{\bQ} (n+1)F - \sum _{i=1}^r\alpha _iE_i$ and $F_0 := f_{\ast}(\wF _0)$. 
Then the following assertion holds: 
\begin{lem}\label{lem(4-4-2)}
With the same notations and the assumptions as above, we obtain $H \in \Ampc (S)$. 
\end{lem}
\begin{proof}
Suppose that $\frac{b_1}{\alpha _1} = \frac{b_{r'}}{\alpha _{r'}}$. 
Then we know $(n+1) \cdot \frac{b_{r'}}{\alpha _{r'}} = \sum _{i=1}^{r'}b_i + (n+1-\alpha ') \cdot \frac{b_{r'}}{\alpha _{r'}}$ because of $\frac{b_i}{\alpha _i} = \frac{b_{r'}}{\alpha _{r'}}$ for $i=1,\dots ,r'$. 
Letting $\varepsilon$ be a positive rational number satisfying $\varepsilon < \frac{d}{n+1}$, 
we take the effective $\bQ$-divisor:
\begin{align*}
D := \left( -\frac{b_{r'}}{\alpha _{r'}} + \varepsilon \right) \Gamma + \left\{ d -(n+1)\varepsilon \right\} F_0 + \sum _{i=1} ^r\alpha _i\left( \frac{b_i}{\alpha _i} - \frac{b_{r'}}{\alpha _{r'}} + \varepsilon \right) E_i 
\end{align*}
on $S$. 
Then $D \sim _{\bQ} H$. 
Moreover, by using Lemma \ref{lem(2-3)} (3) we know: 
\begin{align*}
S \backslash \Supp (D) \simeq \wS \backslash \Supp \left( \wGamma + \wD _0 + \wF _0 + \sum _{i=1}^r (\wF _i - \wE _i') \right) 
\simeq \bA ^1_{\bk} \times \bA ^1_{\ast ,\bk}. 
\end{align*}
Thus, $H \in \Ampc (S)$.

In what follows, we can assume that $\frac{b_1}{\alpha _1} < \frac{b_{r'}}{\alpha _{r'}}$. 
Put $r'' := \max \left\{ i \in \{ 1,\dots ,r'-1\}\,|\, \frac{b_i}{\alpha _i}<\frac{b_{r'}}{\alpha _{r'}}\right\}$ and $\alpha '' := \sum _{i=1}^{r''}\alpha _i$. 
Then we know $(n+1) \cdot \frac{b_{r'}}{\alpha _{r'}} + \sum _{i=1}^{r''}\alpha _i \left( \frac{b_i}{\alpha _i} - \frac{b_{r'}}{\alpha _{r'}} \right) = \sum _{i=1}^{r'}b_i + (n+1-\alpha ') \cdot \frac{b_{r'}}{\alpha _{r'}}$ because of $\frac{b_i}{\alpha _i} = \frac{b_{r'}}{\alpha _{r'}}$ for $i=r''+1,\dots ,r'$. 
Note $n+1-\alpha '' \ge 1$ by virtue of $n+1 \ge \alpha -1 \ge \alpha ' \ge \alpha ''+1$. 
Letting $\varepsilon$ be a positive rational number satisfying $\varepsilon < \min \left\{ \frac{b_{r'}}{\alpha _{r'}} - \frac{b_1}{\alpha _1},\ \frac{d}{n+1-\alpha ''}\right\}$, 
we take the effective $\bQ$-divisor:
\begin{align*}
D := &\left( -\frac{b_{r'}}{\alpha _{r'}} + \varepsilon \right) \Gamma + \left\{ d -(n+1-\alpha '')\varepsilon \right\} F_0 \\
&\quad+ \sum _{i=1} ^{r''}\alpha _i\left( \frac{b_{r'}}{\alpha _{r'}} - \frac{b_i}{\alpha _i} - \varepsilon \right) E_i'
+ \sum _{j=r''+1} ^r\alpha _j\left( \frac{b_j}{\alpha _j} - \frac{b_{r'}}{\alpha _{r'}} + \varepsilon \right) E_j
\end{align*}
on $S$. 
Then $D \sim _{\bQ} H$. 
Moreover, by using Lemma \ref{lem(2-3)} (3) we know: 
\begin{align*}
S \backslash \Supp (D) \simeq \wS \backslash \Supp \left( \wGamma + \wD _0 + \wF _0 + \sum _{i=1}^{r''} (\wF _i - \wE _i) + \sum _{j=r''+1} ^r (\wF _j - \wE _j') \right) \simeq \bA ^1_{\bk} \times \bA ^1_{\ast ,\bk}. 
\end{align*}
Thus, $H \in \Ampc (S)$. 
\end{proof}
\subsection{Case $s+t \le 1$ and $n \le 1$}\label{4-5}
In this subsection, we prove Lemma \ref{lem(4-2)} under the assumption that $n \le 1$. 
We use the notation from \S \S \ref{3-1}. 

Assume that $(s,t)=(1,0)$. 
If $(-K_{\wS})^2 \ge 6-n$ or $\beta ' \ge 2$, then we obtain $\Ampc (S) = \Amp (S)$ by Lemma \ref{lem(4-1)} because $(-K_{\wS})^2 + \beta ' \ge 7-n$. 
From now on, we assume further that $(-K_{\wS})^2 = 5-n$ and $\beta ' = 1$; namely, $\alpha + \beta = n+2$. 
Then $\wS$ is a weak del Pezzo surface whose type is one of the following according to $n$: 
\begin{itemize}
\item $n=0$: $A_2$ and $A_1$; 
\item $n=1$: $A_3$ (with $4$ lines), $A_2$, $2A_1$ (with $8$ lines) and $A_1$. 
\end{itemize}
Notice that $\wS$ satisfies the condition $(\ast _2)$ except for ${\rm Dyn}(S) \not= 2A_1$ by Table \ref{table}. 
That is, if ${\rm Dyn}(S) \not= 2A_1$, we obtain $\Ampc (S) = \Amp (S)$ because Lemmas \ref{lem(4-1)} and \ref{lem(4-2)} are shown for $n \ge 2$. 
Meanwhile, if ${\rm Dyn}(S) = 2A_1$, we also obtain $\Ampc (S) = \Amp (S)$ by the following lemma: 
\begin{lem}
Assume that $(-K_{\wS})^2=4$ and $S$ is of type $2A_1$ (with $8$ lines). 
Then $\Ampc (S) = \Amp (S)$. 
\end{lem}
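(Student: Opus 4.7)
The plan is to adapt the framework of Subsection~\ref{4-2} to the present situation. Here $n = 1$ forces the section $\wD_0$ to remain as a $(-1)$-curve on $\wS$, so $D_0$ contributes a nonzero class to $\Cl(S)_{\bQ}$. Under the hypotheses $\alpha_1 = 2$ and $\beta = \beta' = 1$, the two $(-2)$-curves $\wD_{1,1}$ and $\wD_{2,0}$ are contracted by $f$, and one has $\Cl(S)_{\bQ} = \bQ[D_0] \oplus \bQ[F] \oplus \bQ[E_1] \oplus \bQ[E_2]$; accordingly I write any $H \in \Amp(S)$ as $H \sim_{\bQ} a_0 D_0 + aF + bE_1 + cE_2$.

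First, I would record the positivity inequalities obtained by intersecting $H$ with every irreducible curve. Using $F$, $D_0$, $E_1$, $E_1'$, $E_2$, $E_2'$ and the $(-1)$-curve $\Gamma := f_\ast(\wGamma)$ from Lemma~\ref{rational curve}~(1) (with $\wGamma \sim \wD_0 + \wF - \wD_{1,1} - 2\wD_{1,2}$, so that $\Gamma \sim_{\bQ} D_0 + F - 2E_1$ on $S$), a direct calculation yields $a_0 > 0$, $-2a_0 < b < 0$, $-a_0 < c < a_0$, $2a + c > a_0$ and $a_0/2 + a + b + c/2 > 0$. The crucial additional input is that this specific weak del Pezzo surface of degree four and type $2A_1$ with $8$ lines has precisely eight $(-1)$-curves on $\wS$, and the two not yet accounted for have classes $\hM + \hF - E_1 - E_3$ and $\hM + \hF - E_1 - E_4$ in the $\bF_1$-blowup basis $\{\hM, \hF, E_1, E_2, E_3, E_4\}$ for $\Pic(\wS)$ (here $\hM = \wGamma$, and $E_1, \dots, E_4$ denote the exceptional classes of $\wS \to \bF_1$). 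Intersecting $H$ with their pushforwards supplies two further inequalities $a + b/2 + c > 0$ and $a + b/2 > 0$, which together with the preceding constraints close the system.

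Next, for a sufficiently small $\varepsilon > 0$, I would construct an effective $\bQ$-divisor of the shape
\[
D := -\tfrac{b}{2}\,\Gamma + \Bigl(a_0 + \tfrac{b}{2}\Bigr) D_0 + x_3 E_1 + x_4 E_1' + x_5 E_2 + x_6 E_2'
\]
on $S$, where $x_3, x_4, x_5, x_6$ are determined by $x_3 = x_4$, $x_4 + x_6 = a + b/2$ and $x_5 - x_6 = c$, with the remaining degree of freedom used to make every coefficient strictly positive (possible precisely because $a + b/2 > 0$ and $a + b/2 + c > 0$). One checks $D \sim_{\bQ} H$ using $\Gamma \sim_{\bQ} D_0 + F - 2E_1$ and $E_i' \sim_{\bQ} F - E_i$; the support is exactly $\Gamma \cup D_0 \cup F_1 \cup F_2$. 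The contraction $h:\wS \to \bF_1$ of Lemma~\ref{rational curve}~(1) then identifies $S \setminus \Supp(D)$ with $\bF_1 \setminus (\hM \cup h_\ast(\wD_0) \cup \hF_A \cup \hF_B)$, where $h_\ast(\wD_0) \sim \hM + \hF$ and $\hF_A, \hF_B$ are the two fibers of $\bF_1$ corresponding to the two singular fibers of $g$. Lemma~\ref{lem(2-3)}~(2) applied to $\hM$, $h_\ast(\wD_0)$ and $\hF_A$, followed by the removal of $\hF_B$, exhibits this open set as $\bA^1_{\bk} \times (\bA^1_{\ast,\bk} \setminus \{\mathrm{pt}\})$, which is a cylinder; hence $H \in \Ampc(S)$.

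The main technical obstacle I anticipate is the identification and irreducibility verification of the two extra $(-1)$-curves of classes $\hM + \hF - E_1 - E_3$ and $\hM + \hF - E_1 - E_4$. Among the six candidate classes $\hM + \hF - E_i - E_j$ with $i < j$, three are ruled out by having negative intersection with one of the $(-2)$-curves $\wD_{1,1}$ or $\wD_{2,0}$ (which would then be a fixed component), and the pair $(i,j) = (1,2)$ simply recovers $\wD_0$; the remaining pairs $(1,3)$ and $(1,4)$ yield the two genuinely new irreducible $(-1)$-curves. The total count of eight $(-1)$-curves then matches the classification ``$2A_1$ with $8$ lines''.
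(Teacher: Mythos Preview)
Your argument has a genuine error in the final cylinder identification. You correctly reduce $S\setminus\Supp(D)$ to $\bF_1\setminus(\hM\cup h_\ast(\wD_0)\cup\hF_A\cup\hF_B)$, but your application of Lemma~\ref{lem(2-3)}(2) confuses the two factors. In that isomorphism the $\bA^1_{\ast,\bk}$--factor is the \emph{fibre} direction of the ruling (each $\bP^1$--fibre minus the two points on the disjoint sections $\hM$ and $h_\ast(\wD_0)$), while the $\bA^1_{\bk}$--factor is the \emph{base} (the $\bP^1$ minus the point under $\hF_A$). Removing the second fibre $\hF_B$ therefore deletes a slice $\{\mathrm{pt}\}\times\bA^1_{\ast,\bk}$, so the result is $(\bA^1_{\bk}\setminus\{\mathrm{pt}\})\times\bA^1_{\ast,\bk}\simeq\bG_m^2$, not $\bA^1_{\bk}\times(\bA^1_{\ast,\bk}\setminus\{\mathrm{pt}\})$. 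Equivalently, blowing down $\hM$ identifies your open set with the complement in $\bP^2_{\bk}$ of three lines in general position, which is the torus $\bG_m^2$. Since every morphism $\bA^1_{\bk}\to\bG_m$ is constant, $\bG_m^2$ admits no cylinder structure, and your $D$ is not an $H$--polar cylinder.

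This is not merely a slip that a small tweak repairs: with the $(\ast_1)$--fibration from Table~\ref{table}, both $\wD_0$ and $\wGamma$ are $(-1)$--sections not contracted by $f$, and the class equations force $D_0$ and $\Gamma$ simultaneously into $\Supp(D)$; once two disjoint sections are removed, every general fibre of $g$ becomes $\bG_m$, and stripping further full fibres can never produce an $\bA^1$--cylinder. The paper circumvents this by passing to a \emph{different} $\bP^1$--fibration $g=\Phi_{|e_0-e_4|}$, for which the two $(-2)$--curves themselves serve as the disjoint sections and there are four type--(\ref{I-1}) singular fibres with $\alpha_i=1$. In that model $\Cl(S)_{\bQ}=\bigoplus_{i=1}^4\bQ[E_i]$, every ample class has all four coordinates positive, and the single divisor $D=\tfrac{1}{2}\varepsilon F+\sum_{i}(a_i-\varepsilon)E_i$ gives complement $\bA^1_{\bk}\times\bA^1_{\ast,\bk}$ via Lemma~\ref{lem(2-3)}(2) with only \emph{one} fibre removed.
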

\begin{proof}
By Lemma \ref{lem(2-1)}, there exist two birational morphisms $h_1:\wS \to \hS$ and $h_2:\hS \to \bP ^2_{\bk}$ such that $h_1$ is a blowing-up at a point $x_5$ and $h_2$ is a blowing-up at four points $x_1,\dots ,x_4$. 
Let $\hE _1,\dots ,\hE _4$ be irreducible components of the reduced exceptional divisor of $h_2$ such that $\hE _i = h_2^{-1}(x_i)$ for $i=1,\dots ,4$. 
By {\cite[Proposition 6.1]{CT88}}, we can assume that $x_5 \in \hE _4$ and there exists a line $L_{1,2,3}$ on $\bP ^2_{\bk}$ such that $x_1,x_2,x_3 \in L_{1,2,3}$ and $x_4 \not\in L_{1,2,3}$. 
Set $h := h_2 \circ h_1$, let $e_0$ be a total transform of a general line on $\bP ^2_{\bk}$ by $h$, let $e_5$ be the reduced exceptional divisor of $h_1$, and let $e_1,\dots ,e_4$ be total transforms on $\wS$ of $\hE _1,\dots ,\hE _4$, respectively. 
By construction of $h$, $\wS$ has exactly two $(-2)$-curves $\wD _0$ and $\wD _{\infty}$ such that: 
\begin{align*}
\wD _0 \sim e_0 - e_1 - e_2 - e_3,\ \wD _{\infty} \sim e_4 - e_5. 
\end{align*}
Moreover, by Lemma \ref{lem(2-2)} there exist $(-1)$-curves $\wE _1$, $\wE _2$, $\wE _3$, $\wE _4$, $\wE _1'$, $\wE _2'$, $\wE _3'$ and $\wE _4'$ on $\wS$ such that: 
\begin{align*}
\wE _1 \sim e_0 - e_1 - e_4,\ \wE _2 \sim e_0 - e_2 - e_4,\ \wE _3 \sim e_0 - e_3 - e_4,\ \wE _4 \sim e_5,\\
\wE _1' \sim e_1,\ \wE _2' \sim e_2,\ \wE _3' \sim e_3,\ \wE _4' \sim e_0 - e_4 - e_5. 
\end{align*}
Then the divisor $e_0-e_4$ defines a $\bP ^1$-fibration $g := \Phi _{|e_0-e_4|}:\wS \to \bP ^1_{\bk}$. 
Notice that $g$ has exactly four singular fibers $\wF _i := \wE _i' + \wE _i$ for $i=1,\dots ,4$; moreover, $\wD _0$ and $\wD _{\infty}$ are sections of $g$ (see also Figure \ref{fig(4-1)}). 
Let $\wF$ be a general fiber of $g$. 
Put $F := f_{\ast}(\wF )$, $E_i := f_{\ast}(\wE _i)$ and $E_i' := f_{\ast}(\wE _i')$ for $i=1,\dots ,4$. 
Since $\wD _{\infty} \sim \wD _0 + 2\wF - \sum _{i=1}^4\wE _i$ and $f_{\ast}(\wD _{\infty}) \sim  _{\bQ} 0$, we have $2F \sim _{\bQ} \sum _{i=1}^4E_i$. 
Hence, $\Cl (S) _{\bQ} = \bigoplus _{i=1}^4\bQ [E_i]$ because of $\Cl (\wS )_{\bQ} = \bigoplus _{i=0}^5 \bQ [e_i] = \bQ [\wD _0] \oplus \bQ [\wF ] \oplus \left( \bigoplus _{i=1}^4 \bQ [\wE _i] \right)$. 

Let $H \in \Amp (S)$. 
Since $\Amp (S) \subseteq \Cl (S)_{\bQ} = \bigoplus _{i=1}^4\bQ [E_i]$, we can write: 
\begin{align*}
H \sim _{\bQ} \sum _{i=1}^4a_iE_i
\end{align*}
for some rational numbers $a_1,\dots ,a_4$. 
Since $(E_i' \cdot E_j) = \delta _{i,j}$, we know $a_i = (H \cdot E_i') > 0$ for $i=1,\dots ,4$. 
Letting $\varepsilon$ be a positive rational number satisfying $\varepsilon < \min \{ a_1,\dots ,a_4\}$, 
we take the effective $\bQ$-divisor:
\begin{align*}
D := 2\varepsilon F + \sum _{i=1}^4(a_i-\varepsilon )E_i
\end{align*}
on $S$. 
Then $D \sim _{\bQ} H$. 
Moreover, by using Lemma \ref{lem(2-3)} (2) we know: 
\begin{align*}
S \backslash \Supp (D) &\simeq \wS \backslash \Supp \left( \wD _0 + \wD _{\infty} + \wF + \sum _{i=1}^4\wE _i \right) \simeq \bA ^1_{\bk} \times \bA ^1_{\ast ,\bk}. 
\end{align*}
Thus, $H \in \Ampc (S)$; namely, we obtain $\Ampc (S) = \Amp (S)$. 
\end{proof}
\begin{figure}[t]
\begin{center}\scalebox{0.5}{\begin{tikzpicture}
\draw [very thick] (-1,0.5)--(8,0.5);
\draw [very thick] (-1,4.5)--(8,4.5);
\node at (-1.4,0.5) {\Large $\wD _0$};
\node at (-1.4,4.5) {\Large $\wD _{\infty}$};

\draw [dashed] (1,0)--(0,3);
\draw [dashed,thick] (0,2)--(1,5);
\node at (1,-0.4) {\Large $\wE _1'$};
\node at (1,5.4) {\Large $\wE _1$};

\draw [dashed] (3,0)--(2,3);
\draw [dashed,thick] (2,2)--(3,5);
\node at (3,-0.4) {\Large $\wE _2'$};
\node at (3,5.4) {\Large $\wE _2$};

\draw [dashed] (5,0)--(4,3);
\draw [dashed,thick] (4,2)--(5,5);
\node at (5,-0.4) {\Large $\wE _3'$};
\node at (5,5.4) {\Large $\wE _3$};

\draw [dashed] (7,0)--(6,3);
\draw [dashed,thick] (6,2)--(7,5);
\node at (7,-0.4) {\Large $\wE _4'$};
\node at (7,5.4) {\Large $\wE _4$};
\end{tikzpicture}}\end{center}
\caption{Configuration of $\wS$ with $(d,{\rm Dyn}(S))=(4,2A_1)$ with $8$ lines}\label{fig(4-1)}
\end{figure}

Assume that $(s,t)=(0,1)$. 
If $(-K_{\wS})^2 \ge 6-n$ or $\gamma \ge 4$, then we obtain $\Ampc (S) = \Amp (S)$ by Lemma \ref{lem(4-1)}. 
From now on, we assume further that $(-K_{\wS})^2 = 5-n$, which implies $\alpha + \gamma = n+3$, and $\gamma \in \{ 2,3\}$. 
Then $\wS$ is a weak del Pezzo surface whose type is one of the following according to $n$: 
\begin{itemize}
\item $n=0$: $A_3$ and $2A_1$; 
\item $n=1$: $D_4$, $A_3$ (with $5$ lines), $3A_1$ and $2A_1$ (with $9$ lines). 
\end{itemize}
Hence, $\wS$ satisfies the condition $(\ast _2)$ by Table \ref{table}, so that we obtain $\Ampc (S) = \Amp (S)$ because Lemmas \ref{lem(4-1)} and \ref{lem(4-2)} are shown for $n \ge 2$. 

Thus, we can assume that $(s,t)=(0,0)$ in what follows. 
If $(-K_{\wS})^2 \ge 7-n$, then we obtain $\Ampc (S) = \Amp (S)$ by Lemma \ref{lem(4-1)}. 
From now on, we assume further that $(-K_{\wS})^2 \in \{ 6-n,\ 5-n\}$; namely, $\alpha \in \{ n+2,\ n+3\}$. 

Let $H \in \Amp (S)$. 
Since $\Amp (S) \subseteq \Cl (S)_{\bQ} = \bQ [D_0] \oplus \bQ [F] \oplus \left( \bigoplus _{i=1}^r\bQ [E_i] \right)$, we can write:
\begin{align*}
H \sim _{\bQ} a_0D_0 + aF + \sum _{i=1}^rb_iE_i
\end{align*}
for some rational numbers $a_0,a,b_1,\dots ,b_r$. 
Here, we note $b_i = -\alpha _i(H \cdot E_i)<0$ for $i=1,\dots ,r$. 
Without less of generality, we may assume $\frac{b_1}{\alpha _1} \le \frac{b_2}{\alpha _2} \le \dots \le \frac{b_r}{\alpha _r}$. 
By noting $\alpha > n+1$, put $r' := \min \{ i \in \{1,\dots ,r\} \,|\, \alpha _1 + \dots + \alpha _i \ge n+1\}$ and $\alpha ' := \sum _{i=1}^{r'}\alpha _i$. 
For simplicity, we set $d_0 := a_0 +\frac{b_{r'}}{\alpha _{r'}}$ and $d := a +\sum _{i=1}^{r'-1}b_i +(n+1-\alpha ') \cdot \frac{b_{r'}}{\alpha _{r'}}$. 
We notice $d_0>0$ and $d >0$. 
Indeed, the former follows from $d_0 = (H \cdot E_{r'}') > 0$, and the latter can be shown by the similar argument to Lemma \ref{lem(4-4-1)}. 
Let $\wGamma$ be the same as in Lemma \ref{rational curve} (3). 
Notice that the configuration of $\wS$ is as in Figure \ref{Gamma} (d). 
Meanwhile, there exists a unique fiber $\wF _0$ of $g$ such that $\wGamma \cap \wD _0 \cap \wF _0 \not= \emptyset$ because $(\wGamma \cdot \wD _0) = 1$. 
Put $\Gamma := f_{\ast}(\wGamma ) \sim _{\bQ} D_0 + (n+1)F - \sum _{i=1}^r\alpha _iE_i$ and $F_0 := f_{\ast}(\wF _0)$. 
Then the following assertion holds: 
\begin{lem}\label{lem(4-5-2)}
With the same notations and the assumptions as above, we obtain $H \in \Ampc (S)$. 
\end{lem}
\begin{proof}
Suppose that $\frac{b_1}{\alpha _1} = \frac{b_{r'}}{\alpha _{r'}}$. 
Then we know $(n+1) \cdot \frac{b_{r'}}{\alpha _{r'}} = \sum _{i=1}^{r'}b_i + (n+1-\alpha ') \cdot \frac{b_{r'}}{\alpha _{r'}}$ because of $\frac{b_i}{\alpha _i} = \frac{b_{r'}}{\alpha _{r'}}$ for $i=1,\dots ,r'$. 
Letting $\varepsilon$ be a positive rational number satisfying $\varepsilon < \min \left\{ d_0,\ \frac{d}{n+1}\right\}$, 
we take the effective $\bQ$-divisor:
\begin{align*}
D := \left( -\frac{b_{r'}}{\alpha _{r'}} + \varepsilon \right) \Gamma + (d_0-\varepsilon )D_0 + \left\{ d -(n+1)\varepsilon \right\} F_0 + \sum _{i=1} ^r\alpha _i\left( \frac{b_i}{\alpha _i} - \frac{b_{r'}}{\alpha _{r'}} + \varepsilon \right) E_i 
\end{align*}
on $S$. 
Then $D \sim _{\bQ} H$. 
Moreover, by using Lemma \ref{lem(2-3)} (3) we know: 
\begin{align*}
S \backslash \Supp (D) \simeq \wS \backslash \Supp \left( \wGamma + \wF _0 + \wD _0 + \sum _{i=1}^r (\wF _i -\wE _i') \right) 
\simeq \bA ^1_{\bk} \times \bA ^1_{\ast ,\bk}. 
\end{align*}
Thus, $H \in \Ampc (S)$.

In what follows, we can assume that $\frac{b_1}{\alpha _1} < \frac{b_{r'}}{\alpha _{r'}}$. 
Put $r'' := \max \left\{ i \in \{ 1,\dots ,r'-1\}\,|\, \frac{b_i}{\alpha _i}<\frac{b_{r'}}{\alpha _{r'}}\right\}$ and $\alpha '' := \sum _{i=1}^{r''}\alpha _i$. 
Then we know $(n+1) \cdot \frac{b_{r'}}{\alpha _{r'}} + \sum _{i=1}^{r''}\alpha _i \left( \frac{b_i}{\alpha _i} - \frac{b_{r'}}{\alpha _{r'}} \right) = \sum _{i=1}^{r'}b_i + (n+1-\alpha ') \cdot \frac{b_{r'}}{\alpha _{r'}}$ because of $\frac{b_i}{\alpha _i} = \frac{b_{r'}}{\alpha _{r'}}$ for $i=r''+1,\dots ,r'$. 
Note $n+1-\alpha '' \ge 1$ by virtue of $n+1 \ge \alpha -1 \ge \alpha ' \ge \alpha ''+1$. 
Letting $\varepsilon$ be a positive rational number satisfying $\varepsilon < \min \left\{ \frac{b_{r'}}{\alpha _{r'}} - \frac{b_1}{\alpha _1},\ d_0,\ \frac{d}{n+1-\alpha ''}\right\}$, 
we take the effective $\bQ$-divisor:
\begin{align*}
D := &\left( -\frac{b_{r'}}{\alpha _{r'}} + \varepsilon \right) \Gamma + (d_0 - \varepsilon )D_0 + \left\{ d -(n+1-\alpha '')\varepsilon \right\} F_0 \\
&\quad+ \sum _{i=1} ^{r''}\alpha _i\left( \frac{b_{r'}}{\alpha _{r'}} - \frac{b_i}{\alpha _i} - \varepsilon \right) E_i'
+ \sum _{j=r''+1} ^r\alpha _j\left( \frac{b_j}{\alpha _j} - \frac{b_{r'}}{\alpha _{r'}} + \varepsilon \right) E_j
\end{align*}
on $S$. 
Then $D \sim _{\bQ} H$. 
Moreover, by using Lemma \ref{lem(2-3)} (3) we know: 
\begin{align*}
S \backslash \Supp (D) \simeq \wS \backslash \left( \wGamma + \wD _0 + \wF _0 + \sum _{i=1}^{r''} (\wF _i -\wE _i) + \sum _{j=r''+1}^r (\wF _j - \wE _j') \right) \simeq \bA ^1_{\bk} \times \bA ^1_{\ast ,\bk}. 
\end{align*}
Thus, $H \in \Ampc (S)$. 
\end{proof}
\begin{rem}
Lemma \ref{lem(4-5-2)} presents the proof of Theorem \ref{CPW17} (2) (except for the case $d=9$) in a different way from {\cite{CPW17}}. 
\end{rem}
By the arguments in \S \S \ref{4-2}, \S \S \ref{4-3}, \S \S \ref{4-4} and \S \S \ref{4-5}, the proof of Lemma \ref{lem(4-2)} is thus completed. 
\section{Proof of Theorem \ref{main(1)}}\label{5}
In this section, we prove Theorem \ref{main(1)}. 
Let $S$ be a Du Val del Pezzo surface of degree $d \ge 3$ such that $\Sing (S) \not= \emptyset$. 
The proof of this theorem follows from Proposition \ref{prop(3)} and Theorem \ref{thm(4)} except for only one case. 
Hence, we treat this case in the following lemma: 
\begin{lem}\label{lem(5-1)}
Assume that $d=3$ and $S$ is of type $4A_1$. 
Then $\Ampc (S) = \Amp (S)$. 
\end{lem}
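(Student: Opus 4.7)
The lemma fills in the single case excluded from Proposition \ref{prop(3)}, so Theorem \ref{thm(4)} does not apply directly; my plan is to adapt the machinery of Sections \ref{3}--\ref{4} by hand. First I would realise $\wS$ explicitly: by Lemma \ref{lem(2-1)} fix a birational morphism $h : \wS \to \bP ^2_{\bk}$, and let $e_0, e_1, \dots, e_6$ be the standard basis (a general line together with the six exceptional curves). For a cubic of type $4A_1$ one can arrange the blown-up points so that the four $(-2)$-curves $\wE _1, \wE _2, \wE _3, \wE _4$ are, up to relabelling, the classes $e_0 - e_1 - e_2 - e_3$, $e_0 - e_1 - e_4 - e_5$, $e_0 - e_2 - e_4 - e_6$ and $e_0 - e_3 - e_5 - e_6$; a direct intersection computation confirms they are pairwise disjoint, and Lemma \ref{lem(2-2)} then produces the nine lines of $\wS$ as explicit $(-1)$-curves.

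Second, I would construct a $\bP ^1$-fibration $g : \wS \to \bP ^1_{\bk}$ whose fibres absorb every $(-2)$-curve. The class $F := 2e_0 - e_1 - e_2 - e_5 - e_6$ satisfies $(F)^2 = 0$, $(F \cdot (-K_{\wS})) = 2$ and $(F \cdot \wE _i) = 0$ for $i = 1,2,3,4$, so $|F|$ defines such a fibration. The count $\sum (\sharp \wF _i - 1) = \rho (\wS ) - 2 = 5$, combined with the fact that the four pairwise disjoint $(-2)$-curves all lie in fibres, pins down the configuration of singular fibres completely. The resulting setup lies just outside the $(\ast _n)$ framework: no section of $g$ has self-intersection $\le -2$, so the chosen section $\wD _0$ is a $(-1)$-curve (not contracted by $f$), and two of the $(-2)$-curves must sit inside a single singular fibre.

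Third, since $\rho (S) = \rho (\wS) - 4 = 3$, I can take a basis of $\Cl (S)_{\bQ}$ consisting of $F$ together with the classes of two independent $(-1)$-curves $E, E'$ chosen from distinct singular fibres. For $H \in \Amp (S)$, writing $H \sim _{\bQ} aF + bE + cE'$, positivity of $(H \cdot {-})$ against the finitely many extremal curves (the lines on $S$ and the surviving singular-fibre components) gives explicit linear inequalities on $a, b, c$, playing the role of Lemmas \ref{lem(4-1-1)}, \ref{lem(4-2-1)} and \ref{lem(4-3-1)}. For each stratum of the ample cone I then exhibit an effective $\bQ$-divisor $D \sim _{\bQ} H$ on $S$ whose preimage in $\wS$ is supported on $\wD _0 \cup \wF \cup \wF _1 \cup \wF _2 \cup \wF _3$, possibly supplemented by one auxiliary line from Lemma \ref{rational curve}. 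Lemma \ref{lem(2-3)}\,(1) then identifies $S \backslash \Supp (D)$ with $\bA ^1_{\bk} \times (\bA ^1_{\bk} \backslash \{3 \text{ points}\})$, yielding an $H$-polar cylinder.

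The main obstacle lies in step three. In Sections \ref{4-2}--\ref{4-4} the section $\wD _0$ had $(\wD _0)^2 \le -2$ and was contracted by $f$, so its class disappeared from $\Cl (S)$ and could be used freely when assembling the effective divisor $D$; in the present case $\wD _0$ is only a $(-1)$-curve, its class survives in $\Cl (S)$, and this rigidity forces the coefficients of $D$ to be chosen more carefully, splitting the ample cone into several subregions according to the relative sizes of $a, b, c$. Fortunately the $S_4$-symmetry permuting the four nodes of $S$ cuts these subregions down to a short list of orbit representatives, each of which is then settled by the same elementary linear combinations used throughout Section \ref{4}.
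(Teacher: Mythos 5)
Your set-up (the blow-up model, the four disjoint $(-2)$-classes, and the conic pencil $F = 2e_0-e_1-e_2-e_5-e_6$ whose fibres absorb all four $(-2)$-curves, with singular fibres $\wR_1+\wR_4+2e_3$, $\wR_2+\wR_3+2e_4$ and $(e_0-e_1-e_6)+(e_0-e_2-e_5)$) is correct, but step three contains two genuine problems. First, the claimed basis of $\Cl (S)_{\bQ}$ is not a basis: $F$ and the surviving $(-1)$-components of fibres are all vertical and satisfy $F \sim _{\bQ} 2f_{\ast}(e_3) \sim _{\bQ} 2f_{\ast}(e_4) \sim _{\bQ} f_{\ast}(e_0-e_1-e_6)+f_{\ast}(e_0-e_2-e_5)$, so they span only a rank-$2$ subspace of the rank-$3$ group $\Cl (S)_{\bQ}$; any basis must contain the class of the $(-1)$-section $D_0 := f_{\ast}(\wD _0)$, so $H \sim _{\bQ} aF+bE+cE'$ does not represent a general ample class. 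Second, and more seriously, the main construction you propose (an effective $D\sim _{\bQ}H$ supported on the image of $\wD _0\cup \wF \cup \wF _1\cup \wF _2\cup \wF _3$, with the cylinder coming from Lemma \ref{lem(2-3)} (1)) provably fails on part of the ample cone, already at $H=-K_S$: taking $\wD _0 = e_1$, one checks $f^{\ast}\bigl(2f_{\ast}(e_1)+f_{\ast}(e_0-e_1-e_6)\bigr) = -K_{\wS}$, so $-K_S \sim _{\bQ} 2D_0 + f_{\ast}(e_0-e_1-e_6)$, and since $-K_S-2D_0$ lies on the boundary ray of the cone generated by the images of the fibre components, no effective $\bQ$-divisor representing $-K_S$ can contain a general fibre, nor $f_{\ast}(e_3)$, $f_{\ast}(e_4)$ or $f_{\ast}(e_0-e_2-e_5)$, in its support (the same happens for every choice of $(-1)$-section). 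Equivalently, the analogue of the key inequality of Lemma \ref{lem(4-1-1)} fails here: for your fibration the divisor $\wDelta ^{(1)}$ of Lemma \ref{linear system} equals $e_2+e_5-e_3-e_4$, which is not effective, and $(-K_S\cdot f_{\ast}(\wDelta ^{(1)}))=0$. So a whole region of $\Amp (S)$ containing $\bQ _{>0}[-K_S]$ must be treated by the ``auxiliary curve'' route, which is exactly the part you leave unproved; moreover you cannot simply quote Lemma \ref{rational curve} or the computations of Section \ref{4}, since your fibration has $n=1$, $(r,s,t)=(1,0,2)$, $\gamma =4$ and $(-K_{\wS})^2=3<5-n$, so the standing hypothesis of Theorem \ref{thm(4)} and the numerical hypotheses of Lemmas \ref{linear system}, \ref{rational curve} and \ref{lem(4-1)} all fail.

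For comparison, the paper sidesteps this dichotomy by choosing the pencil of lines $g=\Phi _{|e_0-e_3|}$ instead: there two of the $(-2)$-curves become sections, $\Cl (S)_{\bQ}=\bigoplus _{i=1}^3\bQ [E_i]$, the three inequalities $a_1+a_3>0$, $a_2+a_3>0$, $a_3<0$ follow from the two explicit $(-1)$-curves $\wC _1,\wC _2$, and a single explicit effective $D$ works for every ample $H$, the cylinder being produced by Lemma \ref{lem(2-3)} (2) (two sections, two partial fibres and one full fibre in the boundary), with no case division. If you wish to keep your conic fibration you must actually construct, for the problematic region of the ample cone, suitable non-vertical auxiliary curves and the corresponding effective decompositions and verify the resulting complements are cylinders; as written, this central step is asserted rather than proved, so the argument has a genuine gap.
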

\begin{proof}
Let $f:\wS \to S$ be the minimal resolution. 
By {\cite[\S \S 9.2]{Dol12}}, there exists a blowing-up $h:\wS \to \bP ^2_{\bk}$ at six points $x_1,\dots ,x_6$ on the projective plane $\bP ^2_{\bk}$, which are intersection points of four lines in a general linear position on $\bP ^2_{\bk}$. 
By symmetry, we can assume that these four lines are $L_{1,2,6}$, $L_{1,3,4}$, $L_{2,3,5}$ and $L_{4,5,6}$ on $\bP ^2_{\bk}$, where $L_{i_1,i_2,i_3}$ means the line passing through $x_{i_1}$, $x_{i_2}$ and $x_{i_3}$. 
Let $e_0$ be a total transform of a general line on $\bP ^2_{\bk}$ by $h$ and let $e_1,\dots ,e_6$ be irreducible components of the exceptional divisor of $h$ such that $e_i = h^{-1}(x_i)$ for $i=1,\dots ,6$. 
By construction of $h$, $\wS$ has exactly four $(-2)$-curves $\wD _0$, $\wD _1$, $\wD _2$ and $\wD _{\infty}$ such that: 
\begin{align*}
\wD _0 \sim e_0 - e_4 - e_5 - e_6,\ \wD _1 \sim e_0 - e_1 - e_3 - e_4,\\
\wD _2 \sim e_0 - e_2 - e_3 - e_5,\ \wD _{\infty} \sim e_0 - e_1 - e_2 - e_6. 
\end{align*}
Moreover, by Lemma \ref{lem(2-2)} there exist $(-1)$-curves $\wE _1$, $\wE _2$, $\wE _3$, $\wE _1'$, $\wE _2'$, $\wE _3'$, $\wC _1$ and $\wC _2$ on $\wS$ such that: 
\begin{align*}
\wE _1 \sim e_1,\ \wE _2 \sim e_2,\ \wE _3 \sim e_0 - e_3 - e_6,\ \wE _1' \sim e_4,\ \wE _2' \sim e_5,\ \wE _3' \sim e_6,\\ \wC _1 \sim e_0 -e_1 - e_5,\ \wC _2 \sim e_0 -e_2 - e_4. 
\end{align*}
Then the divisor $e_0-e_3$ defines a $\bP ^1$-fibration $g := \Phi _{|e_0-e_3|}:\wS \to \bP ^1_{\bk}$. 
Notice that $g$ has exactly three singular fibers $\wF _1 := \wE _1' + \wD _1 + \wE _1$, $\wF _2 := \wE _2' + \wD _2 + \wE _2$ and $\wF _3 := \wE _3' + \wE _3$; moreover, $\wD _0$ and $\wD _{\infty}$ are sections of $g$ (see also Figure \ref{fig(5-1)}). 
Let $\wF$ be a general fiber of $g$. 
Put $F := f_{\ast}(\wF )$ and $E_i := f_{\ast}(\wE _i)$ for $i=1,2,3$. 
Since $\wD _{\infty} \sim \wD _0 + 2\wF - \sum _{i=1}^2(\wD _i + 2\wE _i)$ and $f_{\ast}(\wD _{\infty}) \sim  _{\bQ} 0$, we have $F \sim _{\bQ} E_1 + E_2$. 
Hence, $\Cl (S) _{\bQ} = \bigoplus _{i=1}^3\bQ [E_i]$ because of $\Cl (\wS )_{\bQ} = \bigoplus _{i=0}^6 \bQ [e_i] = \bQ [\wD _0] \oplus \bQ [\wF ] \oplus \left( \bigoplus _{i=1}^2 \bQ [\wD _i] \oplus \bQ [\wE _i] \right) \oplus \bQ [\wE _3]$. 

Let $H \in \Amp (S)$. 
Since $\Amp (S) \subseteq \Cl (S)_{\bQ} = \bigoplus _{i=1}^3\bQ [E_i]$, we can write: 
\begin{align*}
H \sim _{\bQ} \sum _{i=1}^3a_iE_i
\end{align*}
for some rational numbers $a_1,a_2,a_3$. 
For $i=1,2$, since $f_{\ast}(\wC _i) \sim _{\bQ} 2F - 2E_i -E_3 \not\sim _{\bQ} 0$, we obtain $a_i + a_3 = (H \cdot f_{\ast}(\wC _i)) > 0$. 
Meanwhile, we know $a_3 = -(H \cdot E_3) < 0. $
Letting $\varepsilon$ be a positive rational number satisfying $\varepsilon < \min \{ a_1+a_3,\ a_2+a_3\}$, 
we take the effective $\bQ$-divisor:
\begin{align*}
D := (a_1+a_3 -\varepsilon )E_1 + (a_2+a_3 -\varepsilon )E_2 + \varepsilon E_3 + (-a_3 +\varepsilon )E_3'
\end{align*}
on $S$. 
Then $D \sim _{\bQ} H$. 
Moreover, by using Lemma \ref{lem(2-3)} (2) we know: 
\begin{align*}
S \backslash \Supp (D) &\simeq \wS \backslash \Supp \left( \wD _0 + \wD _{\infty} + \sum _{i=1}^2(\wF _i - \wE _i') + \wF _3 \right) \simeq \bA ^1_{\bk} \times \bA ^1_{\ast ,\bk}. 
\end{align*}
Thus, $H \in \Ampc (S)$; namely, we obtain $\Ampc (S) = \Amp (S)$. 
\end{proof}
\begin{figure}[t]
\begin{center}\scalebox{0.5}{\begin{tikzpicture}
\draw [very thick] (-1,0.5)--(8,0.5);
\draw [very thick] (-1,4.5)--(4,4.5);
\draw [very thick] (6,1.5)--(8,1.5);
\draw [very thick] (4,4.5) .. controls (5,4.5) and (5,1.5) .. (6,1.5);
\node at (-1.4,0.5) {\Large $\wD _0$};
\node at (-1.4,4.5) {\Large $\wD _{\infty}$};

\draw [dashed] (1,0)--(0,2);
\draw [very thick] (0.2,1.25)--(0.2,3.75);
\draw [dashed,thick] (0,3)--(1,5);
\node at (1,-0.4) {\Large $\wE _1'$};
\node at (0.6,2.5) {\Large $\wD _1$};
\node at (1,5.4) {\Large $\wE _1$};

\draw [dashed] (4,0)--(3,2);
\draw [very thick] (3.2,1.25)--(3.2,3.75);
\draw [dashed,thick] (3,3)--(4,5);
\node at (4,-0.4) {\Large $\wE _2'$};
\node at (3.6,2.5) {\Large $\wD _2$};
\node at (4,5.4) {\Large $\wE _2$};

\draw [dashed,thick] (6,0)--(7,3);
\draw [dashed,thick] (7,2)--(6,5);
\node at (6,-0.4) {\Large $\wE _3'$};
\node at (6,5.4) {\Large $\wE _3$};
\end{tikzpicture}}\end{center}
\caption{Configuration of $\wS$ with $(d,{\rm Dyn}(S))=(3,4A_1)$}\label{fig(5-1)}
\end{figure}
Thus, we can show Theorem \ref{main(1)} as follows: 
\begin{proof}[Proof of Theorem \ref{main(1)}]
If $\rho (S)=1$, then we know $\Ampc (S) = \Amp (S) = \bQ _{>0}[-K_S]$ by Theorem \ref{CPW16} (1). 
Hence, we assume $\rho (S)>1$ in what follows. 
Meanwhile, if $(d, {\rm Dyn} (S)) = (3,4A_1)$, then we obtain $\Ampc (S) = \Amp (S)$ by Lemma \ref{lem(5-1)}. 
From now on, we thus assume that $(d, {\rm Dyn} (S)) \not= (3,4A_1)$. 
Let $f:\wS \to S$ be the minimal resolution. 
By Proposition \ref{prop(3)}, there exists a non-negative integer $n$ with $5-d \le n \le 2$ such that $\wS$ satisfies $(\ast _n)$ (see the beginning of \S \ref{3}, for this definition). 
Notice that all irreducible curves with self-intersection number $\le -2$ are contracted by $f$. 
Therefore, we know $\Ampc (S) = \Amp (S)$ by Theorem \ref{thm(4)}. 
The proof of Theorem \ref{main(1)} is thus completed. 
\end{proof}

\end{document}